\documentclass{amsart}
\usepackage{amssymb}
\usepackage{latexsym}
\usepackage{euscript}

\usepackage{graphicx}
\usepackage{lmodern}
\usepackage{pifont}
%%%%%%%%%%%%%%%%%%%%%%%%%%%%%%%%%%%%%%

\usepackage[normalem]{ulem}

\usepackage{amsmath,amscd,amssymb,amsthm,amsfonts,color}
\usepackage{latexsym}%Seppo added
\usepackage{euscript}%Seppo added

\usepackage[english]{babel}
\usepackage[T1]{fontenc}
\usepackage[utf8]{inputenc}
\usepackage{xspace}
\usepackage{xcolor}
%\usepackage[export]{adjustbox}
%\usepackage{cite}%removed, originally by Yosra

%%%%%%%%%%%% to cite theorems _________________________

%\newcommand{\cref}{\ref}%Seppo added to rename \cref to \ref

% capitalized

%\usepackage{bbm}%removed, originally by Yosra
\usepackage{lmodern}
\usepackage{amsbsy}
\usepackage{url}
\usepackage{graphicx}
\usepackage{enumerate}

% Unified numbering for theorem-like environments
\newtheorem{theorem}{Theorem}[section]
\newtheorem{proposition}[theorem]{Proposition}
\newtheorem{corollary}[theorem]{Corollary}
\newtheorem{lemma}[theorem]{Lemma}

\newtheorem{remark}[theorem]{Remark}
\newtheorem{definition}[theorem]{Definition}

% Load hyperref before cleveref

\usepackage[colorlinks=true,linkcolor=blue,citecolor=green]{hyperref}
\usepackage[capitalise]{cleveref}

%%%%%%% hilbertspace

\newcommand{\la}{\langle}
\newcommand{\ra}{\rangle}

%%%%%%%

%COULEUR

%%%%%%%%%%%%%%%

\newcommand{\n}{\|}
\newcommand{\ifaf}{\Leftrightarrow}
\newcommand{\vide}{\emptyset}

\newcommand{\h}{\mathfrak{H}}
\newcommand{\kk}{\mathfrak{K}}
\newcommand{\lohk}{LO(\h,\kk)}
\newcommand{\loh}{LO(\h)}
\newcommand{\cohk}{CO(\h,\kk)}
\newcommand{\coh}{CO(\h)}

\newcommand{\mul}{\mathrm{mul} \, }

\newcommand{\kerr}{\mathrm{ker}\,}
\newcommand{\kerT}{\mathrm{ker}  \, T}

\newcommand{\ran}{\mathrm{ran}\,}
\newcommand{\ranT}{\mathrm{ran}  \, T }

\newcommand{\ranbar}{\overline{\mathrm{ran}}\,}
\newcommand{\ranbarT}{\mathrm{\overline{ran}}\, T}
\newcommand{\ranbarA}{\mathrm{\overline{ran}}\, A}

%____ set of bounded operators ___

\newcommand{\bhk}{B(\h,\kk)}

\newcommand{\bh}{B(\h)}
\newcommand{\bk}{B(\kk)}
%__________________________________

\newcommand{\dombar}{\overline{\mathrm{dom}}\,}%Seppo added this definition
\newcommand{\D}{\mathrm{dom}\,}
\newcommand{\DT}{\mathrm{dom}\,T }
\newcommand{\DB}{\mathrm{dom}\,B }

\newcommand{\DA}{\mathrm{dom}  \,A }

\newcommand{\adj}{^*}
\newcommand{\inv}{^{-1}}

\newcommand{\ts}{T_s}
\newcommand{\clo}{^{**}}
\newcommand{\tstar}{T^{*}}

\newcommand{\half}{^{\frac{1}{2}}}
\newcommand{\mhalf}{^{-\frac{1}{2}}}

\newcommand{\halfltsb}{(\lambda  T^*B)^{\frac{1}{2}}}
\newcommand{\halftsb}{( T^*B)^{\frac{1}{2}}}
\newcommand{\halfbt}{( B^* T)^{\frac{1}{2}}}

\newcommand{\halfltsbs}{ (\lambda \,  \tstar B)^{\half}_s }
\newcommand{\halftsbs}{ ( \tstar B)^{\half}_s }

\newcommand{\orth}{^\perp}

\newcommand{\N}{\mathbb{N}}%Seppo added
\newcommand{\C}{\mathbb{C}}

\newcommand{\R}{\mathbb{R}}
\newcommand{\rplus}{\mathbb{R^+}}

\newcommand{\ppa}{P_{\ker( T^*B)^{\perp}}}

\newcommand{\inclu}{\subseteq}
\newcommand{\converge}{\underset{n \to + \infty} \longrightarrow}

%%%%%

%\newcommand{\splocal}{\widetilde{\sigma_T}}

% spectre de T et resolvent set de T

\newcommand{\lplusk}{B^{+}(\kk)}
\newcommand{\lplus}{B^{+}(\h)}
\newcommand{\gplus}{GL^{+}(\h)}

\newcommand{\mathscr}{\mathbf}%Seppo renamed \mathscr to \mathbf

\newcommand{\ldeux}{\mathcal{L}_l^{+2}(\h)}

%\newcommand{\lldeux}{L_l^{+2}(\h)}%Seppo changed  \mathscr{L}_r^{+2}(\h)

%------------------------------------- ADDED FOR PART 1

    \newcommand{\lldeux}{\mathcal{L}_{{l}^{\tiny-}}^{+2}(\h)}
    
     %\newcommand{\llLdeux}{\mathcal{L}_{l,L}^{+2}(\h)}

         %------------------------------------   ----------------------

\newcommand{\bardt}{\overline{\D}T}

\newcommand{\rest}{\upharpoonright}

\newcommand{\bc}{\begin{center}}
\newcommand{\ec}{\end{center}}

%%%%%%%%%%% ___square of the proof______ %%%%%%%%%%%%%%%%%%%%%%%%%ù

%%%%%%%%%%%%%%%%%%%%%%%%%%%%%%%%%%%%%%%%%%%%%%%%%%%%%%%%ù

%--------------------------------------------------------

\numberwithin{equation}{section}
%%%%%%%%%%%%%%%%%%%%%%%%%%%%%%%%%%%%%%%%%%%%%
%
%%%%%%%%%%%%%%%%%%%%%%%%%%%%%%%%%%%%%%%%%%%%%

%%%%%%%%%%%%%%%%%%%%%%%%%%%%%%%%%%%%%%%%%%%%%%%%%%%%%%%%%%%%%%%%%%%%%%%
%%%%%%%%%%%%%%%%%%%%%%%%%%%%%%%%%%%%%%%%%%%%%%%%%%%%%%%%%%%%%%%%%%%%%%
%(proof en gras)
%
\makeatletter
\renewenvironment{proof}[1][\proofname]{%
   \par\pushQED{\qed}\normalfont%
   \topsep6\p@\@plus6\p@\relax
   \trivlist\item[\hskip\labelsep\bfseries#1\@addpunct{.}]%
   \ignorespaces
}{%
   \popQED\endtrivlist\@endpefalse
}
\makeatother
%%%%%%%%%%%%%%%%%%%%%%%%%%%%%%%%%%%%%%%%%%%%%%%%%

\renewcommand{\labelenumi}{(\arabic{enumi})}

\begin{document}

\title{Product of  nonnegative selfadjoint operators \\ in unbounded settings}

%
%\title[Extension of the theorems of Douglas and Sebestyén]
%    {An extension and refinement of the \\ theorems of Douglas and Sebestyén\\ for unbounded operators}

\author{Yosra~Barkaoui}
\address{Department of Mathematics and Statistics \\
University of Vaasa \\
P.O. Box 700, 65101 Vaasa \\
Finland}
\email{yosra.barkaoui@uwasa.fi \\ yosrabarkaoui@gmail.com}

\author{Seppo~Hassi}

\address{Department of Mathematics and Statistics \\
University of Vaasa \\
P.O. Box 700, 65101 Vaasa \\
Finland}
\email{sha@uwasa.fi}

\keywords{Nonnegative operator,  operator inequalities,
factorization of operators, quasi-affinity, quasi-similarity, linear relations}

\subjclass{47A62,  47B02, 47B25,47A06}

 %%%%%%%%%%%%%%%%%%%%%%%%%%%%%%%%%%%%%%%%%%%%%ù

\maketitle

\textbf{\abstractname}
In this paper, necessary and sufficient conditions are established  for the factorization of a closed, in general, unbounded  operator  $ T=AB$ into a product of two nonnegative selfadjoint operators $A$ and $B.$ Already the special case, where $A$ or $B$ is bounded, leads to new results and is of wider interest,  since the problem is connected to the notion of similarity of the operator $T$ to a selfadjoint one, but, in fact, goes beyond this case.    It is proved that this subclass of operators can be characterized not only by means of quasi-affinity of $\tstar$  to an operator $S=S\adj \geq 0$, but also via Sebestyén inequality, a result known in the   setting of bounded operators $T.$ Another subclass of operators $T,$ where $A$ or $B$ has a bounded inverse, leads to a similar analysis. This gives rise to a   reversed version of   Sebestyén  inequality which is introduced in the present paper.    It is   shown that this second subclass, where $A\inv$ or $B\inv$ is bounded, can be characterized in a similar way by means of quasi-affinity of $T,$ rather that $\tstar,$  to an operator $S=S\adj \geq 0$.  Furthermore,  the connection between these two classes and   weak-similarity as well as quasi-similarity  to  some $S =S\adj \geq 0$ is investigated. Finally, the special case where $ S$ is bounded is considered.\\ \\

%_____________________________Section  1 ___________________

%%%%%%%%%%%%%%%%%%%%%%%%%%%%%%%%%%%%%%%%%%%%%%%%%%%%%%%%%%%%%%%%%
\section{Introduction}
%%%%%%%%%%%%%%%%%%%%%%%%%%%%%%%%%%%%%%%%%%%%%%%%%%%%%%%%%%%%%%%ù
\hspace*{0.3cm} In $2021$   M. Contino, M. A. Dritschel, A. Maestripieri, and S. Marcantognini \cite{product2021} (see also \cite{gustavo2013products}) showed  that similarity to a bounded positive operator is no longer sufficient to characterize the product of two positive bounded operators in the settings of infinite-dimensional complex Hilbert space, contrary to that of finite-dimension; see   \cite{positiveprodmatrices}. More precisely, for a bounded operator $T \in B(\h)$ they established the following characterization for similarity:
 %__----------------------___
 \begin{align} \label{introo1}
\begin{split}
   T \text{ is similar to a}  & \text{ positive operator } \\
    & \Updownarrow \\
       T= AB \text{ with }  A, B \in \lplus & \text{ and, in addition, } A   \text{ or $B$ is invertible},
\end{split}
\end{align}
%__---------------------
where $\lplus$ stands for the set of all bounded nonnegative operators on $\h;$ see \cite[Theorem 3.1]{product2021}. This result remains   true for unbounded operators $T;$ cf. Proposition \ref{theosimilarscalar}.
Even weaker conditions than similarity, such as quasi-similarity and quasi-affinity have also proven to be  insufficient  to fully characterize such a product. Instead, the product representation $T=AB,$ $ A,B \in \lplus$ was characterized by means of  Sebestyén inequality
\cite{sebestyen1983restrictions}
 as follows:
 %__----------------------___
 \begin{equation} \label{introo2}
T= A B \qquad \ifaf \qquad   T \tstar \leq X \tstar \text{ for some } X \in \lplus;
\end{equation}
%--------------------------------
see \cite[Theorem 4.5]{product2021}.
Hence, a natural approach to improve the above results is either to pursue weaker concepts than quasi-affinity or to relax certain conditions on $T.$ \\
 \hspace*{0.4cm} One of the main purposes in the present paper is to investigate these questions and to extend the above results to the setting of unbounded operators $T.$ More precisely, a complete study is first carried out when a closed operator $T$ belongs to the following class of operators:
\begin{eqnarray*}
     \ldeux=  \Big\{T=AB; \ A \in \lplus \text{ and } B = B\adj \geq 0   \Big\}, 
\end{eqnarray*}
where $B$ is in general unbounded. It will be seen in \cref{section2}
            that every element of   $\ldeux$ satisfies an equality analogous to the one appearing in    $\eqref{introo2}.$ More generally,  for closed operators  $T$ and $B$ such that $\tstar B$ is selfadjoint,    Sebestyén theorem \cite{sebestyen1983restrictions} is generalized to the unbounded context as follows:
             %__----------------------___
                 \begin{equation} \label{introo2bt}
                 X \overline{B_0} \inclu T \text{ for some } X \in \lplus \  \ifaf \  T^*T \leq \lambda\, T^*B,
                \end{equation}
%--------------------------------
               for the restriction  $B_0:=B \rest \D \tstar B$ of $B;$ cf. Theorem \ref{firsttheo01}. In the unbounded setting the restriction $B_0$ appears naturally, and, in fact, due to the equality
               $$
               \tstar B_0 = \tstar \overline{B_0}= \tstar B
               $$
               the equivalence in \eqref{introo2bt} can restated just with $B_0.$ Obviously, in the particular case where $\D \tstar B$ is a core for $B,$ i.e., $\overline{B_0}= B$, \eqref{introo2bt} is    instead stated for $B.$ This covers the bounded setting in which \eqref{introo2} is true for $B \in \lplus$ and the equivalence  \eqref{introo2bt} holds with equality  $T =XB.$  However, for the unbounded setting where $B \neq \overline{B_0},$ it is necessary to consider further conditions  including
               $B\adj T = \tstar B$  in order to state  \eqref{introo2bt} for $B;$ see Proposition \ref{sebestyenB}.

               The inclusion in \eqref{introo2bt} represents a good motivation for describing the connection between  the class $\ldeux$ and  the notion of quasi-affinity to a nonnegative selfadjoint operator. Recall from \cite[Definition 2.2]{unboundedaffinity} that  $T$ is said to be   \emph{quasi-affine}  to some operator  $S$ if there exists an injective $G \in B(\h)$ such that $\ranbar G =\h$ and the following inclusion holds:
               \begin{equation}\label{quasiintrodef}
                GT \inclu SG.
               \end{equation}
              In the bounded case,   treated in  \cite[Proposition 3.8]{product2021}, one can observe that the inclusion in  \eqref{quasiintrodef} is equivalent to
                \begin{equation}\label{snee}
                 S= \overline{GT G\inv}= (G^{-1})\adj \tstar G\adj.
                \end{equation}
                However, \eqref{snee} need not hold anymore  in the  unbounded setting and this motivates the investigation of a possible connection between quasi-affinity to $S=S\adj \geq 0$  and the existence  of nonnegative selfadjoint extensions of $G T G\inv,$ which in turn leads to the following characterization given in   Proposition  \ref{inclusionnfs}
               %________________________________
\begin{align}
T \supseteq  AB \in \ldeux & \text{  with } \ranbarA =\h
 \nonumber \\
& \Updownarrow \label{introm01}  \\
\tstar \text{ is quasi-affine to } & S = S\adj \geq 0 \nonumber.
\end{align}
%_____________________
Motivated by \eqref{introo2bt}, this induces the following new characterization of Sebestyén inequality by means of quasi-affinity to some $S=S\adj \geq 0:$
%_____________________
\begin{eqnarray}
&\tstar T \leq  \lambda \tstar B  \text{ with }   \DT \inclu \D B   \text{ for some }  \lambda \geq 0, \ B= B\adj =\overline{B_0}\geq 0
  \nonumber  \\
  %______________________
&\Updownarrow \label{introm02} \\
%______________________
& T =  A\, \overline{B_0} \in \ldeux \quad \text{with } \ranbarA = \h  \nonumber \\
%______________________
&\Updownarrow \nonumber \\
%______________________
&\tstar \text{ is } G\text{-quasi-affine to } S = S^* \geq 0  \text{ with }    \DT \inclu \D \overline{B_F \rest \D(\tstar B_F}) \nonumber\\
 & \hspace*{-6.8cm} \text{and }  B_F=G\inv S\half \,  \overline{S\half  {(G\inv)}\adj};\nonumber
\end{eqnarray}
see Theorem \ref{coralama}.\\
%____________________________
\hspace*{0.4cm} The present setting of unbounded operators leads to further generalisations of the equivalences  in  \eqref{introo2bt} and \eqref{introm01}. In particular, the next goal in this paper is to investigate the reversed    inequality
      %____________________________________
                 \begin{equation} \label{reverintro}
                \tstar T \geq  \eta AT,  \qquad    \eta >0,
                \end{equation}
       %____________________________________
       and prove analogs for the characterizations in \eqref{introo2bt} and \eqref{introm01}; see   Theorem \ref{corsebestyeninclusion} and Corollary \ref{inversesebestyen}. The idea to get further characterizations here is to make connection to the initial Sebestyén inequality \eqref{introo2bt} by taking inverses in the operator inequality \eqref{reverintro}. This has motivated   a further  generalisation of the above results  to the case of nondensely defined operators as well as multivalued linear operators (linear relations) in    Theorem \ref{relationtheo}. \\
%%________________________________
               \hspace*{0.4cm} For the reversed  inequality \eqref{reverintro},  quasi-affinity of $T$, rather than $\tstar$, to $S$ arises
               and leads to a new  class different from $\ldeux$ defined by
                %-------------------
                       \begin{align*}
                            \lldeux=
                             &  \{ T= BA , \ B\inv \in \lplus \text{ and }   A = A\adj \geq 0 \}.
                 \end{align*}
                 %---------------------------
                 In fact,
                 Theorem \ref{TBAsurjective} shows that:
%________________________________
                \begin{equation}\label{introm3}
                   T \inclu B A  \in \lldeux \ifaf T \text{ is  quasi-affine to some } S=S\adj \geq 0.
                \end{equation}
 %   ________________________________
 In particular, if $T$ is $G$-quasi-affine to $S$  such that $\rho(\overline{ G\adj S\half } S\half G T) \neq \vide$,  then
            %-----------------------------------------------
       %----------------------------------------------------------------------------
       $$ T^* T  \geq  \tfrac{1}{\lambda}    AT  $$
       for some $A= A\adj \geq 0,$ which emphasizes the strong connection between  the class $\lldeux$ and the reversed  inequality.\\
 
  \hspace*{0.4cm} It is clear from    \eqref{introm3} and \eqref{introm01} that there is no direct relation between $\ldeux$ and $\lldeux.$ However, if $T$ is quasi-similar to $S=S\adj \geq 0$ or, equivalently $T$ and $\tstar$ are quasi-affine to $S$ then one obtains
  $$
\ldeux \ni  T_1  \inclu T \inclu T_2 \in \lldeux.
  $$
%%%%%%
 In fact, behind this proof appears the notion of Friedrichs extension of a nonnegative (symmetric operator).
More importantly, when $\rho(T) \neq \vide$ the operators $T$ and $\tstar$ play a symmetric role  with respect to   stronger notions than quasi-similarity, namely   \emph{W}-similarity  and similarity. This can be seen in  Proposition \ref{theosimilarscalar} where the equivalence \eqref{introo1} remains valid even in the unbounded setting. In this case one  obtains the following equivalences:
 $$
T \text{ is $\mathcal{W}$-similar to }  S=S\adj \geq 0 \ifaf T\in \lldeux  \ifaf T\in \ldeux.
  $$
     The assumption $\rho(T) \neq \vide$ is  quite important also for the spectral properties of $T$ (see \cite{paper3}), in particular, if $T \in \ldeux$ such that $\rho(T) \neq \vide$ then
 $$
 \sigma(T ) \inclu \rplus.
 $$

The last part of this paper deals with a particular case, where $T$ is compared to a bounded nonnegative $S \in \lplus.$  Since  both  $\mathcal{W}$-similarity and similarity to such operators imply the boundedness of $T,$ it is enough to restrict attention to quasi-affinity and quasi-similarity notions.

%%%%%%%%%%%%%%%%%%%%%%%%%%%%%%%%%%%%%%%%%%%%%%%%%%%%%%%%%%%%%%%%%%

                           \section{The class $\ldeux$ and Sebestyén inequality} \label{section2}

%%%%%%%%%%%%%%%%%%%%%%%%%%%%%%%%%%%%%%%%%%%%%%%%%%%%%%%%%%%%%%%%%%

In this section the emphasis will be on  the following  subclass of the closed operators in $\coh :$
   %_________________________________________________
   \begin{eqnarray}
   % \nonumber % Remove numbering (before each equation)
     \ldeux=
            \Big\{T=AB \in \coh; \ A \in \lplus \text{ and } B = B\adj \geq 0   \Big\} \label{definitionldeux2},
            \end{eqnarray}
   %_________________________________________________
  where $B$ is in general a closed unbounded operator on  $\h.$
   %_________________________________________________
   %_________________________________________________
   Analogous  to the bounded case, this class is characterized through Sebestyén inequality now involving  unbounded operators. Further extensions are treated in \cref{secrelation}.\\
   %------------------------
\hspace*{0.4cm} In the sequel  $T\in \lohk$ stands for    a linear operator from $\h$ to a complex Hilbert space $\kk$ with   domain $\DT$ and range $\ranT$. In addition, one writes $T \in \cohk$ if $T$ is closed.  If $\kk=\h$ then $\coh:=\cohk$ and $\lohk=\loh.$ In this case, $T$ is said to be  \emph{symmetric} if $\la Tx , y\ra = \la x, Ty \ra $ for all $x,y \in \DT.$ If $\la Tx, x \ra \geq 0$ for all $x \in \DT$, then $T$ is \emph{nonnegative}. It is  \emph{selfadjoint} when $\bardt=\h  $ and $\tstar= T$.   Note that  if $T$ is nonnegative and selfadjoint, then it admits a unique  nonnegative selfadjoint square root which will be denoted by $ T\half $; cf. \cite{Sebestyen2017,wouk1966note}.  One writes $T \leq S$ for two  nonnegative selfadjoint operators $S$ and $T$ if
%'________________________________________________________'
     $$
    \D S\half \inclu \D T\half \qquad \text{and} \qquad
    \| T\half x\| \leq \|S\half x\| \ \text{for all } x \in \D S\half.
    $$
   %'________________________________________________________'
 The class of bounded operators from $\h$ to $\kk$ is denoted by $\bhk$ and  in case $\kk=\h$ this is appropriated to   $\bh.$ If $0 \leq T= \tstar   \in \bh$ then one writes $ T \in \lplus.$

    If T is closed, then its Moore-Penrose inverse   is denoted by  $T^{(-1)}$. It satisfies the following equalities:
    $$
    T T^{(-1)} = P_{\ker {\tstar}\orth} I \rest \ran T \qquad  T^{(-1)} T= P_{\ker T\orth} \rest \DT.
    $$
     The \emph{resolvent set} of $T \in CO(\h)$ is the set $\rho(T)$ of all $\mu \in   \C $ for which $ (T - \mu I)\inv \in B(\h)$.
     The \emph{spectrum} of $T$ is  defined by $\sigma(T)=\C \setminus \rho(T).$

 The next lemma provides a key ingredient for what follows. It treats both densely defined and nondensely defined operators, as well as linear relations; cf. \cref{secrelation}.  Note that its  proof is based on \cite[Lemma 2.9]{hassiderkach2009}, where the equality
\begin{equation}\label{tamer}
  (ST)\adj = \tstar S\adj
\end{equation}
is established in the general  case of linear relations.    
Recall that $(\ref{tamer})$ is satisfied  if $S \in B(\h)$ or $T$ is invertible.

\begin{lemma}\label{relationalpha}
 Let $X \in \lplusk$ and $R $ be a linear relation from  $\h$ to $\kk$, and let $\alpha \in [0,1]$. If $X R\clo$ is closed (closable), then $X^\alpha R\clo$ is closed (closable, respectively) and, moreover,
  \begin{equation}\label{equality}
    (R\adj {X^\alpha})\adj=X^\alpha R\clo.
  \end{equation}
  Analogously, if $\ker X=\{0\}$ and  $R\clo X\inv $ is closed (closable), then $R\clo X^{-\alpha} $ is closed (closable, respectively) and
  \begin{equation} \label{equalityn2}
    (X^{-\alpha} R\adj )\adj=  R\clo X^{-\alpha}.
  \end{equation}
\end{lemma}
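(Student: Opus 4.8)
The plan is to read off the two adjoint identities from the product rule \eqref{tamer} and then obtain the genuine closedness by a single multiplication trick with the bounded operator $X^{1-\alpha}$. For \eqref{equality}, note that $X^\alpha\in\lplusk$ is bounded and selfadjoint, so \eqref{tamer} applies with the bounded left factor $S=X^\alpha$ and $T=R\clo$, giving $(X^\alpha R\clo)\adj=(R\clo)\adj X^\alpha=R\adj X^\alpha$. Taking adjoints once more and using $A\clo=\overline A$ for relations yields $\overline{X^\alpha R\clo}=(R\adj X^\alpha)\adj$; hence \eqref{equality} follows the instant $X^\alpha R\clo$ is shown to be closed. For \eqref{equalityn2} the symmetric move works: since $\kerr X=\{0\}$, the relation $X^{-\alpha}$ is invertible (its inverse $X^\alpha$ is bounded), so \eqref{tamer} applies this time through the invertibility of $T=X^{-\alpha}$, giving $(R\clo X^{-\alpha})\adj=X^{-\alpha}R\adj$ and, after a biadjoint, $\overline{R\clo X^{-\alpha}}=(X^{-\alpha}R\adj)\adj$. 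So in both parts everything reduces to proving the asserted closedness.

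For the closedness in \eqref{equality} I would argue directly on graphs, using the spectral identities $\kerr X^\alpha=\kerr X$ and $\ranbar X^\alpha=(\kerr X)\orth$ valid for $\alpha\in(0,1]$. Take $(x_n,X^\alpha z_n)$ in the graph with $(x_n,z_n)\in R\clo$, $x_n\to x$ and $X^\alpha z_n\to y$. Applying the bounded operator $X^{1-\alpha}$ gives $Xz_n=X^{1-\alpha}X^\alpha z_n\to X^{1-\alpha}y$, so $(x_n,Xz_n)\in XR\clo$ converges; closedness of $XR\clo$ produces $w$ with $(x,w)\in R\clo$ and $Xw=X^{1-\alpha}y$, that is $X^{1-\alpha}(X^\alpha w-y)=0$. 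Then $X^\alpha w-y\in\kerr X^{1-\alpha}=\kerr X$, while $y=\lim X^\alpha z_n$ and $X^\alpha w$ both lie in $\ranbar X^\alpha=(\kerr X)\orth$; hence $y=X^\alpha w$ and $(x,y)\in X^\alpha R\clo$. The closable case is the same computation run on a null sequence $x_n\to0$, where the limit is forced into $\kerr X\cap(\kerr X)\orth=\{0\}$.

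For \eqref{equalityn2} the argument is dual. Take $(x_n,z_n)\in R\clo X^{-\alpha}$, so $(X^{-\alpha}x_n,z_n)\in R\clo$, with $x_n\to x$ and $z_n\to z$. Set $a_n:=X^{1-\alpha}x_n$; since $X^{-1}a_n=X^{-\alpha}x_n$, we have $(a_n,z_n)\in R\clo X^{-1}$ with $a_n\to X^{1-\alpha}x=:a$. Closedness of $R\clo X^{-1}$ gives $(a,z)\in R\clo X^{-1}$, i.e.\ $a\in\ran X$ and $(X^{-1}a,z)\in R\clo$. Injectivity of $X$, hence of $X^{1-\alpha}$, converts $X^{1-\alpha}x=a=X^{1-\alpha}X^\alpha(X^{-1}a)$ into $x=X^\alpha(X^{-1}a)\in\ran X^\alpha=\D X^{-\alpha}$ with $X^{-\alpha}x=X^{-1}a$; thus $(x,z)\in R\clo X^{-\alpha}$. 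The closable case and the endpoints $\alpha\in\{0,1\}$ (with the convention $X^0=P_{\ranbar X}$, which still satisfies $\kerr X^0=\kerr X$ and $\ranbar X^0=\ranbar X$) are routine.

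The main obstacle, and the genuine content beyond the adjoint bookkeeping, is that $X$ may have a nontrivial kernel in \eqref{equality} (respectively a non-closed range in \eqref{equalityn2}), so the factor $X^{1-\alpha}$ cannot simply be cancelled. The argument succeeds only because nonnegativity supplies $\kerr X^\alpha=\kerr X$ together with $\ranbar X^\alpha=(\kerr X)\orth$, which traps the residual vector simultaneously in $\kerr X$ and in its orthogonal complement, forcing it to vanish. Pinning down this interplay of kernel and range across the exponents $\alpha$, $1-\alpha$ and $1$ is where care is needed; once it is in place, \eqref{equality} and \eqref{equalityn2} drop out of the biadjoint identities recorded in the first paragraph.
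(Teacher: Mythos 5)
Your proof is correct, and its engine is the same as the paper's: multiply by the bounded factor $X^{1-\alpha}$ to pass to the closed relation $XR\clo$, then recover the original limit; the adjoint formulas then follow in both write-ups by computing $(X^\alpha R\clo)^*=R^*X^\alpha$ from \eqref{tamer} and taking a biadjoint. Two points of comparison are worth recording. First, for \eqref{equality} you are more explicit than the paper at the decisive step: the paper's chain $y\in X^{\alpha-1}(X^{1-\alpha}y)=X^{\alpha-1}(XR\clo x)=X^{\alpha}R\clo x$ really only yields $y\in X^{\alpha}R\clo x+\kerr X$, because $X^{\alpha-1}X^{1-\alpha}$ contributes the multivalued part $\kerr X^{1-\alpha}=\kerr X$; to cancel the residual one must use, as you do, that $y$ and the elements of $X^{\alpha}R\clo x$ lie in $\ranbar X^{\alpha}=(\kerr X)\orth$, so the difference sits in $\kerr X\cap(\kerr X)\orth=\{0\}$. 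Your proposal supplies exactly the orthogonality argument the paper leaves implicit. Second, for \eqref{equalityn2} the paper does not rerun the graph argument: it notes that $(R\clo X\inv)\inv=X(R\inv)\clo$ is closed, applies the first part to $R\inv$, and takes inverses and adjoints again; your direct argument on $R\clo X^{-\alpha}$ via $a_n=X^{1-\alpha}x_n$ is a self-contained equivalent that trades the inversion trick for one more pass through the graph. Your treatment of closability by restricting to null sequences is at the same level of detail as the paper's own remark, so nothing further is required there.
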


\begin{proof}
  Let $(x_n,y_n)   \in  X^\alpha R\clo$ be such that $(x_n,y_n) \converge (x,y) \in \h \times \kk.$ Then, $y_n \in X^{\alpha - 1} XR\clo x_n,$ and therefore
  $$
 X^{1- \alpha}y_n \in X^{1- \alpha}X^{\alpha - 1} XR\clo x_n \inclu XR\clo x_n;
  $$
  here $X^{\alpha - 1}$ denotes a linear relation inverse of $X^{1- \alpha}.$
  Since $X^{1- \alpha} \in \lplusk,$ one has  $X^{1- \alpha}y_n \converge X^{1- \alpha}y$  and   $(x_n,X^{1- \alpha}y_n) \converge (x, X^{1- \alpha}y).$   As $(x_n,X^{1- \alpha}y_n) \in G(XR\clo)$ and   $XR\clo$ is closed, one  concludes that  $(x, X^{1- \alpha}y) \in G(XR\clo).$   On the other hand, $ y \in X^{ \alpha-1} X^{1- \alpha}y,$ which implies that
  $$
  y \in X^{ \alpha-1} (X^{1- \alpha}y)= X^{ \alpha-1} ( XR\clo x) = X^\alpha R\clo x.
  $$
  Consequently,   $X^{ \alpha}R\clo$ is closed.
To prove $(\ref{equality}),$ it suffices to observe  that
   $$
        X^\alpha R\clo=  \left[ \left(X^\alpha R\clo\right)^* \right]^* = \left[ R\adj  (X^\alpha)\adj \right]\adj.
        $$
        \   \hspace*{0.4cm} If $\ker X=\{0\}$ and  $R\clo X\inv $ is closed, then $(R\clo X\inv)\inv= X {R\inv}\clo $ is closed.  Thus, $(\ref{equalityn2})$ follows immediately by applying $(\ref{equality})$ to   $R\inv$ and by taking the inverse. For the closability, it suffices to consider the case where  $(x_n,y_n) \converge (0,y).$
            \end{proof}

%______________________________________________________________
%_______________________________________________________________

     \begin{corollary}
        If $T=AB\in \ldeux$ and $T^{2^n}$  is closed for every $n\in \N$, then
        \begin{equation}\label{eq1induction}
          T^{2^n} = A S_n\in \ldeux \quad \text{for all } n \in \N,
        \end{equation}
        where $(S_n)_{n \in \N}$  is a sequence of nonnegative selfadjoint unbounded operators such that
        $S_0 =B$ and
         $S_{n}= S_{n-1} A  S_{n-1}$ for all   $n \in \N\adj$.
      \end{corollary}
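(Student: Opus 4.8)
The plan is to prove the two assertions ($T^{2^n}=AS_n$ and $S_n=S_n\adj\ge 0$) simultaneously by induction on $n$, the crux being the selfadjointness of the product $S_n=S_{n-1}AS_{n-1}$. The base case $n=0$ is nothing but the hypothesis $T=T^{2^0}=AB=AS_0$ with $S_0=B=B\adj\ge 0$. For the inductive step I would assume $T^{2^{n-1}}=AS_{n-1}\in\ldeux$ with $S_{n-1}=S_{n-1}\adj\ge 0$ and compute formally $T^{2^n}=(T^{2^{n-1}})^2=(AS_{n-1})(AS_{n-1})=A\,(S_{n-1}AS_{n-1})=AS_n$, so that, once selfadjointness of $S_n$ is secured, it only remains to match domains and to invoke the standing hypothesis that $T^{2^n}$ is closed in order to conclude $T^{2^n}=AS_n\in\ldeux$.

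The selfadjointness is where \cref{relationalpha} enters. I would factor the bounded middle factor as $A=A\half A\half$ and set $C:=A\half S_{n-1}$, which is densely defined since $\D C=\D S_{n-1}$ and $S_{n-1}$, being selfadjoint, is densely defined. Because $AS_{n-1}=T^{2^{n-1}}$ is closed by hypothesis, \cref{relationalpha} (with $X=A$, $R=S_{n-1}$ and $\alpha=\tfrac12$) shows that $C=A\half S_{n-1}$ is closed and, via \eqref{equality}, that $C=(S_{n-1}A\half)\adj$; since $A\half$ is bounded and $S_{n-1}$ is closed, $S_{n-1}A\half$ is itself closed, whence $C\adj=S_{n-1}A\half$. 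This closedness of $C$ is the genuine payoff of the lemma, as a product of a bounded operator (with possibly nontrivial kernel) and a closed operator need not be closed on its own. As $C$ is closed and densely defined, von Neumann's theorem guarantees that $C\adj C$ is nonnegative and selfadjoint, and I would then read off $C\adj C=S_{n-1}A\half A\half S_{n-1}=S_{n-1}AS_{n-1}=S_n$.

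The point demanding the most care, and the one I expect to be the main obstacle, is precisely this identification $C\adj C=S_n$ together with the domain matching in the formal factorization above. Concretely, one checks that $\D(C\adj C)=\{x\in\D S_{n-1}:AS_{n-1}x\in\D S_{n-1}\}$ coincides with the natural domain both of the composition $S_{n-1}AS_{n-1}$ and of $(AS_{n-1})^2$, and that on this common domain all three compositions agree; since $A$ is everywhere defined one also gets $\D(AS_n)=\D S_n=\D(C\adj C)$. Granting this bookkeeping, $S_n=C\adj C=S_n\adj\ge 0$ and $A\in\lplus$, while $T^{2^n}=(AS_{n-1})^2=AS_n$ is closed by assumption; hence $T^{2^n}=AS_n\in\ldeux$, which closes the induction and simultaneously establishes the recursion $S_n=S_{n-1}AS_{n-1}$ with $S_0=B$.
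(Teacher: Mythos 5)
Your proof is correct and follows essentially the same route as the paper: both establish the inductive step by writing $S_n=(A\half S_{n-1})\adj A\half S_{n-1}$, using Lemma~\ref{relationalpha} and the closedness of $AS_{n-1}=T^{2^{n-1}}$ to see that $A\half S_{n-1}$ is closed, and then invoking von Neumann's theorem. The only difference is presentational — you spell out the general inductive step and the domain identification $C\adj C=S_{n-1}AS_{n-1}$ explicitly, whereas the paper works the cases $n=1,2$ and leaves the general induction to the reader.
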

    \begin{proof}
 The case $n=0$ is easily seen.  For $n=1,$ one has $T^2= A (BAB)=A S_1$ and
            %------------------------------------
               \begin{equation}\label{eq2induction}
                   S_1:=BAB= S_0 A S_0= (A\half B)\adj A\half B.
                \end{equation}
               %------------------------------------
 On the other hand $A \in \lplus$ and $AB=T$ is closed, so by Lemma \ref{relationalpha} $A\half B$ is closed. This proves, by $(\ref{eq2induction})$ that $S_1= S_1\adj \geq 0.$ \\
  \hspace*{0.3cm} For $n=2,$ one has
  \begin{align*}
    T^{2^2}= & A [ (B AB) A(B AB) ] = A ( S_1 A S_1)= A S_2,
  \end{align*}
  where
   \begin{equation}\label{eq3induction}
     S_2= S_1 A S_1= (A\half S_1)\adj A\half S_1.
   \end{equation}
     But $ A S_1= AB AB= T^2$ is closed, by hypothesis, $A \in \lplus$ and $S_1$ is closed, so       $A\half S_1$ is closed by Lemma \ref{relationalpha}. Hence, $(\ref{eq3induction}) $ yields that  $S_2= S_2^* \geq 0$. Using again Lemma \ref{relationalpha} and the fact that $T^{2^n}$  is closed, one can conclude by induction that, for all $n \in \N$,  $S_n$ is a nonnegative selfadjoint unbounded  operator such that $S_n= S_{n-1} A S_{n-1}$ and $T^{2^{n}}= A S_n \in \ldeux.$
    \end{proof}

%-------------------------------------------------------------
 It is worth mentioning that, in the bounded case, any element $T= AB \in \ldeux$ satisfies the following formula:
%******************************************
\begin{equation}\label{o5t}
  \sigma(A  B) \cup \{0\} = \sigma(BA) \cup \{0\},
\end{equation}
%******************************************
which  easily implies the positivity of the spectrum of $T.$ However, this is a bit more delicate when it comes to the unbounded case. In fact,  $(\ref{o5t})$ is not guaranteed anymore   unless some further spectral properties  are added like    $\rho(AB) \neq \vide$ and $\rho(BA)\neq \vide$; see Hardt et al. \cite{hardt}.  In particular,  for any  unbounded $T \in  \ldeux$ with    $\rho(T)\neq \vide$, it will be shown that     $\sigma(T) \inclu \rplus$.  This motivates the next   results.

\begin{lemma}\label{avanttheospectral}Let $X \in \lplus$ and $T \in CO(\h)$ be a densely defined operator such that  $XT$ is closed.   Then,
%_________________________________________________
      \begin{equation}\label{ma2oul01}
          (X\half T\adj  X\half  )\adj= X\half T X\half.
     \end{equation}
     Moreover,      if  $T=\tstar$ and   $\rho(XT) \neq \vide$, then
     %-------------------------------------
    \begin{equation}\label{ma2ouul2}
      \sigma(XT)  = \sigma(X\half T X\half)  \inclu \R,
    \end{equation}
  %---------------------------------------
   in particular,  $0 \in \rho(XT) \ifaf 0 \in \rho(X\half T X\half).$
\end{lemma}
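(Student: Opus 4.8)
The plan is to establish the adjoint identity \eqref{ma2oul01} first and then to read off \eqref{ma2ouul2} from it, by combining the self-adjointness it produces with the permanence of the nonzero spectrum under interchange of the two factors. For the identity, I would first note that $\tstar X\half$ is closed, being the product of the closed operator $\tstar$ with the everywhere-defined bounded operator $X\half$ on the right. Since $T$ is closed, $T\clo=T$, so the assumption that $XT$ is closed is precisely the hypothesis of \cref{relationalpha} for the relation $R=T$ and $\alpha=\tfrac12$. That lemma gives that $X\half T$ is closed and that $(\tstar X\half)\adj=X\half T$. In particular this adjoint is single-valued, which forces $\tstar X\half$, and hence $X\half \tstar X\half$, to be densely defined. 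Because $X\half$ is bounded and everywhere defined, the adjoint of a product with $X\half$ on the left distributes, $(X\half W)\adj=W\adj X\half$; taking $W=\tstar X\half$ and inserting $(\tstar X\half)\adj=X\half T$ yields $(X\half \tstar X\half)\adj=(\tstar X\half)\adj X\half=X\half T X\half$, which is \eqref{ma2oul01}.

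If in addition $T=\tstar$, then \eqref{ma2oul01} becomes $(X\half T X\half)\adj=X\half T X\half$, so $X\half T X\half$ is self-adjoint. Hence $\sigma(X\half T X\half)\inclu\R$ and, in particular, $\rho(X\half T X\half)\neq\vide$, since it contains $\C\setminus\R$.

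For the equality of the spectra, I would write $XT=X\half(X\half T)$ and $X\half T X\half=(X\half T)X\half$, and view these as $AB$ and $BA$ with $A=X\half$ bounded and $B=X\half T$ closed. Both $\rho(AB)=\rho(XT)\neq\vide$ (the hypothesis) and $\rho(BA)=\rho(X\half T X\half)\neq\vide$ hold, so the factor-swap permanence of Hardt et al.\ \cite{hardt} gives $\sigma(XT)\cup\{0\}=\sigma(X\half T X\half)\cup\{0\}$; the two spectra thus agree away from the origin. It then remains to prove $0\in\rho(XT)\ifaf 0\in\rho(X\half T X\half)$. If $0\in\rho(XT)$, then $\h=\ran(XT)\inclu\ran X\half$ forces $X\half$ to be surjective, hence (being bounded, nonnegative and self-adjoint) boundedly invertible; consequently $T=X\inv(XT)$ has a bounded inverse, and therefore so does $X\half T X\half$, with inverse $X\mhalf T\inv X\mhalf$. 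The reverse implication is symmetric, starting from $\h=\ran(X\half T X\half)\inclu\ran X\half$. Combining this equivalence with the agreement off the origin gives $\sigma(XT)=\sigma(X\half T X\half)\inclu\R$.

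The product/adjoint bookkeeping and the invertibility computations are routine; the load-bearing step is the correct use of the factor-swap spectral permanence, whose hypothesis $\rho(XT)\neq\vide$ is exactly what is assumed, while its companion hypothesis $\rho(X\half T X\half)\neq\vide$ is supplied for free by the self-adjointness obtained in the first step. The one point demanding genuine care is the behaviour at $0$, which I would handle by observing that $0\in\rho$ of either operator forces $X\half$ to be boundedly invertible, after which both operators reduce to products of invertible factors.
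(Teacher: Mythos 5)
Your proposal is correct and follows essentially the same route as the paper: Lemma \ref{relationalpha} to get closedness of $X\half T$ and the adjoint identity, self-adjointness of $X\half T X\half$ to guarantee a nonempty resolvent set, the factor-swap spectral permanence of Hardt et al.\ applied to $X\half$ and $X\half T$, and the separate treatment of the point $0$ via surjectivity of $X\half$. The only differences are cosmetic reorderings of the adjoint bookkeeping.
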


 \begin{proof}
 Observe that
  \begin{equation}\label{kara1}
 ( X\half \tstar X\half)\adj=( X\half (X\half T)\adj  )\adj  =  (X\half T)\clo  (X\half)\adj  = \overline{X\half T} X\half.
 \end{equation}
 Since $XT$ is closed, it follows from    Lemma \ref{relationalpha} that $X\half T$ is closed. This yields by $(\ref{kara1})$ that $
 ( X\half \tstar X\half)\adj =  X\half T X\half.$ \\
 \hspace*{0.4cm} Assume now that $\rho(XT) = \rho \left( X\half (X\half T )\right) \neq \vide$ and  $\tstar=T$.  Then, $(\ref{kara1})$ shows that $X\half T X\half$ is selfadjoint, and hence  $\rho(X\half T X\half) = \rho\left( X\half (T X\half )\right)  \neq \vide.$  Using \cite[Lemma 2.2]{hardt}  and  \cite[Lemma 2.4]{hardt}, one then concludes that
  %_________________________________
           \begin{equation}\label{eq1tunis}
          \sigma(XT) \cup \{0\}= \sigma\left( X\half (X\half T) \right) \cup \{0\} = \sigma\left( (X\half T) X\half  \right) \cup \{0\} \inclu \R .
           \end{equation}
           %____________________________________
  Now assume that   $0 \in \rho(XT)$. Then $\ran XT=\h= \ran X\half$, and hence $X\half$ is invertible. This implies the invertibility of $T$, so   $0 \in \rho(X\half T X\half).$ Similarly, the invertibility of $X\half T X\half$ ensures  that of $T,$ which proves the remaining implication.  Together with  $(\ref{eq1tunis})$, this shows $(\ref{ma2ouul2}).$
 \end{proof}

Thanks to the previous lemma, it will be seen in Proposition  \ref{theospectral} how any element of $\ldeux$ is connected to a nonnegative selfadjoint operator. This connection is introduced in the following definition and it will be further developed in \cref{section04}.
 \begin{definition}\normalfont
   Let $T,S \in LO(\h).$ If there exists   $G \in B(\h)$ such that   $TG=GS$ then    $T$ is said to be \emph{pre-similar} to   $S$ with interwining operator $G.$
 \end{definition}

\begin{proposition}\label{theospectral}
    If $ T=AB \in \ldeux$ then
   $
          (A\half B  A\half  )\adj=   A\half B  A\half
    $
 and $T$ is pre-similar to $A\half B  A\half $ with interwining operator $A\half.$      Moreover, if   $   \rho(T) \neq \vide$ ,    then
    %-------------------------------------
    \begin{equation*}
      \sigma(T)  = \sigma(A\half B  A\half)  \inclu \rplus.
    \end{equation*}
 \end{proposition}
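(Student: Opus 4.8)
The plan is to reduce the proposition directly to \Cref{avanttheospectral} by setting $X=A$ and verifying its hypotheses. Since $T=AB \in \ldeux$, we have $A \in \lplus$ and $B=B\adj \geq 0$, and by definition of the class $\ldeux$ the product $T=AB$ is closed. Thus $X=A$ is bounded nonnegative and $XT=AB=T$ is closed, so the first conclusion of \Cref{avanttheospectral} applies verbatim with $T$ replaced by $B$: since $B=B\adj$, equation \eqref{ma2oul01} reads
\begin{equation*}
  (A\half B A\half)\adj = A\half B A\half,
\end{equation*}
which is exactly the claimed selfadjointness of $A\half B A\half$.

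Next I would establish the pre-similarity. The intertwining operator is $G=A\half \in B(\h)$, and I must check $TG=G(A\half B A\half)$, i.e. $A B A\half = A\half (A\half B A\half)$. Formally both sides equal $A B A\half$; the only subtlety is the operator-theoretic handling of domains and closures in the unbounded setting. Here the identity $A\half(A\half B A\half) = (A\half A\half) B A\half = A B A\half = T A\half = TG$ should follow by regrouping, using that $A\half$ is bounded (so multiplication on the left is unproblematic) together with the closedness statements from \Cref{relationalpha}; in particular \Cref{relationalpha} guarantees $A\half B$ is closed and gives the needed adjoint/closure identities to justify associativity of the products. The verification amounts to checking the domain inclusions match, which is routine once closedness of the relevant products is in hand.

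For the spectral statement, assume $\rho(T)\neq\vide$. Since $T=AB=A\half(A\half B)$ with $A\half$ playing the role of $X\half$, I would invoke the second part of \Cref{avanttheospectral} (applied with $X=A$, $T\rightsquigarrow B$, noting $B=B\adj$ and $\rho(AB)=\rho(T)\neq\vide$) to obtain
\begin{equation*}
  \sigma(T) = \sigma(A\half B A\half) \inclu \R.
\end{equation*}
This already places the spectrum on the real line. To upgrade $\subseteq\R$ to $\subseteq\rplus$, I would use that $A\half B A\half$ is selfadjoint (from the first part) and, moreover, nonnegative: for $x$ in the appropriate domain, $\la A\half B A\half x, x\ra = \la B (A\half x), A\half x\ra \geq 0$ because $B\geq 0$. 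A nonnegative selfadjoint operator has spectrum contained in $\rplus$, and since $\sigma(T)=\sigma(A\half B A\half)$, the desired inclusion $\sigma(T)\inclu\rplus$ follows.

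The main obstacle I anticipate is not the spectral identity, which is handed to us by \Cref{avanttheospectral}, but rather the careful justification of the pre-similarity identity $TG=GS$ as an equality of operators (with matching domains) rather than a merely formal manipulation. In the unbounded setting, regrouping $A\half(A\half B A\half)$ into $(A B)A\half$ requires knowing that the intermediate products are closed and that no domain is inadvertently enlarged or shrunk; this is precisely where the closedness conclusions of \Cref{relationalpha} and the adjoint identity \eqref{ma2oul01} do the essential work. I would treat the nonnegativity observation for $A\half B A\half$ as the secondary technical point, ensuring the quadratic-form computation is valid on a core so that the selfadjoint operator it defines is genuinely nonnegative.
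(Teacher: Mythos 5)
Your proposal is correct and follows essentially the same route as the paper: both apply \Cref{avanttheospectral} with $X=A$ and the lemma's operator taken to be $B$, read off selfadjointness of $A\half BA\half$ from \eqref{ma2oul01}, verify $TA\half=A\half(A\half BA\half)$ for the pre-similarity, and use \eqref{ma2ouul2} for the spectral identity. Your explicit quadratic-form argument $\la A\half BA\half x,x\ra=\la BA\half x,A\half x\ra\geq 0$ to upgrade $\sigma\inclu\R$ to $\sigma\inclu\rplus$ is exactly the step the paper leaves implicit when it calls $S=A\half BA\half$ nonnegative.
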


\begin{proof}
Since by definition $A  \in \lplus$ and $AB$ is closed,  it follows from  Lemma \ref{avanttheospectral}  that
    $S:= A\half B A\half $ is a nonnegative selfadjoint operator such that
    $T A\half= A\half (A\half B A\half ) = A\half S.$ Hence,
    $T$ is pre-similar to $S.$ The remaining result follows immediately again from Lemma \ref{avanttheospectral}.
\end{proof}

\subsection{Sebestyén inequality}

In this section,  Sebestyén's theorem is generalized to the case of unbounded operators. The case of bounded operators was originally proved in \cite{sebestyen1983restrictions}, for a recent treatment see also \cite{gustavo2013products,product2021}, where  the following equivalence is stated for $T,B  \in \bh:$
\begin{equation}\label{mot1}
\tstar T \leq \lambda \tstar B, \ \lambda \geq 0 \quad  \ifaf \quad T= XB  \quad \text{for some } X \in \lplus.
\end{equation}
%It also stands for the second step towards the construction of the optimal pair $(A,B).$
 The following lemma serves as a first step towards the generalization of \eqref{mot1} and is a useful tool for some further results. The equivalence of (i) and (ii) holds  even in the case of linear relations; cf.  \cite[Lemma 4.2]{hassi22014},  and for related results see also  \cite[ Theorem 2.2]{popovici2013factorizations},
  where $T \inclu BY \ifaf \ran T \inclu \ran B  $ and     \cite[Lemma 3.1]{sandovici2025}, where  $YB \inclu T \ifaf \ker B \inclu \ker T,$  respectively are established for   linear relations $T,B$ and $Y$.

\begin{lemma} \label{lemprepthe}
Let $T,B:\h \rightarrow \kk $ be closed densely defined linear operators.  Then the following statements  are equivalent:
   \begin{enumerate}\def\labelenumi{\rm(\roman{enumi})}
     \item $Y B \inclu T$ has a solution $Y \in \bk;$
   \item   $T^* T  \leq c^2 B\adj B$  for some $0 \leq c \ (  =\n Y\n).$
      \end{enumerate}
     In this case, $Y$ can be selected such that $\ran Y \inclu \ranbarT$ and $\ker B\adj \inclu \ker Y.$ Furthermore,  if $\tstar B$ is selfadjoint then the following implication holds
            %-----------------------------------
      \begin{equation}\label{darkness1}
       Y B \inclu T  \text{ for some } Y \in \lplusk \Rightarrow T^* T  \leq  c_1  \tstar B \leq c_2 \, B\adj B,
      \end{equation}
      %-----------------------------------
       where $c_1,c_2 \geq 0.$
     In this case $\DB \inclu \D \halftsb \inclu  \DT$ and
       \begin{equation}\label{darkness2}
       \tstar B= B\adj Y B = B\adj T.
      \end{equation}
      %-----------------------------------
\end{lemma}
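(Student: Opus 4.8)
The plan is to treat the two assertions separately, since the equivalence of (i) and (ii) is of Douglas--Sebestyén type, while the final implication carries the real content.

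For (i)$\,\Rightarrow\,$(ii) I would observe that $YB\inclu T$ forces $\DB\inclu\DT$ with $Tx=YBx$ on $\DB$, whence $\n Tx\n\le\n Y\n\,\n Bx\n$ there; as $\D(B\adj B)\half=\DB$ and $\D(T\adj T)\half=\DT$ with $\n(T\adj T)\half x\n=\n Tx\n$ and $\n(B\adj B)\half x\n=\n Bx\n$, this says exactly $T\adj T\le\n Y\n^2 B\adj B$. For the converse I would run the usual construction: the assignment $Bx\mapsto Tx$ is well defined on $\ran B$ since $B(x-x')=0$ gives $\n T(x-x')\n\le c\,\n B(x-x')\n=0$; it has norm $\le c$ there, extends by continuity to $\ranbar B$, and is set to $0$ on $\ker B\adj=(\ranbar B)\orth$. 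The resulting $Y\in\bk$ satisfies $YB\inclu T$, $\ran Y\inclu\ranbarT$, $\ker B\adj\inclu\ker Y$ and $\n Y\n=c$. (The equivalence persists for relations; cf. \cite{hassi22014}.)

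For the implication \eqref{darkness1} the key device is the closed operator $C:=\overline{Y\half B}$. Here $YB$ is closable, being a restriction of the closed operator $T$, so $Y\half B$ is closable by Lemma \ref{relationalpha} and $C$ is closed and densely defined with $\DB\inclu\D C$; since the outer factor $Y\half$ is bounded, $C\adj=B\adj Y\half$, and thus $C\adj C$ is nonnegative selfadjoint. Approximating $x\in\D C$ by $x_n\in\DB$ in the graph norm of $C$ and using that $Tx_n=YBx_n=Y\half(Y\half Bx_n)$ converges to $Y\half Cx$ together with the closedness of $T$, one gets $\D C\inclu\DT$ and $Tx=Y\half Cx$ on $\D C$. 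The main obstacle is then to upgrade inclusions to equalities despite the fact that $\D C$ may be strictly larger than $\DB$. I would resolve this by selfadjoint maximality: a direct check gives $C\adj C\inclu B\adj T$ (for $x\in\D(C\adj C)$ one has $Y\half Cx=Tx\in\D B\adj$), while selfadjointness of $\tstar B$ gives $B\adj T\inclu\tstar B=(\tstar B)\adj$ (from $(\tstar B)\adj\supseteq B\adj T\clo=B\adj T$); hence $C\adj C\inclu\tstar B$ with both operators selfadjoint, forcing $C\adj C=\tstar B=B\adj T$. Finally $B\adj YB\inclu C\adj C$ and $\tstar B\inclu B\adj YB$ (the latter using $\tstar B=B\adj T$ and $YBx=Tx$ on $\DB$) yield $\tstar B=B\adj YB$, which together with the above proves \eqref{darkness2}.

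With the identification $\tstar B=C\adj C$ in hand, the remaining inequalities and the domain chain are immediate. Indeed $\halftsb=|C|$, so $\D\halftsb=\D C$ and therefore $\DB\inclu\D\halftsb\inclu\DT$. For $x\in\D C=\D\halftsb$ the relation $Tx=Y\half Cx$ gives $\n Tx\n\le\n Y\n\half\,\n Cx\n=\n Y\n\half\,\n\halftsb x\n$, i.e. $\tstar T\le\n Y\n\,\tstar B$; and for $x\in\DB$ the relation $Cx=Y\half Bx$ gives $\n\halftsb x\n=\n Y\half Bx\n\le\n Y\n\half\,\n Bx\n$, i.e. $\tstar B\le\n Y\n\,B\adj B$. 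Thus \eqref{darkness1} holds with $c_1=c_2=\n Y\n$, completing the proof.
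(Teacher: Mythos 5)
Your proof is correct and follows essentially the same route as the paper's: both arguments hinge on the closed operator $\overline{Y^{1/2}B}$ (whose closability comes from Lemma~\ref{relationalpha}) and on selfadjoint maximality of $T^*B$ to identify it with $(Y^{1/2}B)^*\,\overline{Y^{1/2}B} = B^*YB = B^*T$, from which the domain chain and the two inequalities follow. The only slip is cosmetic: with $c_1=\|Y\|$ the chain $c_1\,T^*B \le c_2\, B^*B$ requires $c_2=\|Y\|^2$ rather than $\|Y\|$.
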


\begin{proof} The implication $ \rm (i) \Rightarrow (ii)$ is clear since $\n YB x \n \leq \n Y\n \n B x\n$ for all $x \in \D B.$ To see the reverse implication notice that $G B x = Tx ,$ $x \in \D B$ is a well-defined operator with $\n G \n \leq c.$ Then, $Y \in B(\kk)$ is obtained by continuation of $G$ to $\ranbar B$ and using the zero extension to $(\ran B )\orth= \ker B\adj$, so that $\ker B\adj \inclu \ker Y.$  \\
         \hspace*{0.4cm} Now, assume that $\tstar B$ is selfadjoint and $YB \inclu T$ for some $Y \in \lplusk.$
          Then, $ Y\half  \overline{Y\half B} \inclu \overline{Y\half  \overline{Y\half B}} = \overline{Y  B} \inclu T$ and the first part of the lemma shows that there exists $0 \leq c_1 \leq \n Y\half\n$  such that
       \begin{equation}\label{hahahahah0a}
       \tstar T \leq   {c_1}^2 \, ( Y\half B)\adj \overline{Y\half B}.
        \end{equation}
        On the other hand, one has
        $$
        \tstar B \inclu    (YB)^*B=
          (Y\half Y\half B)^*B=( Y\half B)^*Y\half B \inclu
        ( Y\half B)^*\overline{Y\half B}.
         $$
 Since $  \tstar B$ is selfadjoint, it follows  that
         \begin{equation}
            \tstar B = B\adj Y B= ( Y\half B)^*\overline{Y\half B},
         \end{equation}
       which shows the first identity in  \eqref{darkness2}. Moreover, one has
         $$
         B\adj T \inclu \tstar B=( Y\half B)^*\overline{Y\half B}= B\adj (Y B) \inclu B\adj T,
         $$
        which  means that
         %_____________________
               \begin{equation*}
              B\adj T =\tstar B=  B\adj Y B =   ( Y\half B)^*\overline{Y\half B} \leq \n Y \n B\adj B.
         \end{equation*}
         %_________________________
       Combining this with   \eqref{hahahahah0a} leads to
        $$
        \tstar T \leq  {c_1}^2 \, \tstar B \leq {c_1}^2 \n Y\n B\adj B,
        $$
        which completes the proof of \eqref{darkness1}, \eqref{darkness2}   and $\DB   \inclu \D \halftsb \inclu \D T.$
\end{proof}

Motivated by Lemma \ref{lemprepthe}, the next step towards the extension of    the equivalence  \eqref{mot1}   is to address  the implication in  the following equivalence:
%__________________________________
     \begin{equation}\label{mot2}
        \tstar T \leq \lambda \tstar B, \ \lambda \geq 0 \quad  \ifaf \quad  XB \inclu T \quad \text{for some } X \in \lplus.
      \end{equation}
%__________________________________
For this, begin by observing that      in the  general case of closed densely defined operators, and for $B_0:=B \rest \D \tstar B$, one has $\tstar B= \tstar B_0 \inclu \tstar \overline{B_0} \inclu \tstar B$. This means that
\begin{equation}\label{motivation}
\tstar B= \tstar B_0 = \tstar \overline{B_0} = \tstar B,
\end{equation}
so  the following equivalence holds for $\lambda \geq 0$
\begin{equation}\label{motivation2}
\tstar T \leq \lambda \tstar B \ \ifaf \tstar T \leq \lambda \tstar \overline{B_0}.
\end{equation}
However, contrary to the bounded case where automatically $B_0= \overline{B_0}=B$, one cannot expect the factorization $T=XB$ as in $\eqref{mot1}$  since one only has
\begin{equation*}
  \overline{B_0}   \inclu B.
\end{equation*}
Thus, it becomes natural to restrict $B$ to $\overline{B_0}$ in the following extension of Sebestyén theorem.
%-------------------------------------------------------
\begin{theorem} \label{firsttheo01}
  Let $T,B:\h \rightarrow \kk $ be closed densely defined linear operators such that $\tstar B $ is a selfadjoint operator and let $B_0= B\rest\D \tstar B.$ Then the following assertions are equivalent for some $0 \leq \lambda \ (  =\n X\n):$
   \begin{enumerate}\def\labelenumi{\rm(\roman{enumi})}
   \item   $T^* T  \leq \lambda \tstar B$;
     \item $X\overline{B_0} \inclu T$ has a solution $X \in \lplusk$.
      \end{enumerate}
      In this case
      %-----------------------------------
      \begin{equation}\label{work}
       (B_0)\adj X \overline{B_0} =  \tstar \overline{B_0}=(B_0)\adj T
      \end{equation}
      %-----------------------------------
      and, moreover,       $X$ can be chosen such  that $\kerr \tstar \inclu \ker X$ with $\n X \n \leq \lambda.$ In particular,
      %-----------------------------------
      \begin{equation}\label{secondseb}
        T^* T  \leq \lambda \tstar \overline{B_0} \text{ and }  \DT \inclu \D \overline{B_0} \ifaf T=X \overline{B_0} \text{ for some } X \in \lplusk.
      \end{equation}
      %-----------------------------------
      In this case   $\ker X = \ker \tstar.$
\end{theorem}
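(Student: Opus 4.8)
The plan is to reduce everything to the closed operator $\overline{B_0}$, which is densely defined since $\D\tstar B=\D B_0$ is dense ($\tstar B$ being selfadjoint), and which satisfies $\tstar\overline{B_0}=\tstar B$ by \eqref{motivation}, so that $\tstar\overline{B_0}$ is selfadjoint and nonnegative and, by \eqref{motivation2}, $\rm(i)$ may be read as $\tstar T\leq\lambda\tstar\overline{B_0}$. The implication $\rm(ii)\Rightarrow(i)$ together with \eqref{work} should then follow at once from Lemma \ref{lemprepthe} applied to the pair $(T,\overline{B_0})$: a solution $X\in\lplusk$ of $X\overline{B_0}\inclu T$ yields, via \eqref{darkness1}, the bound $\tstar T\leq c_1\tstar\overline{B_0}=c_1\tstar B$ (which is $\rm(i)$ with $\lambda=c_1$), while \eqref{darkness2} gives $\tstar\overline{B_0}=(\overline{B_0})\adj X\overline{B_0}=(\overline{B_0})\adj T$; since $(B_0)\adj=(\overline{B_0})\adj$ this is exactly \eqref{work}, and it also delivers the inclusions $\D\overline{B_0}\inclu\D\halftsb\inclu\DT$.

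The substance is $\rm(i)\Rightarrow(ii)$. The naive idea of defining $X$ on $\ran\overline{B_0}$ by $X(\overline{B_0}x)=Tx$ and extending by zero fails, because $\ran T$ need not lie in $\ranbar{\overline{B_0}}$, so no such $X$ can be selfadjoint with the required kernel. Instead I would factor through the square root. Set $R:=(\tstar\overline{B_0})\half=(\tstar B)\half$, a nonnegative selfadjoint operator with $R^2=\tstar B$. Condition $\rm(i)$ says $\D R\inclu\DT$ and $\|Tx\|\leq\sqrt\lambda\,\|Rx\|$ for $x\in\D R$, so $Rx\mapsto Tx$ is well defined and bounded on $\ran R$; continuation to $\ranbar R$ and the zero extension to $(\ran R)\orth=\ker R$ produce $C\in B(\h,\kk)$ with $\|C\|\leq\sqrt\lambda$, $CR\inclu T$, and $\ker R\inclu\ker C$. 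I then put $X:=CC\adj\in\lplusk$, so that $\|X\|\leq\lambda$ and, using $\ranbar C=\ranbar T$ (from $T=CR$), $\ker X=\ker C\adj=(\ranbar C)\orth=(\ranbar T)\orth=\ker\tstar$.

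The crux is checking $X\overline{B_0}\inclu T$. Taking adjoints in $CR\inclu T$ and using \eqref{tamer} for the bounded factor $C$ gives $\tstar\inclu(CR)\adj=RC\adj$. Hence for $x\in\D(\tstar\overline{B_0})=\D B_0$ one has $\overline{B_0}x\in\D\tstar$ and $R^2x=\tstar\overline{B_0}x=RC\adj\overline{B_0}x$, so $Rx-C\adj\overline{B_0}x\in\ker R\inclu\ker C$; applying $C$ and invoking $CRx=Tx$ yields $X\overline{B_0}x=CC\adj\overline{B_0}x=CRx=Tx$. For arbitrary $x\in\D\overline{B_0}$, choose $x_n\in\D B_0$ with $x_n\to x$ and $B_0x_n\to\overline{B_0}x$; then $Tx_n=XB_0x_n\to X\overline{B_0}x$ by boundedness of $X$, and closedness of $T$ forces $x\in\DT$ with $Tx=X\overline{B_0}x$. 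This proves $X\overline{B_0}\inclu T$, and in particular $\D\overline{B_0}\inclu\DT$.

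Finally, \eqref{secondseb} follows by combining the equivalence just established with the extra hypothesis $\DT\inclu\D\overline{B_0}$: this upgrades $X\overline{B_0}\inclu T$ to the equality $T=X\overline{B_0}$ on the common domain $\DT=\D\overline{B_0}$, and conversely any such equality is a special case of $\rm(ii)$; the identity $\ker X=\ker\tstar$ is the one recorded for the constructed $X$. The main obstacle is precisely this last verification that $X=CC\adj$ reproduces $T$ rather than merely its compression onto $\ranbar{\overline{B_0}}$, and that is exactly where the choice $\ker R\inclu\ker C$ and the selfadjointness of $\tstar B$ (through $R^2=\tstar B$ and $\tstar\inclu RC\adj$) are indispensable.
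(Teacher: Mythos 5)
Your proof is correct in its main substance and follows essentially the same route as the paper: the paper likewise applies Lemma \ref{lemprepthe} to the pair $\bigl(T,(\lambda \tstar B)\half\bigr)$ to produce a bounded $G_0$ (your $C$) with $G_0(\lambda\tstar B)\half\inclu T$ and $\kerr(\tstar B)\half\inclu\kerr G_0$, and then sets $X=\lambda G_0 G_0\adj$. The only genuine divergence is in how $X\overline{B_0}\inclu T$ is verified: the paper takes adjoints, multiplies the inclusion $\lambda\tstar B\inclu(\lambda\tstar B)\half\,\lambda G_0\adj B$ by the Moore--Penrose inverse $(\lambda\tstar B)^{(-\frac12)}$, and uses that $\D\tstar B$ is a core for $(\tstar B)\half$ to obtain $\lambda G_0\adj\overline{B_0}\inclu(\lambda\tstar B)\half$; you instead argue pointwise on $\D\tstar B$ via $Rx-C\adj \overline{B_0}x\in\kerr R\inclu\kerr C$ and then pass to $\D\overline{B_0}$ by closedness of $T$. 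Both are valid and of comparable length; your version trades the core/closure step for an elementary approximation argument.

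One peripheral claim is wrong: you assert $\ranbar C=\ranbarT$ ``from $T=CR$''. You only have $CR\inclu T$, and since $\D(\tstar B)\half$ may be a proper subspace of $\DT$, the closure of $T\bigl(\D(\tstar B)\half\bigr)$ can be strictly smaller than $\ranbarT$. Hence the construction only yields $\kerr\tstar=(\ranbarT)\orth\inclu(\ranbar C)\orth=\kerr X$, which is exactly what the theorem claims in the general case. For the equality $\kerr X=\kerr\tstar$ in \eqref{secondseb} you need the reverse inclusion from the factorization itself: $T=X\overline{B_0}$ gives $\tstar=(B_0)\adj X$ by \eqref{tamer} (bounded left factor), so $\kerr X\inclu\kerr\tstar$; this is how the paper closes that point, and it is a one-line repair of your argument.
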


\begin{proof}
 Assume (i). Then  a direct application of Lemma \ref{lemprepthe} to $T$ and $\halftsb$   leads to the existence of  $G_0 \in \bhk$ such that $\ranbar G_0 \inclu \ranbarT$,  $ \ker \tstar B \inclu \ker G_0 $ and
      %_______________________________
     \begin{equation}\label{0sfx001}
          G_0 \halfltsb \inclu T.
      \end{equation}
      %_______________________
     Hence
    \begin{equation}\label{sfx001}
    \tstar \inclu   \halfltsb (G_0)\adj
     \end{equation}
  and
      \begin{equation}\label{sfx002}
 \lambda    \tstar B \inclu \halfltsb \lambda  (G_0)\adj B.
     \end{equation}
Multiplying  $\eqref{sfx002}$   from the    left by $(\lambda \tstar B)^{(-\frac{1}{2})}$, one obtains
      \begin{equation}\label{choftek001}
                  \ppa I_{\D \halftsb} \halfltsb  \inclu  \ppa I_{\D \halftsb} \lambda (G_0)\adj B \inclu  \lambda (G_0)\adj B.
                   \end{equation}
This implies that
\[               
 (\ppa I_{\D \halftsb} \halfltsb) \rest \D \tstar B =  \lambda (G_0)\adj B \rest \D \tstar B= \lambda (G_0)\adj B_0,
\]                              
and hence
\begin{equation}\label{chotek002}
   \halfltsb \rest \D \tstar B=\lambda (G_0)\adj B_0.
\end{equation}
Since $\D \tstar B $ is a core for $\halftsb,$ i.e. $\halftsb =  \overline{\halftsb \rest \D \tstar B }$,  one concludes that
\begin{equation}\label{celine}
            \halfltsb=  \overline{\halfltsb \rest \D \tstar B}  =  \overline{\lambda (G_0)\adj B_0} \supseteq  \lambda (G_0)\adj \overline{B_0}.
\end{equation}
Together with $\eqref{0sfx001}$ this implies that
                $
             \lambda  G_0 (G_0)\adj \overline{B_0} \inclu T
                 $
                and therefore
                $$
                X \overline{B_0} \inclu T
                $$
                 with $ X=\lambda G_0 (G_0)\adj \in \lplusk$ so that  $\n X \n \leq \lambda.$

    %-----------------------------------------------------------
    \hspace*{0.4cm} The reverse implication as well as the equalities in \eqref{work} follow immediately from Lemma \ref{lemprepthe}. \\
    %____________________________
  \hspace*{0.4cm} The inclusion $\kerr T\adj \inclu \kerr X$ follows easily from the construction of $G_0,$  the identity  $\eqref{sfx001}$ and from the fact that  $\ker (G_0)\adj= \ker X.$

   Now assume  that $\DT \inclu \D \overline{B_0}$ and $  T^* T  \leq \lambda \tstar \overline{B_0}.$  Then,  the implication $\rm "(i)  \Rightarrow (ii)"$ immediately yields that $X\overline{B_0}= T$ for some $X \in \lplusk.$ For the converse, observe that $X \overline{B_0}= T$ is closed, so   $X\half \overline{B_0}$ is closed by  Lemma  \ref{relationalpha}. Consequently,
    $$
    \tstar \overline{B_0} = (B_0)\adj X \half X\half \overline{B_0}= (  X\half \overline{B_0})\adj  X\half \overline{B_0}$$
    is a nonnegative selfadjoint operator with $\D ( \tstar \overline{B_0})\half = \D X\half \overline{B_0} = \D \overline{B_0}= \DT.$ Moreover,
    $
    \tstar T = \tstar  X \overline{B_0} \leq  \n X \n \tstar \overline{B_0},$  which completes the argument. On the other hand, one has $\tstar = {B_0}\adj X$, so $\ker X \inclu \ker \tstar.$ Consequently, $\ker \tstar = \ker X$ by the first part of the proof.
\end{proof}
%__________________________________________________________________

 \begin{remark}\label{remark1}\normalfont
   \begin{enumerate}\def\labelenumi{\rm(\roman{enumi})}
\item The inequality in item (i) of Theorem \ref{firsttheo01} induces  the following new inequality
\begin{equation}\label{admitrem}
 \tstar \overline{B_0} \leq \mu (B_0)\adj   \overline{B_0},
\end{equation}
where $\mu = \n X \n. $  This follows from Lemma \ref{lemprepthe}, \eqref{darkness1}. Notice that the inclusion $\overline{B_0} \inclu B$ implies that   $B\adj B \leq  {B_0}\adj \overline{B_0},$ and hence \eqref{admitrem} does not necessarily imply  the inequality $\tstar B \leq \gamma B\adj B,$ $\gamma \geq 0.$
  %______________________________________________________
\item
The inequality  \eqref{admitrem} is not sufficient to prove item (i) of Theorem \ref{firsttheo01}. However, one can always obtain the following equivalence
 %______________________________________________________
 %__----------------------___
 \begin{align} \label{3equiv}
\begin{split}
  T^* T  \leq \lambda  \tstar   & \overline{B_0}  \leq \lambda  \, \mu (B_0)\adj   \overline{B_0} \\
    & \Updownarrow \\
        X\overline{B_0} \inclu  T \text{ has} & \text{ a solution } X \in \lplusk.
\end{split}
\end{align}
%__---------------------
 %______________________________________________________
 \item  By construction,  $\lambda = 0$ if and only if  the solution $X=0,$ in which case $T=0.$
%______________________________________________________
\end{enumerate}
 \end{remark}

Although Theorem \ref{firsttheo01} establishes the equivalence \eqref{mot2} only for $B_0$, its proof reveals that an additional condition would allow the desired equivalence to hold for $B,$  more generally. This can be seen in the following remark.

\begin{remark}\label{remarko5ra}
\normalfont
  Following Remark \ref{remark1}, a particular case of Theorem \ref{firsttheo01} where  $\D \tstar B$ is a core for $B$ leads to the following statements for $\lambda \geq 0:$
\begin{enumerate}
  \item $ X B \inclu T $   for some $ X \in \lplusk \ifaf  T^* T  \leq \lambda \tstar B.$
  \item $T=X B$ for some $ X \in \lplusk \ifaf  T^* T  \leq \lambda \tstar B$ and $ \DT \inclu \D B.$
\end{enumerate}
\end{remark}

In the absence of the additional core conditions stated in Remark \ref{remarko5ra}, the question arises about the most appropriate generalization of \eqref{mot1} to the unbounded setting. Motivated by  \eqref{3equiv}, this question naturally leads to consider whether the converse of \eqref{darkness1} in  Lemma \ref{lemprepthe} is true. Since the latter implies that
$$
\DB \inclu \D \halftsb  \text{ and }   B\adj T = \tstar B,
$$
it becomes natural   also to impose these conditions in the following result, which in fact constitutes the final step towards the objective  of this subsection.

\begin{proposition} \label{sebestyenB}
   Let $T,B:\h \rightarrow \kk $ be closed densely defined linear operators such that $\tstar B=B\adj T $ is selfadjoint.  Then the following assertions are equivalent for some $0 \leq \lambda \ (  =\n X\n):$
   \begin{enumerate}\def\labelenumi{\rm(\roman{enumi})}
   \item   $T^* T  \leq \lambda \tstar B$  and $\DB \inclu \D \halftsb =\DT $;
   \item $ X B \inclu T$ has a solution $X \in \lplusk$;
     \item $\overline{X B} =T$ has a solution $X \in \lplusk.$
      \end{enumerate}
      In this case
      %-----------------------------------
     \begin{equation}\label{workfin2}
       B\adj X B =  \tstar B=B\adj T
      \end{equation}
      %-----------------------------------
      and, moreover, $X$ can be chosen such that $\kerr \tstar =\ker X$ with $\n X \n \leq \lambda.$
\end{proposition}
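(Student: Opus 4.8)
The plan is to derive everything from Theorem~\ref{firsttheo01} and Lemma~\ref{lemprepthe}, the only genuinely new content being the passage from the truncation $\overline{B_0}$ to $B$ itself. The standing hypothesis $\tstar B=B\adj T$, together with the domain identity $\D\halftsb=\DT$ in item (i), are precisely the ingredients that should make this passage possible, in the spirit of Remark~\ref{remarko5ra}. I would prove the cycle $\mathrm{(i)}\Rightarrow\mathrm{(iii)}\Rightarrow\mathrm{(ii)}\Rightarrow\mathrm{(i)}$, the middle implication $\mathrm{(iii)}\Rightarrow\mathrm{(ii)}$ being trivial since $X B\inclu\overline{XB}=T$.

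For $\mathrm{(i)}\Rightarrow\mathrm{(iii)}$, from $T^*T\leq\lambda\tstar B$ the construction in Theorem~\ref{firsttheo01} furnishes $X=\lambda G_0G_0\adj\in\lplusk$ with $\|X\|\leq\lambda$, $\ker X=\ker\tstar$, a factor $G_0\in\bhk$ with $G_0\halfltsb\inclu T$, and the inclusion $X\overline{B_0}\inclu T$. The first move is to use $\D\halftsb=\DT$ to promote $G_0\halfltsb\inclu T$ to the \emph{equality} $T=G_0\halfltsb=\sqrt\lambda\,G_0\halftsb$, since the two operators now share the domain $\D\halfltsb=\D\halftsb=\DT$. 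Next, combining the selfadjointness of $\halftsb$ with \eqref{celine} (which gives $\halftsb=\overline{\sqrt\lambda\,G_0\adj B_0}$) and the adjoint rule \eqref{tamer} (legitimate because $G_0$ is bounded), I obtain $\halftsb=(\halftsb)\adj=\sqrt\lambda\,(B_0)\adj G_0$. Substituting into $T\adj=\sqrt\lambda\,\halftsb G_0\adj$ yields $T\adj=\lambda\,(B_0)\adj G_0G_0\adj=(B_0)\adj X$, and taking adjoints once more (again using that $X$ is bounded) gives the exact factorization $T=X\overline{B_0}$. A short domain chase then closes the argument: $\D\overline{B_0}\inclu\DB\inclu\D\halftsb=\DT=\D\overline{B_0}$ forces $\D\overline{B_0}=\DB$, hence $\overline{B_0}=B$ and $T=XB=\overline{XB}$, which is (iii). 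The identities \eqref{workfin2} and $\ker X=\ker\tstar$ then follow from Lemma~\ref{lemprepthe} and Theorem~\ref{firsttheo01}.

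For the converse $\mathrm{(ii)}\Rightarrow\mathrm{(i)}$, once $XB\inclu T$ with $X\in\lplusk$, Lemma~\ref{lemprepthe} (via \eqref{darkness1}--\eqref{darkness2}) immediately delivers $T^*T\leq\lambda\tstar B$, the inclusions $\DB\inclu\D\halftsb\inclu\DT$, and $\tstar B=B\adj X B=B\adj T$, i.e.\ \eqref{workfin2}. The one remaining point---and what I expect to be the main obstacle---is the reverse inclusion $\DT\inclu\D\halftsb$ needed for the equality $\D\halftsb=\DT$ in (i); note that the inclusion $\D\halftsb\inclu\DT$ is automatic from the inequality, while its reverse is not. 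This is exactly where the standing assumption $\tstar B=B\adj T$ (rather than mere selfadjointness of $\tstar B$) must enter: from $XB\inclu T$ one has $T\adj\inclu(XB)\adj=B\adj X$, and the plan is to upgrade this to the equality $T\adj=B\adj X$, equivalently to show that $\DB$ is a core for $T$, so that $T=\overline{XB}$ and $\D\halftsb=\D\overline{X\half B}=\DT$. Since $\tstar B=B\adj T$ identifies the two domains $\{x\in\DB:Bx\in\D\tstar\}$ and $\{x\in\DT:Tx\in\D B\adj\}$, I would exploit this coincidence to force every $x\in\DT$ into $\D\halftsb$. This core/domain equality is the crux of the proposition---precisely what separates the present statement (valid for $B$) from Theorem~\ref{firsttheo01} (valid only for $\overline{B_0}$)---and I anticipate it will be the most delicate step of the argument.
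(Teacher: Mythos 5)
There is a genuine gap in your $(\mathrm{i})\Rightarrow(\mathrm{iii})$ argument, located at the second adjoint step. From $T=G_0\halfltsb$ and $\halfltsb=(\halfltsb)\adj$ you correctly arrive at $\tstar=(B_0)\adj X$, but the passage back, $T=T\clo=\bigl((B_0)\adj X\bigr)\adj=X\overline{B_0}$, is not licensed by \eqref{tamer}: that rule gives $(ST)\adj=\tstar S\adj$ only when the \emph{left} factor $S$ is bounded (or the right factor is boundedly invertible), whereas here the bounded factor $X$ sits on the right, so one only obtains the inclusion $\bigl((B_0)\adj X\bigr)\adj\supseteq X\overline{B_0}$, which is nothing beyond Theorem \ref{firsttheo01}. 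In fact the equality you aim for cannot hold in general: your subsequent domain chase would force $\DT=\D\overline{B_0}=\DB$ and hence $T=XB$, but hypothesis (i) only gives $\DB\inclu\D\halftsb=\DT$, and statement (iii) deliberately asserts the weaker $\overline{XB}=T$ (i.e.\ $\DB$ is a core for $T$), not $XB=T$; the exact factorization is reserved for Corollary \ref{khsr}, where $\DT\inclu\DB$ is additionally assumed. The paper avoids this trap by staying on the non-adjoint side: from $B\adj T=B\adj G_0\halfltsb$ and the core property of $\D\tstar B$ for $\halfbt$ it first shows that $G_0\adj B$ is densely defined on its "adjoint side", hence closable, and then invokes the hypothesis $\DB\inclu\D\halftsb$ --- which your argument never uses where it matters --- to identify $\overline{\lambda G_0\adj B}$ with $\halfltsb$; combined with $T=G_0\halfltsb$ this yields $T=\lambda\,\overline{G_0G_0\adj B}=\overline{XB}$ directly.

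The direction $(\mathrm{ii})\Rightarrow(\mathrm{i})$ is also not closed: you correctly identify $\DT\inclu\D\halftsb$ as the crux, but what you offer is only a plan ("I would exploit this coincidence\dots"), not an argument; the identification of the two domains $\{x\in\DB:\,Bx\in\D\tstar\}$ and $\{x\in\DT:\,Tx\in\D B\adj\}$ coming from $\tstar B=B\adj T$ does not by itself place an arbitrary $x\in\DT$ into $\D\halftsb=\D\overline{X\half B}$ (cf.\ \eqref{darkness2}). The paper treats this implication as immediate from Lemma \ref{lemprepthe}; whether or not one finds that fully satisfactory, your proposal does not supply the missing step. As written, neither direction of your cycle is complete.
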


\begin{proof} Assume (i). Then, following the same reasoning as   in the proof of Theorem \ref{firsttheo01}, \eqref{0sfx001} together with the fact that $\DT = \D \halftsb$ gives
      \begin{equation}\label{eqkdhm1}
       T= G_0 \halfltsb,
       \end{equation}
   and hence
    \begin{equation}\label{eqkdhm}
      B\adj T= B\adj G_0 \halfltsb.
    \end{equation}
  As $B\adj T = \tstar B $ is nonnegative and selfadjoint, multiplying \eqref{eqkdhm} from the right by $( B\adj T )^{(-\frac{1}{2})}$ implies that
  %______________________________
  \begin{eqnarray}
  \halfbt \left( P_{{\ker B\adj T}\orth} \rest  \halfbt( \D B\adj T)   \right)    \inclu B\adj G_0 \label{dense} .
  \end{eqnarray}
 %______________________________
 Since  $\D \tstar B$ is a core for $\halfbt,$ the set $P_{{\ker B\adj T}\orth} \rest  \halfbt( \D B\adj T)$ is dense in $\h$  and, therefore, $\dombar B\adj G_0 =\h$ by $\eqref{dense}.$  Hence,  $ {G_0}\adj B $ is a closable operator which satisfies
  $$
  \halfltsb = \overline{\lambda {G_0}\adj B_0} \inclu  \overline{\lambda {G_0}\adj B};
    $$
    see  \eqref{celine}.
    Therefore, $\DB \inclu \D \halftsb$ implies that
    $$
   \lambda {G_0}\adj B= \overline{\lambda {G_0}\adj B} \rest  \DB  \inclu \overline{\lambda {G_0}\adj B} \rest  \D \halftsb   =  \halfltsb  \inclu \overline{\lambda {G_0}\adj B}.
    $$
    Consequently  $ \overline{\lambda {G_0}\adj B} =   \halfltsb,$ which by \eqref{eqkdhm1} gives
    $$
     T= \lambda G_0   \overline{ {G_0}\adj B} = \lambda \overline{ G_0   \overline{ {G_0}\adj B} } = \lambda  \overline{ G_0     {G_0}\adj B } \supseteq G_0     {G_0}\adj B.
     $$
        %_____________________
        This completes the proof of the implication $ \rm (i) \Rightarrow (iii) \Rightarrow (ii)$ for $X = G_0 (G_0)\adj \in \lplusk.$ The implication  $ \rm (ii) \Rightarrow (i)$ together with the identity \eqref{workfin2} is immediate from Lemma \ref{lemprepthe}.\\
        \hspace*{0.4cm} To see that $\ker \tstar = \ker (G_0)\adj,$ observe from   \eqref{eqkdhm1} that $\tstar=  \halfltsb (G_0)\adj$ and hence $ \ker X= \ker(G_0)\adj \inclu \ker \tstar.$  On the other hand,   the inclusion $XB \inclu T$ together with Lemma \ref{lemprepthe} shows that $\ranbar X \inclu \ranbarT$ or, equivalently, $\ker \tstar \inclu \ker X.$   Consequently, $\ker X=\ker \tstar.$
\end{proof}

Observe that, under the assumptions of Proposition \ref{sebestyenB}, items $\rm (i) - (iii)$ are equivalent to the following statement for some $\lambda \geq 0$:
      %------------------------------
      \begin{equation}\label{further}
         T^* T  \leq \lambda \tstar B \leq \lambda^2 B\adj  B  \quad \text{   and} \quad
       \D \halftsb =\DT .
       \end{equation}
       %----------------------------
   Moreover, some further   necessary and  sufficient conditions for \eqref{further}  may be derived  through the study of forms, as investigated in  \cite{Hassi25sebestyen}.
\begin{corollary}\label{khsr}
     Let $T,B:\h \rightarrow \kk $ be closed densely defined linear operators. Then the following assertions are equivalent for some $0 \leq \lambda \ (  =\n X\n):$
   \begin{enumerate}\def\labelenumi{\rm(\roman{enumi})}
   \item  $\tstar B$ is a selfadjoint operator such that $\DT \inclu \DB \inclu  \D \halftsb$ and  $T^* T  \leq \lambda \tstar B =\lambda B\adj T;$
   \item $ X B =T$ has a solution $X \in \lplusk.$
      \end{enumerate}
\end{corollary}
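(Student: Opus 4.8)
The plan is to obtain the stated equivalence from \cref{sebestyenB} by extracting the missing domain information directly out of the operator inequality. The key observation is that, by the very definition of the order relation for nonnegative selfadjoint operators, the inequality $\tstar T \leq \lambda \tstar B$ already forces $\D (\lambda \tstar B)\half \inclu \D (\tstar T)\half$; since $\D (\tstar T)\half = \DT$ and $\D(\lambda \tstar B)\half = \D\halftsb$ whenever $\lambda > 0$ (the case $\lambda = 0$ gives $T = 0$ and is trivial), this reads $\D\halftsb \inclu \DT$. Combining it with the chain $\DT \inclu \DB \inclu \D\halftsb$ assumed in (i) produces $\D\halftsb \inclu \DT \inclu \DB \inclu \D\halftsb$, so that all three domains coincide: $\DT = \DB = \D\halftsb$. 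In particular the hypotheses of \cref{sebestyenB}(i) are met, namely $\tstar B = B\adj T$ is selfadjoint, $\tstar T \leq \lambda \tstar B$, and $\DB \inclu \D\halftsb = \DT$.

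For the implication (i) $\Rightarrow$ (ii) I would then apply \cref{sebestyenB} to obtain a solution $X \in \lplusk$ of $X B \inclu T$ with $\n X\n \leq \lambda$. Since $X$ is bounded one has $\D(XB) = \DB$, and because the previous step gave $\DB = \DT$, the inclusion $XB \inclu T$ is between two operators sharing the same domain and is therefore an equality $XB = T$, which is exactly (ii).

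For the converse (ii) $\Rightarrow$ (i) I would argue directly from $T = XB$. Boundedness of $X$ yields $\DT = \DB$ at once; closedness of $T = XB$ together with \cref{relationalpha} shows that $X\half B$ is closed, and the adjoint identity for a bounded factor gives $\tstar = B\adj X$, whence $\tstar B = B\adj X B = (X\half B)\adj X\half B$ is selfadjoint and nonnegative and coincides with $B\adj T$. The same factorization gives $\halftsb = |X\half B|$, so $\D\halftsb = \DB$, and the chain $\DT \inclu \DB \inclu \D\halftsb$ holds (in fact with equalities throughout). Finally, the elementary bound $X^2 \leq \n X\n X$ for bounded nonnegative $X$ gives, for every $x \in \DT$, $\n Tx\n^2 = \langle X^2 Bx, Bx\rangle \leq \n X\n\,\n X\half Bx\n^2 = \n X\n\,\n\halftsb x\n^2$, that is $\tstar T \leq \n X\n\, \tstar B$ with $\lambda = \n X\n$.

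The main obstacle is conceptual rather than computational: one must notice that the extra inclusion $\DT \inclu \DB$ appearing in (i) — and absent from \cref{sebestyenB} — is precisely what collapses the inclusion $XB \inclu T$ furnished by the proposition into a genuine factorization $XB = T$. Recognizing that the inequality itself contributes the reverse inclusion $\D\halftsb \inclu \DT$, thereby pinning all three domains to a single space, is the decisive step; everything else reduces to bookkeeping with domains and the adjoint identities coming from \cref{relationalpha}.
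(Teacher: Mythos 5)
Your proof is correct and follows essentially the same route as the paper: both extract $\D \halftsb \inclu \DT$ from the ordering built into $T^*T \leq \lambda\, \tstar B$, feed the resulting domain equalities into Proposition~\ref{sebestyenB} to upgrade $XB \inclu T$ to $XB=T$, and for the converse use Lemma~\ref{relationalpha} to make $X\half B$ closed and hence $\tstar B=(X\half B)\adj X\half B$ selfadjoint. The only cosmetic difference is that the paper cites Lemma~\ref{lemprepthe}, \eqref{darkness1}, for the final inequality where you compute it directly from $X^2 \leq \n X \n X$.
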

\begin{proof}
 If (i) holds then $\D \halftsb \inclu \DT,$ and hence item (ii) easily follows from the implication $\rm(i) \Rightarrow (ii)$ of Proposition \ref{sebestyenB}. Conversely, if $T=XB$ has a solution $X \in \lplus$ then $X\half B$ is closed, by Lemma \ref{relationalpha} and therefore
  $
  \tstar B= B\adj  X B=B\adj T=  (X\half B)\adj X\half B
  $
  is a nonnegative selfadjoint operator. Hence, one concludes the result from Lemma \ref{lemprepthe}, \eqref{darkness1} and from the fact that $\DT= \DB.$
\end{proof}
A consequence of Corollary \ref{khsr} leads to the characterization of the class $\ldeux$ by means of Sebestyén inequality described in the following theorem, thereby generalizing \cite[Theorem 4.5]{product2021}.
\begin{theorem}\label{sebldeux}
  Let $T \in \coh$ be a densely defined operator. Then, $T \in \ldeux$ if and only if  $\tstar T \leq    \, \tstar Y = YT$ admits a solution $Y=Y\adj \geq 0$ such that $\tstar Y$ is selfadjoint and  $\DT \inclu  \D Y  \inclu \D( \tstar Y)\half.$
\end{theorem}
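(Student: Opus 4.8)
The plan is to obtain both implications directly from Corollary \ref{khsr} specialized to $\kk=\h$, observing that the operator $Y$ sought in the statement plays exactly the role of the (in general unbounded) nonnegative selfadjoint factor $B$ in that corollary, while the bounded nonnegative factor $A$ of the desired factorization is the operator $X$ delivered by the corollary. Indeed, by definition $T\in\ldeux$ means $T=AB$ with $A\in\lplus$ and $B=B\adj\geq 0$, which is precisely assertion $\rm(ii)$ of Corollary \ref{khsr} (with $X=A$) in the situation where the closed factor $B$ happens to be nonnegative selfadjoint. The one point needing attention is that the inequality in the theorem carries coefficient $1$ whereas Corollary \ref{khsr} produces a scalar $\lambda=\n X\n$; this is resolved simply by absorbing $\lambda$ into the operator, i.e. replacing $B$ by $\lambda B$.

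For necessity, I would start from $T=AB\in\ldeux$ and apply the implication $\rm(ii)\Rightarrow(i)$ of Corollary \ref{khsr} with $X=A$ and $\lambda=\n A\n$. This gives that $\tstar B$ is selfadjoint, that $\DT\inclu\DB\inclu\D\halftsb$, and that $\tstar T\leq\lambda\tstar B=\lambda B\adj T$. Putting $Y:=\lambda B$ produces a nonnegative selfadjoint operator with $\tstar Y=\lambda\tstar B$ selfadjoint, $\D Y=\DB$ and $\D(\tstar Y)\half=\D\halftsb$, so the chain $\DT\inclu\D Y\inclu\D(\tstar Y)\half$ holds. Finally $\tstar T\leq\lambda\tstar B=\tstar Y$, while using $B=B\adj$ and $\tstar B=B\adj T$ gives $\tstar Y=\lambda\tstar B=\lambda BT=YT$; hence $\tstar T\leq\tstar Y=YT$. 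The degenerate case $\lambda=0$ forces $T=0$, for which $Y=0$ works.

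For sufficiency, I would take the given $Y=Y\adj\geq 0$ with $\tstar Y$ selfadjoint, $\DT\inclu\D Y\inclu\D(\tstar Y)\half$ and $\tstar T\leq\tstar Y=YT$, and set $B:=Y$. Then assertion $\rm(i)$ of Corollary \ref{khsr} holds with $\lambda=1$: the operator $\tstar Y$ is selfadjoint, the domain chain $\DT\inclu\D Y\inclu\D(\tstar Y)\half$ is exactly the hypothesis, and $\tstar T\leq\tstar Y=Y\adj T$ follows from $Y=Y\adj$ together with $\tstar Y=YT$. The implication $\rm(i)\Rightarrow(ii)$ then yields $X\in\lplusk=\lplus$ with $T=XY$, which is the factorization exhibiting $T\in\ldeux$. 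The only genuine obstacle in the whole argument is the bookkeeping of the scalar $\lambda$, namely checking that scaling by $\lambda>0$ preserves all the domain identities and nonnegativity and handling $\lambda=0$ separately; beyond this, no new analytic input is needed once Corollary \ref{khsr} is available.
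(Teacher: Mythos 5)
Your proposal is correct and follows exactly the paper's route: the paper proves this theorem in one line by applying Corollary \ref{khsr} with $B=Y$, and your argument is that same reduction with the bookkeeping of the scalar $\lambda$ (absorbing it into $Y$ in the necessity direction, taking $\lambda=1$ in the sufficiency direction) written out explicitly and correctly.
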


\begin{proof}
The proof follows immediately by applying Corollary \ref{khsr} to $B=Y.$
\end{proof}

%********************************************
          \subsection{$\ldeux$ and quasi-affinity   to $S= S\adj \geq 0$ }\label{subsection1}
%********************************************

 In this subsection, for the convenience of the reader,  \emph{\text{$G$}-quasi-affinity} refers to \emph{quasi-affinity}  already mentioned in the introduction.
 The following lemma provides a  link between the $G$-quasi-affinity and the $|G|$-quasi-affinity to a nonnegative selfadjoint operator, which will be useful in Subsection \ref{subseection43}.

%_____________________________________________________________
\begin{lemma}\label{remquasiaffini}
Let $T \in LO(\h)$ be a densely defined operator. Then  the following assertions are equivalent:
\begin{enumerate}[(i)]\def\labelenumi{\rm(\roman{enumi})}
  \item  $\overline{GT G\inv}= {G\inv}\adj \tstar G\adj \geq 0$ for a quasi-affinity $G \in \bh;$
  \item $ \overline{ X\half T X\mhalf} = X\mhalf \tstar X\half \geq 0$ is selfadjoint for a quasi-affinity    $X  \in \lplus$.
\end{enumerate}
\end{lemma}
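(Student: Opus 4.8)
The plan is to reduce an arbitrary quasi-affinity to a nonnegative one by means of the polar decomposition, the key point being that a quasi-affinity $G\in\bh$ factors as $G=U|G|$ with $U$ \emph{unitary} and $|G|\in\lplus$ itself a quasi-affinity, and that conjugation by a unitary commutes with every operation appearing in the two statements: closure, adjoint, and the nonnegativity/selfadjointness relation. Thus the two conditions differ only by an inner unitary twist that can be stripped off.

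First I would dispose of the easy implication $\rm (ii)\Rightarrow(i)$. If $X\in\lplus$ is a quasi-affinity then so is $X\half$, and $X\half\in\bh$ satisfies $(X\half)\inv=X\mhalf$ and $(X\half)\adj=X\half$. Taking $G=X\half$ turns the equalities in (i) into $\overline{X\half T X\mhalf}={G\inv}\adj\tstar G\adj=X\mhalf\tstar X\half$, so (i) holds with this $G$ and the common operator is the same nonnegative selfadjoint operator as in (ii).

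For $\rm (i)\Rightarrow(ii)$ I would start from the polar decomposition $G=U|G|$, where $|G|=(G\adj G)\half$. Since $\kerr G=\{0\}$ one has $\kerr|G|=\{0\}$, so the partial isometry $U$ has initial space $(\kerr G)\orth=\h$; and since $\ranbar G=\h$ its final space is $\overline{\ran G}=\h$; hence $U$ is unitary. I then set $X:=G\adj G=|G|^2\in\lplus$, which is injective with dense range (because $\kerr X=\kerr G=\{0\}$), i.e. a quasi-affinity, and $X\half=|G|$. Using that $U$ is boundedly invertible, I would record the elementary identities $G\inv=X\mhalf U\adj$, $G\adj=X\half U\adj$ and ${G\inv}\adj=U X\mhalf$, from which
\[
GT G\inv = U\, X\half T X\mhalf\, U\adj, \qquad {G\inv}\adj \tstar G\adj = U\, X\mhalf \tstar X\half\, U\adj .
\]
It then remains to remove the unitary conjugation. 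For a unitary $U$ and a densely defined $A$ one has $\overline{UAU\adj}=U\overline{A}\,U\adj$, $(UAU\adj)\adj=UA\adj U\adj$, and $UAU\adj\geq 0$ if and only if $A\geq 0$; all three hold because $U\oplus U$ is a unitary on $\h\times\h$ carrying the graph of $A$ onto that of $UAU\adj$. Applying these to the two displayed expressions, hypothesis (i) becomes
\[
U\, \overline{X\half T X\mhalf}\, U\adj = U\, X\mhalf \tstar X\half\, U\adj \geq 0 ,
\]
and cancelling $U$ on the left and $U\adj$ on the right yields exactly $\overline{X\half T X\mhalf}=X\mhalf \tstar X\half\geq 0$, i.e. (ii).

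The main obstacle is the bookkeeping for adjoints, inverses and closures of products of the \emph{unbounded} operators $X\half,X\mhalf$ with the bounded invertible factor $U$: I would justify the three identities above via the standard rules $(SA)\adj=A\adj S\adj$ and, for $S$ boundedly invertible, $(AS)\adj=S\adj A\adj$, together with the graph-transformation argument. Everything else is routine, and in particular one should note that the common operator produced in (ii) is the unitary conjugate $U\adj(\,\cdot\,)U$ of the one in (i), so the equivalence is not merely of existence but exhibits an explicit correspondence.
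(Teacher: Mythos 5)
Your argument is correct and follows essentially the same route as the paper: the polar decomposition $G=U|G|$ with $U$ unitary, the choice $X=G\adj G$, the identity ${G\inv}\adj \tstar G\adj = U\,(X\mhalf \tstar X\half)\,U\adj$, and the removal of the unitary conjugation. You are somewhat more explicit than the paper in tracking the closure $\overline{GTG\inv}$ and the elementary identities for $G\inv$, $G\adj$, ${G\inv}\adj$, but the substance is identical.
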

%_____________________________________________________________

%
\begin{proof}
%--------------------------------------------------------
  Assume \rm{(i)} and   let $G= U |G|$ be the polar decomposition of $G.$ Since $G$ is a quasi-affinity, $U$ is unitary. Setting  $X:=G\adj G,$ one   sees that $X \in \lplus$ is a quasi-affinity and
  \begin{equation}\label{etoileuh}
    {G\inv}\adj \tstar G\adj  = U |G|\inv \tstar |G| U\adj = U (    X\mhalf \tstar X\half ) U\inv.
  \end{equation}
 As $U$ is unitary, one concludes from \eqref{etoileuh} and  $(i)$ that  $X\mhalf \tstar X\half \geq 0$ is   selfadjoint and hence $X\mhalf \tstar X\half = (X\mhalf \tstar X\half)\adj= \overline{X\mhalf \tstar X\half} \geq 0$. The reverse implication is immediate.
%--------------------------------------------------------
\end{proof}
%____________________________

The following theorem  establishes a connection between the class $\ldeux$ and the quasi-affinity to nonnegative selfadjoint operators.

%___________________________________________________________________________________________
%___________________________________________________________________________________________

\begin{proposition}\label{inclusionnfs}
  Let $T\in CO(\h) $ be densely defined. Then the following assertions are equivalent:
   \begin{enumerate}[(i)]\def\labelenumi{\rm(\roman{enumi})}
       \item  $\tstar$ is  quasi-affine to   some  $S = S\adj \geq 0;$
       %----------------------------------------------------------------------------
       \item $T \supseteq  AB \in \ldeux$  with $\ranbarA =\h;$
        %----------------------------------------------------------------------------
        \item there exists a quasi-affinity $X \in \lplus$   such that
             %-------------------------------------------------
     % \begin{equation}\label{fridri}
%         X\half \tstar X\mhalf \inclu  S_F  \inclu (X\half \tstar X\mhalf)\adj = X\mhalf T X\half.
%      \end{equation}
      %------------------------------------------------
        $$
         0 \leq  \tstar X\inv \inclu  X\inv  T \ifaf  0 \leq X\tstar \inclu TX.
         $$
   \end{enumerate}
\end{proposition}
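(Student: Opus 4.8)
The plan is to establish all equivalences through the cycle (ii)$\Rightarrow$(i), (i)$\Leftrightarrow$(iii) (including the internal equivalence asserted in (iii)), and finally (i)$\Rightarrow$(ii). For (ii)$\Rightarrow$(i) I would start from $T_0:=AB\inclu T$ with $A\in\lplus$, $\ranbarA=\h$, $B=B\adj\geq 0$. By Proposition \ref{theospectral}, $S:=A\half B A\half$ is nonnegative selfadjoint and $T_0$ is pre-similar to $S$, i.e.\ $T_0 A\half=A\half S$. Taking adjoints in this equality and using the product rule \eqref{tamer} for the bounded factor $A\half$ gives $SA\half=(A\half S)\adj=(T_0A\half)\adj\supseteq A\half T_0\adj$, so $A\half T_0\adj\inclu SA\half$. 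Since $T_0\inclu T$ yields $\tstar\inclu T_0\adj$, one obtains $A\half\tstar\inclu SA\half$. As $A$ has dense range, $G:=A\half$ is injective with dense range, hence a quasi-affinity, and $G\tstar\inclu SG$ says exactly that $\tstar$ is quasi-affine to $S=S\adj\geq 0$.

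For (i)$\Rightarrow$(iii), let $G$ be a quasi-affinity with $G\tstar\inclu SG$ and set $X:=G\adj G\in\lplus$; then $\ker X=\ker G=\{0\}$ and $\ranbar X=(\ker G)\orth=\h$, so $X$ is a quasi-affinity. Left-multiplying $G\tstar\inclu SG$ by $G\adj$ gives $X\tstar\inclu G\adj SG$, and $\la G\adj SGx,x\ra=\la SGx,Gx\ra\geq 0$ shows $X\tstar$ is symmetric and nonnegative. Taking adjoints in $G\tstar\inclu SG$ and using \eqref{tamer} gives $G\adj S\inclu (SG)\adj\inclu (G\tstar)\adj=TG\adj$, whence $X\tstar\inclu G\adj SG\inclu TG\adj G=TX$, which is the second inclusion in (iii). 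The internal equivalence $0\leq\tstar X\inv\inclu X\inv T\ifaf 0\leq X\tstar\inclu TX$ is then a symmetric computation: if $0\leq X\tstar\inclu TX$ and $y=Xw$ with $w\in\D\tstar$, then $y\in\DT$, $Ty=X\tstar w\in\ran X$, $X\inv Ty=\tstar w=\tstar X\inv y$ and $\la\tstar X\inv y,y\ra=\la X\tstar w,w\ra\geq 0$; the converse is identical.

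For (iii)$\Rightarrow$(i) I would define, on the dense domain $X\half(\D\tstar)$, the operator $S_0(X\half z):=X\half\tstar z$ for $z\in\D\tstar$; this is well defined since $X\half$ is injective, and $\la S_0X\half z,X\half z'\ra=\la X\tstar z,z'\ra$ together with $0\leq X\tstar$ shows $S_0$ is symmetric and nonnegative. Its Friedrichs extension $S=S\adj\geq 0$ satisfies $X\half\tstar\inclu SX\half$, so $\tstar$ is quasi-affine to $S$ through the positive quasi-affinity $X\half$, giving (i).

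The hard direction is (i)$\Rightarrow$(ii). Starting from a quasi-affinity $G$ with $G\tstar\inclu SG$, I would first reduce to a positive quasi-affinity via the polar decomposition $G=U|G|$ ($U$ unitary): with $X:=|G|^2=G\adj G$ and $\widetilde S:=U\adj SU=\widetilde S\adj\geq 0$ one gets $X\half\tstar\inclu\widetilde S X\half$ (cf.\ Lemma \ref{remquasiaffini}), and taking adjoints via \eqref{tamer} gives $X\half\widetilde S\inclu TX\half$, i.e.\ $X\half\widetilde SX\mhalf\inclu T$. The natural candidate is $A:=X\in\lplus$ (dense range) and $B:=X\mhalf\widetilde SX\mhalf$, so that $AB=X\half\widetilde SX\mhalf\inclu T$. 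The main obstacle is to show $B$ is genuinely \emph{selfadjoint} and nonnegative rather than merely symmetric: writing $B=(\widetilde S\half X\mhalf)\adj(\widetilde S\half X\mhalf)$, selfadjointness of $B$ is equivalent, through Lemma \ref{relationalpha}, \eqref{equalityn2}, to the closedness of $\widetilde S\half X\inv$ (equivalently of $\widetilde S\half X\mhalf$), which fails for a general positive $X$ and must be extracted from the hypotheses. I expect this to be forced by the closedness of $T$: the inclusion $X\half\widetilde SX\mhalf\inclu T$ with $T$ closed should, via Lemma \ref{relationalpha}, make the relevant fractional product closed, whence $B=B\adj\geq 0$ and $AB\inclu T$ is the required element of $\ldeux$. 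Securing this closedness is the crux of the argument.
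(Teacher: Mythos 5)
Your directions (ii)$\Rightarrow$(i), (i)$\Rightarrow$(iii) and (iii)$\Rightarrow$(i) are correct, and the first of these is a genuinely different (and rather clean) route: the paper never invokes Proposition~\ref{theospectral} here, instead going (ii)$\Rightarrow$(iii)$\Rightarrow$(i) by passing from $0\leq B\inclu A\inv T$ to $0\leq \tstar A\inv\inclu A\inv T$ and then taking the Friedrichs extension of $\tstar X\inv$. Your construction of $S$ in (iii)$\Rightarrow$(i) as the Friedrichs extension of $S_0(X\half z):=X\half\tstar z$ is also sound and slightly more direct than the paper's $S=\overline{X\half B_F^{1/2}}\,(X\half B_F^{1/2})\adj$.

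The genuine gap is in (i)$\Rightarrow$(ii), and it is exactly where you flag it. Your candidate $B=X\mhalf\widetilde S X\mhalf$ need not be selfadjoint, and the mechanism you hope for --- that closedness of $T$ forces closedness of $\widetilde S\half X\mhalf$ through the inclusion $X\half\widetilde SX\mhalf\inclu T$ --- does not work: a restriction of a closed operator to a smaller domain is in general not closed, so no closedness is transmitted downward through that inclusion, and indeed $X\mhalf\widetilde SX\mhalf$ is in general only a densely defined nonnegative symmetric operator. The missing idea is that one should not try to prove selfadjointness of this particular operator but replace it by a nonnegative selfadjoint \emph{extension}. Concretely, the paper rewrites $G\tstar\inclu SG$ as
\begin{equation*}
 \tstar (G\adj G)\inv \inclu G\inv S(G\inv)\adj=:B_0\geq 0,
\end{equation*}
notes that $\D B_0\supseteq A(\D\tstar)$ is dense (here $A:=G\adj G$ and $\ranbar A=\h$), and takes the Friedrichs extension $B_F$ of $B_0$. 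The crucial point is the \emph{direction} of the inclusion: since $\tstar A\inv\inclu B_0\inclu B_F$ and $B_F=B_F\adj$, taking adjoints gives $B_F\inclu(\tstar A\inv)\adj=A\inv T$, whence $AB_F\inclu T$. Had you only established $AB_0\inclu T$, enlarging $B_0$ to $B_F$ would destroy the inclusion; it is the adjoint-side inclusion $\tstar A\inv\inclu B_0$ that survives passage to the Friedrichs extension and then dualizes to what is needed. With this replacement your argument for (i)$\Rightarrow$(ii) closes, since $\overline{B_0}\inclu B_F$ and $\ranbar A=\h$ come for free from $G$ being a quasi-affinity.
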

%___________________________________________________________________________________________
%___________________________________________________________________________________________

\begin{proof}
$\mathrm{(i)} \Rightarrow \mathrm{(ii)} $ Assume that  $\tstar$ is $G$-quasi-affine to      $S =S\adj \geq 0.$  Then the inclusion $G \tstar \inclu S G$ implies that
\begin{equation}\label{retard}
  \tstar (G\adj G)\inv  \inclu G\inv S (G\inv)\adj := B_0 \geq 0
\end{equation}
%__________________
 and hence $\dombar B_0 = \ranbar G\adj G = \h.$    Now, let $B_F$  be the Friedrichs extension of $B_0$ (cf. \cite{kato1980}) and let $A= G\adj G \in \lplus.$  Then  \eqref{retard} shows that    $\tstar A\inv  \inclu   B_0 \inclu B_F$, and therefore $A B_F \inclu T$.   \\
    %-------------------------------------------------
      \hspace*{0.4cm}  $\mathrm{(ii)} \Rightarrow \mathrm{(iii)} $ Since $AB \inclu T \in \ldeux$ and $\ranbarA= \h,$ it follows that $\ker A= \{0\}$ and one has  $0 \leq B \inclu A\inv T.$ Hence,
       \begin{equation}\label{trin}
         0 \leq \tstar A\inv \inclu  (A\inv T)\adj \inclu B \inclu A\inv T = (\tstar A\inv)\adj.
       \end{equation}
      %_____________________
       By taking $X= A,$ one concludes that $0 \leq \tstar X\inv \inclu   X\inv T $ or, equivalently, $ X \tstar   \inclu   T X .$ Moreover, it follows from \eqref{trin} that
       $ X\tstar \inclu XBX \geq 0,$ which completes the proof of (iii).  \\
%        %-------------------------------------------------
  \hspace*{0.4cm}  $\mathrm{(iii)} \Rightarrow \mathrm{(i)} $ Since $\ranbar X= \h=\dombar \tstar$ it follows that $ \tstar X\inv \geq 0$ is a densely defined operator whose Friedrichs extension is again denoted by $B_F.$     Then
  $
   \tstar X\inv \inclu B_F
   $
   and   one has
   $$
   X\half \tstar  \inclu  (X\half B_F X\half) X\half  \inclu \left( \, \overline{X\half {B_F}\half} (X\half {B_F}\half)\adj \right) X\half.
   $$
   This proves that $\tstar$ is $X\half$-quasi-affine to $ S:=  \overline{X\half {B_F}\half} (X\half {B_F}\half)\adj  \geq 0.$
        \end{proof}

\begin{remark}\normalfont \label{rembf} In the proof of Proposition \ref{inclusionnfs} the operator $  B_0 $ in \eqref{retard} is nonnegative and densely defined. Hence the form generated by $B_0$ is closable and its closure has $B_F$, the Friedrichs extension, as the unique representing   operator given by
  \begin{equation}\label{BF}
    B_F = (G\inv S\half) \overline{S\half  {(G\inv)}\adj},
  \end{equation}
cf. \cite{kato1980}.
The proof also shows that if $B$ is any nonnegative selfadjoint extension of $B_0$ then (ii) holds and (iii) follows by taking $X=A.$
\end{remark}

The rest of this section is devoted to describe close relations  between Sebestyén inequality and  quasi-affinity to a nonnegative selfadjoint operator.

 \begin{corollary}\label{remsebsqua}
Let $T \in CO(\h)$ be a densely defined operator and let   $S=S\adj \geq 0.$    If  $\tstar$ is $G$-quasi-affine to $S$  such that $\rho ( \tstar B_F) \neq \vide$, then  there exists $\lambda >0$ such that
  $$\tstar T \leq \lambda \tstar B_F,
  $$
where $B_F$ is defined in \eqref{BF}.
\end{corollary}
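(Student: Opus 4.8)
The plan is to reduce Corollary \ref{remsebsqua} to the Sebestyén inequality of Theorem \ref{firsttheo01} (or more directly to Lemma \ref{lemprepthe}) by exploiting the $G$-quasi-affinity hypothesis together with the nonemptiness of $\rho(\tstar B_F)$. First I would invoke Proposition \ref{inclusionnfs}: since $\tstar$ is $G$-quasi-affine to $S=S\adj\geq 0$, the implication $\mathrm{(i)}\Rightarrow\mathrm{(ii)}$ of that proposition produces the factorization $A B_F \inclu T$ with $A=G\adj G\in\lplus$ and $\ranbarA=\h$, where $B_F$ is exactly the Friedrichs extension appearing in \eqref{BF} and \eqref{retard}. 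In particular one has the inclusion $\tstar A\inv \inclu B_F$ with $B_F=B_F\adj\geq 0$, so $\tstar B_F$ is a well-defined selfadjoint operator whose resolvent set is assumed nonempty.

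Next I would translate the inclusion $A B_F \inclu T$ into the desired operator inequality. The natural route is to show that this factorization, together with $A\in\lplus$, forces $\tstar T \leq \lambda\,\tstar B_F$ for a suitable $\lambda>0$. Since $A B_F \inclu T$ is precisely condition (ii) of Theorem \ref{firsttheo01} (with $X=A$ and $\overline{B_0}$ replaced by the selfadjoint $B_F$, for which $\overline{B_0}=B_F$ can be read off from Remark \ref{rembf}), the reverse implication $\mathrm{(ii)}\Rightarrow\mathrm{(i)}$ of that theorem hands back exactly $\tstar T \leq \lambda\,\tstar B_F$ with $\lambda=\n A\n$. Here the hypothesis $\rho(\tstar B_F)\neq\vide$ guarantees that $\tstar B_F$ is not merely selfadjoint but genuinely behaves as a comparison operator; I would use it to ensure that the factorization descends to the restriction $B_0=B_F\rest\D\tstar B_F$ and that $\tstar B_F=\tstar B_0=\tstar\overline{B_0}$ via \eqref{motivation}, so that Theorem \ref{firsttheo01} applies verbatim.

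The main obstacle I anticipate is bookkeeping about domains and the role of the Friedrichs extension rather than any deep analytic difficulty. Specifically, the inclusion coming from Proposition \ref{inclusionnfs} is $A B_F\inclu T$, not an equality, and Theorem \ref{firsttheo01} is stated for $\overline{B_0}$ with $B_0=B\rest\D\tstar B$; I must check that applying it to $B:=B_F$ is legitimate, i.e.\ that $\tstar B_F$ is selfadjoint (given) and that the equivalence in the theorem yields the inequality in item (i) from the inclusion in item (ii) without needing the extra core condition $\DT\inclu\D\overline{B_0}$. Since only the direction $\mathrm{(ii)}\Rightarrow\mathrm{(i)}$ is required here, and that direction follows immediately from Lemma \ref{lemprepthe}, \eqref{darkness1} applied to $X=A$ and $B=B_F$, the argument should go through cleanly: one obtains $\tstar T\leq c_1\,\tstar B_F$ with $c_1=\n A\n=\lambda$, which is the claim.

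The nonemptiness of $\rho(\tstar B_F)$ is used precisely to know that $\tstar B_F$ is a legitimate selfadjoint nonnegative operator against which the comparison $\tstar T\leq\lambda\,\tstar B_F$ makes sense in the form-ordering introduced earlier; without it the Friedrichs extension $B_F$ would still exist but the spectral content needed to write the clean inequality could fail. Thus the proof is essentially a two-line deduction: apply Proposition \ref{inclusionnfs} $\mathrm{(i)}\Rightarrow\mathrm{(ii)}$ to get $AB_F\inclu T$, then apply the implication $\mathrm{(ii)}\Rightarrow\mathrm{(i)}$ of Theorem \ref{firsttheo01} (equivalently Lemma \ref{lemprepthe}, \eqref{darkness1}) with $X=A$ to conclude $\tstar T\leq\lambda\,\tstar B_F$ with $\lambda=\n A\n$.
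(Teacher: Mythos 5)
Your overall route coincides with the paper's: use Proposition \ref{inclusionnfs} (together with Remark \ref{rembf}) to obtain $A B_F \inclu T$ with $A = G\adj G \in \lplus$, and then feed this inclusion into the implication $\rm (ii) \Rightarrow (i)$ of Theorem \ref{firsttheo01}. However, there is a genuine gap at the one point where the corollary actually requires work: Theorem \ref{firsttheo01} (and likewise \eqref{darkness1} of Lemma \ref{lemprepthe}) is applicable only under the standing hypothesis that $\tstar B$ is \emph{selfadjoint}, and for $B = B_F$ this is neither part of the corollary's hypotheses nor a consequence of $B_F = B_F\adj \geq 0$. You assert it twice without proof --- once as ``so $\tstar B_F$ is a well-defined selfadjoint operator'', deduced incorrectly from $\tstar A\inv \inclu B_F$ and the selfadjointness of $B_F$ alone, and once parenthetically as ``(given)''. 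In general the product of two nonnegative selfadjoint operators need not even be symmetric, so this step cannot be skipped.

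The paper closes exactly this gap: from $A B_F \inclu T$ one gets $\tstar \inclu (AB_F)\adj = B_F A$, hence
$$
\tstar B_F \inclu B_F A B_F = (A\half B_F)\adj A\half B_F \inclu (A\half B_F)\adj \overline{A\half B_F} =: M,
$$
where $M$ is nonnegative and selfadjoint, so $\tstar B_F$ is symmetric; only then does the hypothesis $\rho(\tstar B_F) \neq \vide$ enter, upgrading symmetry to selfadjointness. Your reading of the role of $\rho(\tstar B_F) \neq \vide$ --- that it makes the factorization ``descend to the restriction $B_0$'' or secures the identity $\tstar B_F = \tstar \overline{B_0}$ of \eqref{motivation} --- is off target, since \eqref{motivation} holds for arbitrary closed densely defined operators with no spectral assumption. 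The resolvent hypothesis is used exactly once, to pass from symmetric to selfadjoint. Once you insert the displayed chain of inclusions and this observation, the remainder of your argument (applying Theorem \ref{firsttheo01} with $B = B_F$, $X = A$, $\lambda = \n A \n$) is correct and agrees with the paper.
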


\begin{proof}
  Since $\tstar$ is $G$-quasi-affine to some $S=S\adj \geq 0,$ it follows from Proposition \ref{inclusionnfs} and Remark \ref{rembf} that $AB_F \inclu T$ with $B_F= (G\inv S\half) \overline{S\half  (G\inv)\adj}.$ Hence
  $$
  \tstar B_F \inclu  B_F A B_F= (A\half B_F)\adj A\half B_F  \inclu (A\half B_F)\adj \overline{A\half B_F}=:M \geq 0.
    $$
    Since $M$ is selfadjoint, $\tstar B_F$ is symmetric. On the other hand, $\rho( \tstar B_F) \neq \vide$ by assumption and therefore $\tstar B_F$ is selfadjoint, too.
    Together with the fact that $A  \overline{B_F\rest \D \tstar B_F}  \inclu A B_F \inclu  T $ this yields $T^*T \leq \lambda\, T^*B$ for some   $\lambda \geq 0$ by  Theorem \ref{firsttheo01}.
\end{proof}

Note that a small adjustment to item (i) of Proposition  \ref{inclusionnfs} allows
$T$  to be written as the product of two nonnegative, in general, unbounded  linear operators  motivating the following result.
\begin{proposition} \label{egalquasiaff}
  Let $T\in CO(\h) $ be densely defined. Then the following assertions are equivalent:
   \begin{enumerate}[(i)]\def\labelenumi{\rm(\roman{enumi})}
       \item  $\tstar$ is $G$-quasi-affine to  $S = S\adj \geq 0$ with $\D T \inclu  \D G\inv S\half \overline{S\half  (G\inv)\adj};$
       %----------------------------------------------------------------------------
       \item $T =  AB \in \ldeux$  with $\ranbarA =\h;$
        %----------------------------------------------------------------------------
        \item there exists a quasi-affinity $X \in \lplus$  such that
        $$
          X\inv T = \overline{\tstar X\inv}  \geq 0,
                   $$
                   where $ \D  X\inv T=\DT.$
   \end{enumerate}
\end{proposition}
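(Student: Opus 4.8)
The plan is to prove the three equivalences by leveraging \cref{inclusionnfs}, whose statement already provides the core machinery; the new content of \cref{egalquasiaff} is that under the additional domain hypothesis in (i) the \emph{inclusions} appearing in \cref{inclusionnfs} can be upgraded to \emph{equalities}. So I would structure the argument as three implications, reusing the constructions from the proof of \cref{inclusionnfs} verbatim and then checking that the extra domain conditions force equality of domains.

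For $\mathrm{(i)} \Rightarrow \mathrm{(ii)}$, I would start exactly as in \cref{inclusionnfs}: the $G$-quasi-affinity $G\tstar \inclu SG$ yields $\tstar (G\adj G)\inv \inclu B_0 := G\inv S (G\inv)\adj \geq 0$ with $\dombar B_0 = \h$, and I set $A = G\adj G \in \lplus$, which is injective with $\ranbarA = \h$. By \cref{rembf} the Friedrichs extension of $B_0$ is $B_F = (G\inv S\half)\,\overline{S\half (G\inv)\adj}$, so the hypothesis $\DT \inclu \D B_F$ in (i) is precisely $\DT \inclu \D G\inv S\half \overline{S\half (G\inv)\adj}$. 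From \cref{inclusionnfs} one already has $A B_F \inclu T$, i.e. $B_F \inclu A\inv T$. The extra ingredient is that $\DT \inclu \D B_F$ forces $\D A\inv T = \DT \inclu \D B_F$, and combined with $B_F \inclu A\inv T$ this gives $\D B_F = \DT = \D A\inv T$, hence $B_F = A\inv T$ and therefore $T = A B_F$ with $B = B_F = B\adj \geq 0$, proving (ii) with genuine equality.

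For $\mathrm{(ii)} \Rightarrow \mathrm{(iii)}$ and $\mathrm{(iii)} \Rightarrow \mathrm{(i)}$ I would follow the corresponding steps of \cref{inclusionnfs}, again tracking domains. From $T = AB$ with $A$ injective and $\ranbarA = \h$ one gets $B = A\inv T$ with $\D B = \D A\inv T = \DT$, so \eqref{trin} becomes a chain of equalities: $\tstar A\inv = (A\inv T)\adj = B = A\inv T$, whence $\overline{\tstar A\inv} = A\inv T \geq 0$ with $\D A\inv T = \DT$; taking $X = A$ gives (iii). For $\mathrm{(iii)} \Rightarrow \mathrm{(i)}$, since $X\inv T = \overline{\tstar X\inv} \geq 0$ is already selfadjoint, it \emph{is} its own Friedrichs extension, so I take $B_F := X\inv T = \overline{\tstar X\inv}$ and follow the last paragraph of \cref{inclusionnfs} to exhibit $\tstar$ as $X\half$-quasi-affine to $S = \overline{X\half {B_F}\half}(X\half {B_F}\half)\adj \geq 0$; the domain condition $\D T \inclu \D G\inv S\half \overline{S\half (G\inv)\adj}$ in (i) should drop out because here $G = X\half$ makes $B_F$ recover exactly $X\inv T$, whose domain equals $\DT$ by hypothesis.

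The main obstacle I anticipate is the bookkeeping in $\mathrm{(i)} \Rightarrow \mathrm{(ii)}$: one must verify that $\D A\inv T = \DT$ (not merely $\supseteq$), which requires that $A = G\adj G$ be injective with dense range so that $A\inv$ is an everywhere-defined-on-$\ran A$ operator composed with $T$ without shrinking the domain — this is where $\ranbarA = \h$ and $\ker A = \{0\}$ are used. A second delicate point is confirming that the Friedrichs extension is the \emph{correct} extension to produce equality rather than just an inclusion; since $B_F$ is the minimal-form closure and the hypothesis pins its domain to $\DT$, the two inclusions $B_F \inclu A\inv T$ and $\D A\inv T \inclu \D B_F$ must be shown to be compatible, i.e. that no strictly larger selfadjoint extension is secretly needed. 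Once these domain identifications are in place, all three implications reduce to the already-established inclusions of \cref{inclusionnfs} with "$\inclu$" replaced by "$=$".
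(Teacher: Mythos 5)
Your proposal is correct and follows essentially the same route as the paper: all three implications are obtained by rerunning the constructions of Proposition \ref{inclusionnfs} and using the extra domain hypotheses to turn the inclusions $AB_F\inclu T$, $B\inclu A\inv T$ and $\tstar X\inv\inclu B_F$ into equalities. The only place where the paper does visibly more work than you is the domain condition in $\mathrm{(iii)}\Rightarrow\mathrm{(i)}$, which you assert ``should drop out'': the paper verifies it explicitly by computing $X\mhalf S X\half = BX=\tstar$, hence $\tstar X\inv\inclu B_0:=X\mhalf S X\mhalf$, and then sandwiching $\overline{\tstar X\inv}\inclu\overline{B_0}\inclu B_0\adj\inclu\overline{\tstar X\inv}$ to identify the Friedrichs extension $B_F=X\mhalf S\half\,\overline{S\half X\mhalf}$ with $\overline{\tstar X\inv}=X\inv T$, whose domain is $\DT$ by hypothesis --- exactly the identification your sketch presupposes.
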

 \begin{proof}$\rm (i) \Rightarrow (ii)$
 Using the same arguments as in the proof of Proposition \ref{inclusionnfs}, one obtains $A B_F \inclu T.$
 On the other hand, $B_F = G\inv S\half \overline{S\half  (G\inv)\adj}$ by  Remark~\ref{rembf}  and hence the assumption   $\DT \inclu  \D B_F  $  yields $T = A B_F \in \ldeux.$ \\
    %-------------------------------------------------
      \hspace*{0.4cm}  $\mathrm{(ii)} \Rightarrow \mathrm{(iii)} $ For $X= A$  one has $ X\inv T = B = B\adj = (X\inv T)\adj = \overline{ \tstar X\inv} \geq 0.$ Moreover, $\DT = \DB = \D X\inv T.$\\
      %-------------------------------------------------
      \hspace*{0.4cm}  $\mathrm{(iii)} \Rightarrow \mathrm{(i)}$ Set $B= X\inv T.$ Then $XB =X X\inv T \inclu T$ and, since $\DT = \D B$ it follows that $T =XB \in CO(\h).$ Thus $X \half B$ is a  closed densely defined operator  by Lemma \ref{relationalpha} and hence
      \begin{equation}\label{piqrr}
         S:= X\mhalf T X\half = X\half B X\half = X\half B\half (X\half B\half)\adj  \geq 0,
      \end{equation}
      is a selfadjoint operator. Moreover, it follows from \eqref{piqrr} that
      $$
      S =  (X\mhalf T X\half)\adj \supseteq X\half \tstar X\mhalf,
      $$
      and therefore $X\half \tstar  \inclu S X\half.$ This proves that $\tstar$ is $X\half$-quasi-affine to $S.$ On the other hand, multiplying \eqref{piqrr} from the left by $X\mhalf$ and from the right by $X\half$ shows that
      $
      X\mhalf S X\half = BX = \tstar.
      $
       Thus $\tstar X\inv  \inclu X\mhalf S X\mhalf \geq 0,$ which implies  that $B_0:= X\mhalf S X\mhalf$ is a densely defined operator such that
       $$
       \overline{\tstar X\inv}  \inclu \overline{B_0} \inclu {B_0}\adj \inclu  \overline{\tstar X\inv}.
       $$
       Consequently, $ \overline{\tstar X\inv}  = \overline{B_0} = B_F,$ where $B_F = X\mhalf S\half  \overline{S\half  X\mhalf}  $ is the Friedrichs extension of $B_0$ and $\D B_F = \D \overline{\tstar X\inv} = \D X\inv T = \DT.$
 \end{proof}

 The reversed implication for Corollary \ref{remsebsqua} is established in the next result where a subclass of $\ldeux$ is characterized not only by Sebestyén  inequality but also by quasi-affinity to a nonnegative selfadjoint operator.

\begin{theorem}\label{coralama}
Let $T \in \coh$ be a densely defined operator with $\ranbarT =\h$ and let   $S=S\adj \geq 0.$ Then the following statements are equivalent:
\begin{enumerate}[(i)]\def\labelenumi{\rm(\roman{enumi})}
  \item $\tstar  T \leq \lambda \tstar B  $ with $  \D B \inclu  \DT \inclu \D \overline{B \rest \tstar B}$  for some $ \lambda \geq 0$ and $ B= B\adj \geq 0;$
  \item $T = A\, \overline{B \rest \tstar B} \in \ldeux$ with $ \ranbarA = \h;$
  \item $\tstar \text{ is } G\text{-quasi-affine to } S = S^* \geq 0  \text{ with }  \DT \inclu \D \overline{B_F \rest \D(\tstar B_F}),$ where $B_F=G\inv S\half \,  \overline{S\half  {(G\inv)}\adj}.$
\end{enumerate}
\end{theorem}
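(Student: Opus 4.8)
The plan is to prove the cycle $\mathrm{(i)}\Rightarrow\mathrm{(ii)}\Rightarrow\mathrm{(iii)}\Rightarrow\mathrm{(i)}$, using Theorem~\ref{firsttheo01} to pass between the Sebestyén inequality and the factorization $T=A\,\overline{B_0}$ (writing $B_0:=B\rest\D\tstar B$ throughout), and Proposition~\ref{egalquasiaff} together with Proposition~\ref{inclusionnfs} and Remark~\ref{rembf} to pass between that factorization and $G$-quasi-affinity. The one structural fact I would isolate first is the \emph{automatic core property} of any second factor of the form $\overline{B_0}$: by \eqref{motivation} one has $\tstar\overline{B_0}=\tstar B_0=\tstar B$, so $\D(\tstar\overline{B_0})=\D B_0$, and restricting $\overline{B_0}$ back to this domain returns $B_0$; taking closures gives $\overline{\,\overline{B_0}\rest\D(\tstar\overline{B_0})\,}=\overline{B_0}$. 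This identity is what keeps the three domain hypotheses aligned, and I expect it to absorb most of the bookkeeping.

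For $\mathrm{(i)}\Rightarrow\mathrm{(ii)}$ I would first observe that $\overline{B_0}\inclu B$ together with the chain $\D B\inclu\DT\inclu\D\overline{B_0}$ forces $\D B=\D\overline{B_0}$, hence $\overline{B_0}=B$; in particular the factor $\overline{B_0}$ is nonnegative and selfadjoint. By \eqref{motivation2} the inequality in (i) reads $\tstar T\leq\lambda\tstar\overline{B_0}$, and since $\DT\inclu\D\overline{B_0}$, the equivalence \eqref{secondseb} of Theorem~\ref{firsttheo01} gives $T=X\overline{B_0}$ for some $X\in\lplus$ with $\ker X=\ker\tstar$. As $\ranbarT=\h$ yields $\ker\tstar=\{0\}$, the operator $A:=X$ is injective, so $\ranbarA=\h$ and $T=A\overline{B_0}\in\ldeux$. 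For $\mathrm{(ii)}\Rightarrow\mathrm{(iii)}$ I would apply the implication $\mathrm{(ii)}\Rightarrow\mathrm{(i)}$ of Proposition~\ref{egalquasiaff} with $X=A$: its proof produces a selfadjoint $S=X\mhalf TX\half\geq0$, shows $\tstar$ is $X\half$-quasi-affine to $S$, and identifies the Friedrichs factor $B_F=X\mhalf S\half\,\overline{S\half X\mhalf}=\overline{\tstar X\inv}=X\inv T=\overline{B_0}$ with $\D B_F=\DT$. Since $B_F=\overline{B_0}$, the core property of the first paragraph gives $\overline{B_F\rest\D(\tstar B_F)}=B_F$, whence $\D\overline{B_F\rest\D(\tstar B_F)}=\D B_F=\DT$ and the domain hypothesis of (iii) holds (with equality).

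The substantive step is $\mathrm{(iii)}\Rightarrow\mathrm{(i)}$, which is exactly the reversal of Corollary~\ref{remsebsqua} with the spectral hypothesis $\rho(\tstar B_F)\neq\vide$ deleted. From $G$-quasi-affinity, Proposition~\ref{inclusionnfs} and Remark~\ref{rembf} give $AB_F\inclu T$ with $A=G\adj G\in\lplus$, $\ranbarA=\h$, and $B_F$ as in \eqref{BF}; in particular $\D B_F\inclu\DT$. The decisive manoeuvre is to feed the domain hypothesis of (iii) into this inclusion: because $\overline{B_F\rest\D(\tstar B_F)}\inclu B_F$, the chain $\D B_F\inclu\DT\inclu\D\overline{B_F\rest\D(\tstar B_F)}\inclu\D B_F$ collapses, forcing $\D B_F=\DT$, the core equality $\overline{B_F\rest\D(\tstar B_F)}=B_F$, and the upgrade of $AB_F\inclu T$ to $T=AB_F$. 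With this equality, Lemma~\ref{relationalpha} shows $A\half B_F$ is closed, so $\tstar B_F=(A\half B_F)\adj A\half B_F$ is nonnegative and selfadjoint with no resolvent assumption; reading $T=A\,\overline{B_F\rest\D(\tstar B_F)}$ through \eqref{secondseb} of Theorem~\ref{firsttheo01} then returns $\tstar T\leq\lambda\tstar B_F$ and $\DT\inclu\D B_F$, which, combined with $\D B_F\inclu\DT$ and the given inclusion $\DT\inclu\D\overline{B_F\rest\D(\tstar B_F)}$, is precisely item (i) with $B=B_F$.

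The hard part, and the step I would treat most carefully, is the selfadjointness of $\tstar B_F$ in $\mathrm{(iii)}\Rightarrow\mathrm{(i)}$. A priori $\tstar B_F$ is only symmetric—it sits inside the nonnegative selfadjoint operator $(A\half B_F)\adj\,\overline{A\half B_F}$—and in Corollary~\ref{remsebsqua} its selfadjointness came from $\rho(\tstar B_F)\neq\vide$. The point of the theorem is that the domain condition $\DT\inclu\D\overline{B_F\rest\D(\tstar B_F)}$ is exactly strong enough to force the equality $T=AB_F$, after which $A\half B_F$ is closed and $\tstar B_F$ becomes the genuine square $(A\half B_F)\adj A\half B_F$. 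Everything downstream—closedness of $A\half B_F$, selfadjointness of $\tstar B_F$, and the closing application of Theorem~\ref{firsttheo01}—depends on this domain collapse, so that is where I would concentrate the verification.
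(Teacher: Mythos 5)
Your proof is correct and follows essentially the same route as the paper's: both directions between (i) and (ii) are reduced to Theorem~\ref{firsttheo01} via the domain collapse $\D B = \D \overline{B\rest \D \tstar B}$, and both directions between (ii) and (iii) rest on Proposition~\ref{egalquasiaff} (equivalently, Proposition~\ref{inclusionnfs} with Remark~\ref{rembf}) together with the core identity $\overline{B_F\rest\D(\tstar B_F)}=B_F$. The only difference is organizational --- you close a cycle and pass from (iii) directly to (i) through Theorem~\ref{firsttheo01}, whereas the paper proves the two equivalences separately --- but the key lemmas and domain arguments coincide.
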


\begin{proof}
 $\mathrm{(i)} \ifaf \mathrm{(ii)} $
  %_____________________________
   Observe from \eqref{secondseb} in Theorem \ref{firsttheo01}   that $T = A\overline{B_0} \inclu AB,$ where $A \in \lplus$ and  $B_0= B \rest \D \tstar B.$ Since $\DB \inclu \DT$ it follows that $T=A\overline{B_0} = A B,$ and hence $\DB =  \D\overline{B_0}.$ This implies  that $\overline{B_0}= B = B\adj \geq 0,$ and hence $T \in \ldeux$ with   $\ranbar T = \ranbar A =   \h.$      The reverse implication follows  immediately  from Theorem \ref{firsttheo01} by choosing $B= \overline{B \rest \D \tstar B}.$  \\
  %_____________________________
  \hspace*{0.3cm} $\mathrm{(ii)} \ifaf \mathrm{(iii)} $
   %_____________________________
   Assume (ii). Then, it is clear  from Proposition \ref{egalquasiaff} that $\tstar $ is  quasi-affine to some $ S = S^* \geq 0$ with $ \DT \inclu \D B_F.$ Moreover, the proof of Proposition \ref{egalquasiaff} shows that $B_F =\overline{B \rest \D \tstar B} ,$ which completes the argument of the direct implication. To see the reverse implication, observe from the Proposition \ref{egalquasiaff} that  $T =  AB_F \in \ldeux$  with $\ranbarA =\h.$ Hence $\DT = \DB_F \inclu \D \overline{B_F \rest \D(\tstar B_F})$, and thus  $ B_F= \overline{B_F \rest \D(\tstar B_F})$ satisfies (ii).
\end{proof}

The next remark contains a variant of Theorem \ref{coralama} and  gives necessary and sufficient conditions for an operator $T$ with $\ranbarT =\h$  to be in $\ldeux.$

\begin{remark}
Let $T \in \coh$ be a densely defined operator with $\ranbarT =\h$  and let   $S=S\adj \geq 0.$ Then the following statements are equivalent:
\begin{enumerate}[(i)]\def\labelenumi{\rm(\roman{enumi})}
  \item $\tstar  T \leq \lambda_0 \tstar B \leq \lambda_1 B\adj  B  $ for some $ \lambda_0, \lambda_1 \geq 0$   with $  \DT \inclu \DB$, $\tstar B= B\adj T$     and $ B= B\adj \geq 0;$
  \item $T = AB \in \ldeux;$
  \item $\tstar \text{ is } G\text{-quasi-affine to } S = S^* \geq 0  \text{ with }  \DT \inclu \D B_F ,$ where $B_F=G\inv S\half \,  \overline{S\half  {(G\inv)}\adj}.$
\end{enumerate}
\end{remark}

Note that   once Corollary \ref{coralama} or Corollary \ref{remsebsqua}    is applied to $T,$ one would expect that  the quasi-affinity of $T$ to selfadjoint operators is connected to the Sebestyén inequality involving $T \tstar.$  However, it will be seen in \cref{section04} that   the reversed  inequality    is   more appropriate for such an approach and this will be achieved through a further study of linear relations, which will be discussed in the next section.

%_____________________________________________________________%
 \section{ Generalization to linear relations} \label{secrelation}

%_____________________________________________________________%

In this section an analog of Theorem \ref{firsttheo01} is established for the case where the operator inequality therein is reversed.
For this purpose it is helpful to first prove  Theorem \ref{firsttheo01}  in a bit more general context where $T$ and $B$ are not assumed to be densely defined and, in fact, they will also be allowed to be multivalued linear relations.
This needs some basic facts concerning ordering of semibounded selfadjoint relations; see \cite[Section~5.2]{behrndt2020boundary}
and e.g. \cite{Sandovi2013,hassi22014}.

Before stating the result, some key notions on  linear relations in Hilbert spaces are recalled; for further details, the reader is referred to \cite{cross, behrndt2020boundary, arens}.   A linear relation (relation) $T$ from $\h$  to  $\kk$ is a linear subspace of the Cartesian product $\h \times \kk$. It is uniquely determined by its graph $G(T)=\{ (x,y) \in \h \times \kk : \ x \in \D T , y \in Tx \}.$
Unless otherwise specified, the same notations, familiar for linear operators, will be used for linear relations.  The \emph{inverse} and the \emph{adjoint} of $T$ are respectively given by  $G(T\inv)=\{ (y,x) \ (x,y) \in G(T)\}$ and  $G(T\adj)=\{ (x,x') \in \kk \times \h ; \ \langle x',y \rangle = \langle x, y' \rangle \text{ for all } (y,y') \in G(T)\}$.
For a closed operator $T,$ the  operator part   is given by $T_s=P_{s}T$,
where $P_s$ stands for the orthogonal projection onto $(\mul T)^\perp=\dombar T^*$. Moreover,
  $T_s$ is   closed and $T$ decomposes as $T=T_s \,\widehat\oplus\, T_{\mul}$,
where $T_{\mul}=(\{0\} \times \mul T)$.\\
\hspace*{0.4cm}
If $\kk=\h$ and $\langle x',x \rangle \in \R $ for all $(x,x')\in G(T)$ then $T$ is said to be \emph{symmetric} or, equivalently, $T \inclu \tstar.$
 If  $ \langle x',x \rangle \in \rplus$ then $T$ is \emph{nonnegative} and one writes $T \geq 0.$   Moreover,  $T$ is \emph{selfadjoint} if $T = \tstar.$
 Note that, if $T=\tstar \geq 0$ then $ {\ts}\half := (T_s)\half= (T\half)_s$. For a closed linear relation $T$ the product $\tstar T$ is a nonnegative selfadjoint relation; see \cite[Lemma~1.5.8]{behrndt2020boundary}.
In particular, $T_s\subseteq T$ and $T^* \subseteq (T_s)^*$, so that
\begin{equation}\label{T*Teq}
  T^*T \subseteq (T_s)^*T = T^*P_s T = T^*T_s \subseteq (T_s)^*T_s
\end{equation}
and here all inclusions prevail as equalities, since $T^*T$ and $(T_s)^*T_s$ both are selfadjoint. Notice that if $T$ is a closed operator, which is not densely defined, then $T^*T$ is a selfadjoint relation with $\mul T^*T=(\DT)^\perp.$

\subsection{Sebestyén inequality for linear relations } \label{relsub1}

 The next result allows $T$ and $B$ to be closed linear relations such that the case of densely defined operators in  Theorem \ref{firsttheo01} is explicitly included in it. It should be pointed out that,    exactly as in the case of linear operators; cf. \eqref{motivation}, one has
 %__________________________
\begin{equation}\label{mdina}
  \tstar B = \tstar B \rest \D \tstar B = \tstar \overline{B \rest \D \tstar B},
\end{equation}
%__________________________
where $B \rest D := B \cap ( D \times \kk)$ denotes the restriction of the relation $B: \h \rightarrow \kk$ to a linear subspace $D \inclu \h.$

\begin{theorem}\label{relationtheo}
  Let $T, B: \h\rightarrow \kk$ be closed linear relations such that
$\mul B \subseteq \ker (\ts)\adj$ and   $\tstar B $ is selfadjoint.
Then, the following statements are equivalent  for $B_0:= B \rest \D \tstar B$ and for  some $\lambda \geq  0:$
 \begin{enumerate}\def\labelenumi{\rm(\roman{enumi})}
     \item $T^* T  \leq \lambda \, \tstar B ;$
     \item $  X \overline{B_0}\inclu T   $ has a solution $X \in \lplusk.$
          \end{enumerate}
 In this case, $X$ can be chosen such that $X\overline{B_0}  \inclu T_s$ and $ \ker (\ts)\adj \inclu \ker X$ with $\n X \n \leq \lambda.$ Moreover, in this case
      %--------------------------------
      \begin{equation}\label{hawnha}
        \tstar \overline{B_0} =  B_0 \adj X \overline{B_0}= B_0\adj T.
      \end{equation}
      %--------------------------------
 In particular, the following assertions are equivalent for some $\lambda \geq 0:$
   \begin{enumerate}[i)]\def\labelenumi{\rm(\roman{enumi})}\setcounter{enumi}{2}
     \item $T^* T  \leq \lambda \tstar \overline{B_0}$ and $\DT  \inclu \D \overline{B_0}$;
     \item $T=X\overline{B_0}\,\overset{.}{+}\, T_{\mul}$ has a solution $X \in \lplusk.$
      \end{enumerate}
      In this case, $X$ can be chosen such that $T_s=X\overline{B_s}$ and $\ker X =\ker (\ts)\adj.$
\end{theorem}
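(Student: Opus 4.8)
The plan is to transcribe the proof of Theorem~\ref{firsttheo01} into the relation setting, the two new features being the systematic use of the operator part $\ts$ of $T$ and the compatibility hypothesis $\mul B\inclu\ker(\ts)\adj$. The guiding identities are $T^\ast T=(\ts)\adj\ts$ from \eqref{T*Teq} and $\tstar B=\tstar\overline{B_0}$ from \eqref{mdina}: the first lets me phrase (i) entirely in terms of $\ts$, while the second lets me pass freely between $B$ and $\overline{B_0}$. The hypothesis $\mul B\inclu\ker(\ts)\adj$ is used only at the end, to force the constructed bounded operator $X$ to annihilate $\mul\overline{B_0}$, so that $X\overline{B_0}$ is single valued and in fact contained in $\ts$ rather than merely in $T$.

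For (i)$\Rightarrow$(ii) I would set $C:=\halfltsb$, a closed nonnegative selfadjoint relation with $C\adj C=\lambda\tstar B$, so that (i) reads $(\ts)\adj\ts\le C\adj C$. Applying the relation form of Lemma~\ref{lemprepthe} to the pair $(\ts,C)$ produces $G_0\in\bhk$ with $G_0 C\inclu\ts$, $\ran G_0\inclu\ranbar{\ts}$ and $\ker C\inclu\ker G_0$; here the range and kernel refinements come directly from the construction $G_0 Cx=\ts x$ in the proof of that lemma. Since $\ts\inclu T$ and $G_0\in\bh$, taking adjoints gives $\tstar\inclu(\ts)\adj\inclu C\,G_0\adj$ by \eqref{tamer}, and right multiplication by $B$ yields $\lambda\tstar B\inclu C\,\lambda G_0\adj B$. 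From this point I reproduce the Moore--Penrose computation \eqref{sfx001}--\eqref{celine}: multiplying on the left by $(\lambda\tstar B)^{(-\frac12)}$, restricting to the core $\D\tstar B$ of $\halftsb$ and closing gives $\halfltsb\supseteq\lambda G_0\adj\overline{B_0}$, whence $X\overline{B_0}\inclu G_0 C\inclu\ts\inclu T$ with $X:=\lambda G_0 G_0\adj\in\lplusk$ and $\n X\n\le\lambda$. The bound $\ran G_0\inclu\ranbar{\ts}$ gives $\ker X=(\ran G_0)\orth\supseteq(\ranbar{\ts})\orth=\ker(\ts)\adj$, and then $\mul\overline{B_0}\inclu\mul B\inclu\ker(\ts)\adj\inclu\ker X$ shows $X\overline{B_0}$ is single valued; since $\ran X\inclu\ranbar{\ts}\inclu(\mul T)\orth$, every value of $X\overline{B_0}\inclu T$ must lie in $(\mul T)\orth$, forcing $X\overline{B_0}\inclu\ts$.

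The reverse implication (ii)$\Rightarrow$(i) together with the identities \eqref{hawnha} follows from Lemma~\ref{lemprepthe} (via \eqref{darkness1}, \eqref{darkness2}) applied to $\overline{B_0}$, using $\tstar\overline{B_0}=\tstar B$. For the ``in particular'' equivalence (iii)$\ifaf$(iv): assuming (iii), the identity $\tstar\overline{B_0}=\tstar B$ reduces it to (i), so (i)$\Rightarrow$(ii) gives $X\overline{B_0}\inclu\ts$; the extra hypothesis $\DT\inclu\D\overline{B_0}$ then forces equality, since $\D(X\overline{B_0})=\D\overline{B_0}\supseteq\DT=\D\ts\supseteq\D(X\overline{B_0})$, so the two closed operators coincide and $T=\ts\,\widehat\oplus\,T_{\mul}=X\overline{B_0}\,\overset{.}{+}\,T_{\mul}$. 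Conversely, if $T=X\overline{B_0}\,\overset{.}{+}\,T_{\mul}$ then $\ts=X\overline{B_0}$ is closed, so $X\half\overline{B_0}$ is closed by Lemma~\ref{relationalpha} and $\tstar\overline{B_0}=(X\half\overline{B_0})\adj X\half\overline{B_0}\ge0$ is selfadjoint with $\D(\tstar\overline{B_0})\half=\D\overline{B_0}=\DT$; then $T\adj T=(\ts)\adj\ts=\overline{B_0}\adj X^2\overline{B_0}\le\n X\n\,\tstar\overline{B_0}$, which is (iii). Finally $\ker X=\ker(\ts)\adj$ follows by combining $\ker(\ts)\adj\inclu\ker X$ with $\ran\ts\inclu\ran X$, and $\ts=X\overline{B_s}$ holds because $X$ annihilates the multivalued part, allowing $\overline{B_0}$ to be replaced by $\overline{B_s}$.

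The step I expect to demand the most care is transplanting the Moore--Penrose manipulation \eqref{choftek001}--\eqref{celine} to relations: one must check that $(\lambda\tstar B)^{(-\frac12)}$ composes with $\halfltsb$ to the projection $\ppa$ on the correct domain, that $\D\tstar B$ is a core for $\halftsb$ as a relation, and that the successive closures behave as in the operator case. The second delicate point is the bookkeeping that pushes $X\overline{B_0}$ into the operator part $\ts$ rather than merely into $T$; this is precisely where the hypothesis $\mul B\inclu\ker(\ts)\adj$ is indispensable.
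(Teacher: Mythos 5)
Your proposal is correct and follows essentially the same route as the paper: the contraction $G_0$ from $\ran(\tstar B)^{\half}_s$ onto $\ranbar \ts$, the adjoint/Moore--Penrose manipulation on the core $\D \tstar B$, the choice $X=\lambda G_0G_0\adj$, and the reduction of (iii)$\ifaf$(iv) to the first equivalence all match the paper's argument. The only slight misplacement is your remark that the hypothesis $\mul B\inclu\ker(\ts)\adj$ enters ``only at the end'': the paper needs it already inside the Moore--Penrose step, via $\mul B\inclu\ker(\ts)\adj\inclu\ker G_0\adj$, to get $G_0\adj B=G_0\adj B_s$ single-valued before inverting $(\tstar B)_s\half$ --- but since your construction carries $\ran G_0\inclu\ranbar{\ts}$, this ingredient is available exactly where you would need it.
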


\begin{proof}
  Observe that  item $(i)$  is equivalent to $(\ts)\adj \ts \leq \lambda \halftsb_s \halftsb_s ,$ and hence the formula
          $$
            \begin{array}{rcl}
                G: \ran   (\tstar B)^{\half}_s & \longrightarrow & \ran T_s \\
                 \halfltsbs  f               & \longmapsto &  T_s f,  \quad f \in \D (\tstar B)^{\frac{1}{2}}_s,
           \end{array}
           $$
   defines a contractive operator from $\ran (\tstar B)^{\half}_s$ into $\ran\ts,$ since
   $$
   \n G  \halfltsbs f \n^2 =
    \n  T_s f \|^2 \leq \n \halfltsbs  f \|^2
    $$
    for all $f \in \D   \halftsbs.$
   Moreover, $G $ can be extended to an operator $G_0 \in \bhk$ such that
      %_______________________________________
   \begin{eqnarray}\label{formulakerg}
     \ker( G_0 )\supseteq  ( \ran     \halftsbs )\orth
      = & \ker     \halftsbs  \oplus \mul\tstar B  \nonumber \\
      = & \hspace*{-0.5cm} \kerr   \tstar B \oplus \mul\tstar B
   \end{eqnarray}
   %_______________________________________
   and    $\ranbar G_0 \inclu \ranbar \ts $, which is equivalent to
   $\ran (G_0)\adj \inclu \ranbar(\tstar B)_s$ and
    \begin{equation}\label{correc}
      \kerr   G_0\adj \supseteq\kerr   (\ts)\adj=  \kerr   \tstar \oplus \mul T.
    \end{equation}
      %_______________________________________
   %_______________________________________
   Thus,
   \begin{equation}\label{otherprof}
     G_0   \halfltsbs \inclu T_s.
   \end{equation}
  As $T$  is closed, $\ts$ is also closed and $\ts \inclu T.$ Hence,
  \begin{equation*}\label{other5}
  \tstar \inclu (\ts)\adj \inclu      \halfltsb   G_0\adj
  \end{equation*}
  which implies that
  \begin{equation}\label{other55}
  \lambda \tstar B \inclu \halfltsb \lambda G_0\adj B.
 \end{equation}
  By assumption $\mul B \inclu \ker (\ts)\adj$, and hence $\mul \tstar B = \mul \tstar T = \mul \tstar.$    On the other hand,   $\mul B \inclu \ker (\ts)\adj \inclu \ker   G_0\adj$ (see \eqref{correc}), so
       \begin{equation} \label{other0}
         G_0\adj B   = G_0\adj  (B_s  \overset{.}{+} B_{\mul})
                          =    G_0\adj  B_s.
        \end{equation}
            This yields by \eqref{other55} that
            %
            %__________________________
                 \begin{equation} \label{other}
                  (\lambda  \tstar B)_s \inclu    \halfltsbs G_0\adj B_s.
                   \end{equation}
                   %
              %_______________________________________
        Multiplying \eqref{other} from the left by the Moore-Penrose inverse $  (\tstar B)_s^{(-\frac{1}{2})}$  gives
            %-------------------------------
                 \begin{align*}
                           Q_s (I \rest \D \halftsbs)   \halfltsbs & \inclu   Q_s I \rest \D \halftsbs   \lambda  G_0\adj B_s\\
                                                           & \inclu  \lambda Q_s    G_0\adj B_s = \lambda G_0\adj B_s,
                \end{align*}
               %-------------------------------
               where $Q_s$ is the orthogonal  projection onto $\ranbar (\tstar B)_s.$
               Consequently,
              \begin{equation*}
                \halfltsbs \rest \D \tstar B \inclu \lambda   G_0\adj B_s,
              \end{equation*}
              and hence
              $
                  \halfltsbs| \D \tstar B  = \lambda  G_0\adj B_s \rest \D \tstar B.
              $
             Since $\D \tstar B $ is a core for $ \halftsbs$,   one gets
               \begin{equation*}
               \halfltsbs =   \lambda \overline{G_0\adj B_s \rest \D \tstar B} \supseteq \lambda  G_0\adj \overline{ B_s \rest \D \tstar B}.
              \end{equation*}
              Together with \eqref{otherprof} and \eqref{other0}  this implies that
             \begin{equation}\label{ghayth}
              \lambda G_0 G_0\adj \overline{B_0}= \lambda  G_0 G_0\adj \overline{ B_s \rest \D \tstar B} \inclu  G_0  \halfltsbs \inclu \ts
               \end{equation}
            and, in particular,
              $$
                \lambda G_0 G_0\adj \overline{ B_0} \inclu  T.
                $$
         This proves (ii) for $X=  \lambda  G_0 G_0\adj \in \lplusk.$

                 \hspace*{0.4cm}  The inclusion $\kerr T\adj \inclu \ker (\ts)\adj  \inclu \kerr X$ follows  from \eqref{correc}  by fact that  $\ker (G_0)\adj= \ker X$.

               For the reverse implication $\rm(ii) \Rightarrow (i)$, observe that
   $$
   \tstar \overline{B_0} \inclu B_0\adj X \overline{B_0} \inclu (X\half \overline{B_0})\adj \overline{X\half \overline{B_0}},
   $$
    and since  $ \tstar \overline{B_0} =\tstar B $ is selfadjoint also   $B_0\adj X \overline{B_0}$    is   selfadjoint. Thus,
   \begin{equation}\label{9rib}
      \tstar \overline{B_0}= B_0\adj X \overline{B_0}.
  \end{equation}
  Now, $X\half \overline{X\half \overline{B_0} } \inclu \overline{X\overline{B_0}} \inclu T$ and the same argument that was used in the proof of  Lemma \ref{lemprepthe} shows that for  $\lambda = \n X \n $  one has
  $$
  \tstar T \leq \lambda  (X\half B)\adj \overline{X\half \overline{B_0}} = \lambda  B_0\adj X\overline{B_0}  = \lambda \tstar \overline{B_0},
  $$
  and (i) is proved.

 %-----------------
To complete the proof of  \eqref{hawnha}  observe that
 $$
    B_0\adj T \inclu (\tstar \overline{B_0})\adj= \tstar \overline{B_0} = B_0\adj X \overline{B_0} \inclu B_0\adj T,
 $$
  %------------------
  and hence $B_0\adj T= B_0\adj X \overline{B_0} =\tstar \overline{B_0}.$\\
   \hspace*{0.4cm}  For the proof of the equivalence $\rm(iii) \ifaf (iv)$, it suffices to observe that item (iv) is   equivalent to  $
  X \overline{B_0} \inclu \ts  $ and $ \DT = \D \overline{B_0},$ and conclude the result from the first part of the proof. In this particular case, one easily sees that $\ts =X \overline{B_0} $ and hence $(\ts)\adj = B_0\adj X$, which leads to  $\ker X \inclu \ker  (\ts)\adj \inclu \ker X.$
\end{proof}

As seen in Remark \ref{remark1}, one obtains from \eqref{hawnha} the following inequality
$$
 \tstar B \leq  \mu  (B_0)\adj   \overline{B_0}, \qquad \mu = \n X \n,
$$
which implies that $\D \overline{B_0} \inclu \D \halftsb \inclu \DT.$  Some further properties of  $T$ and $B$ are collected in  the next remark.
 \begin{remark}
 \normalfont Under the assumptions of Theorem \ref{relationtheo} the following further statements hold:
\begin{enumerate}
 \item $ (\ts)\adj B =(T_s)^*B_s.$
 %_________________________________________________________________________
 \item    $ \mul T^*B = \mul \tstar  $ (equivalently, $ \dombar \tstar B=\dombar T$);
 %_________________________________________________________________________
 \item if $ \overline{B_0}= B$ then $\mul T \cap \D B\adj \inclu \ker B\adj \inclu \ker (B_s)\adj;$
 %_____________________________________________________________________
\item  As noted above $T^*T=(T_s)^*T_s$; cf. \eqref{T*Teq}. Likewise, if $ \overline{B_0}= B$ then
      \begin{equation}\label{abersab}
        (\ts)\adj B_s  =  (B_s)\adj \ts,
      \end{equation}
   which implies that
      \begin{equation}\label{abersab2}
        \tstar B = (\ts)\adj B= (\ts)\adj B_s=  (B_s)\adj X  B_s,
      \end{equation}
      %--------------------------------
      where $X \in \lplus.$
%____________________________________________________
\item If $ \overline{B_0}= B$ then the first item of Theorem \ref{relationtheo} can be written with the operator part of $T$:
      $$
      (\ts)\adj \ts \leq \lambda \, (\ts)\adj \overline{B_0}.
      $$
      \end{enumerate}
      %----------------------------------------
 \hspace*{0.4cm} Indeed, the identity  \rm (1) follows easily  from the inclusion $\mul B \inclu \ker (\ts)\adj=  \ker  \tstar \oplus \mul T$ which implies that
        \begin{equation*}
        (\ts)\adj B =T^* P_s \left(B_s \,\widehat\oplus\, B_{\mul}\right)= T^*P_s B_s =(T_s)^*B_s.
       \end{equation*}
      %----------------------------------------
       \hspace*{0.4cm}  To see  \rm (2),    apply   \rm{(1)} to get
\begin{equation*}
 \mul T^* \inclu \mul T^*B\subseteq \mul (T_s)^*B = \mul (T_s)^*B_s=\mul T^*.
\end{equation*}
Hence   $\mul T^*=\mul  T^*B$ or, equivalently, $ \dombar T  = \dombar \tstar B.$\\
 %----------------------------------------
 %----------------------------------------
       \hspace*{0.4cm} For the proof of \rm (3), observe that $\mul B\adj \inclu \mul B\adj T \inclu \mul (\tstar B)\adj =  \mul \tstar B \adj.$ On the other hand,    $\mul \tstar B  \inclu \mul B\adj,$ by   Remark \ref{remark1} (i).   Hence, $ \mul B \adj = \mul B\adj T,$ which means that
       $$ \mul T \cap \D B\adj \inclu \ker B\adj \inclu \ker (B_s)\adj.
       $$
       \hspace*{0.4cm} For the proof of \rm (4), observe that   $X B = X B_s \inclu \ts$ together with \eqref{hawnha} and item (1)  yields
       %----------------------------------------
       \begin{equation}\label{bye1}
         (\ts)\adj B_s = (\ts)\adj B  \inclu (X B)\adj B = B\adj X B = \tstar B  \inclu (T_s)\adj B_s.
       \end{equation}
       This means that $(\ts)\adj B_s$ is selfadjoint and, moreover,
      %----------------------------------------
       \begin{equation}\label{bye2}
       (\ts)\adj B_s \inclu  (B_s)\adj X B_s \inclu  (B_s)\adj \ts \inclu ( (\ts)\adj B_s)\adj = (\ts)\adj B_s.
         \end{equation}
        %----------------------------------------
        A combination of \eqref{bye1} and \eqref{bye2} shows \eqref{abersab} and \eqref{abersab2}.\\
        %----------------------------------------
        %----------------------------------------
        %----------------------------------------
       \hspace*{0.4cm}  To see (5), observe from \eqref{abersab2} and \eqref{mdina} that
       $$
       (\ts)\adj B= \tstar B = \tstar \overline{B_0} \inclu (\ts)\adj \overline{B_0}  \inclu  (\ts)\adj B,
       $$
       which implies that  $\tstar B =  (\ts)\adj \overline{B_0}.$ Together with \eqref{T*Teq}, this implies that
       $$
      T^* T  \leq \lambda \, \tstar B  \ \ifaf  (\ts)\adj \ts \leq \lambda \, (\ts)\adj \overline{B_0}.
      $$
\end{remark}

%________________________________________________________
\subsection{Characterization of the reversed inequality} \label{subrelinv}
%________________________________________________________

The following result  shows that reversing Sebestyén inequality yields  a new nonnegative, in general,  unbounded solution $X$ with $X\inv \in \lplus$ rather than a bounded one as seen in Theorem \ref{firsttheo01} and Theorem \ref{relationtheo}. This  motivates the study of a new unbounded product   of nonnegative operators; see \cref{section04}.

%__________________________________________________________________
\begin{theorem}\label{corsebestyeninclusion}
Let $\kk$ be a complex Hilbert space and $T,B:\h \rightarrow \kk $ be closed linear relations such that $B\adj T$ is selfadjoint and  let  $B_0 := B\adj \cap ( \kk \times \ran B\adj T ).$ If  $\ker B\adj \subseteq \ker T^* \oplus \mul T$ then the following assertions are equivalent for some $\eta > 0:$
   \begin{enumerate}[(i)]\def\labelenumi{\rm(\roman{enumi})}
   \item  $T^* T  \geq \eta  \,  \overline{B_0} T \geq 0;$
   %  \item $T^* T  \geq \eta B\adj T \geq 0$ with  $\ran B\adj \inclu \ran (B\adj T)\half$;
     \item $T \inclu Y B_0\adj $ has a solution $Y\inv \in \lplusk.$
      \end{enumerate}
      In this case, $Y$ can be chosen such that $\ker \tstar \oplus \mul T  \inclu \mul Y$ and
      \begin{equation}\label{agosolo}
    B\adj T=  \overline{B_0} T  =  \overline{B_0} Y {B_0}^{*} =  \tstar {B_0}^{*}
      \end{equation}
      In particular, the following statements are equivalent for some $\eta >0:$
   \begin{enumerate}[i)]\def\labelenumi{\rm(\roman{enumi})}\setcounter{enumi}{2}
     \item $T^* T  \geq \eta \,  \overline{B_0} T$ with $\ran{\tstar} \inclu    \ran \overline{B_0};$
     \item $\tstar = \overline{B_0}  Y  \overset{.}{+}  ( \ker \tstar \times \{0\})$ has a solution $Y\inv \in \lplusk.$
      \end{enumerate}
      In this case, $\mul T \oplus \ker \tstar = \mul Y.$
      \end{theorem}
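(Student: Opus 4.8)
The plan is to derive this reversed statement directly from \cref{relationtheo} by passing to inverses, thereby turning the reversed inequality into an ordinary Sebestyén inequality. First I would set $\widehat{T}:=(\tstar)\inv$ and $\widehat{B}:=(B\adj)\inv$; both are closed linear relations from $\h$ to $\kk$, being inverses of the closed relations $\tstar$ and $B\adj$. Since $T$ is closed, $\widehat{T}\adj=((\tstar)\inv)\adj=(\overline{T})\inv=T\inv$, and hence $\widehat{T}\adj\widehat{B}=T\inv(B\adj)\inv=(B\adj T)\inv$, which is selfadjoint precisely because $B\adj T$ is. Moreover $\mul\widehat{B}=\mul(B\adj)\inv=\ker B\adj$, while $\ker(\widehat{T}_s)\adj=\ker\widehat{T}\adj\oplus\mul\widehat{T}=\mul T\oplus\ker\tstar$; so the standing hypothesis $\ker B\adj\inclu\ker\tstar\oplus\mul T$ is exactly the condition $\mul\widehat{B}\inclu\ker(\widehat{T}_s)\adj$ demanded in \cref{relationtheo}. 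Thus $(\widehat{T},\widehat{B})$ satisfies all the hypotheses of that theorem.

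Next I would identify the restricted relation produced by \cref{relationtheo}, namely $\widehat{B}_0:=\widehat{B}\rest\D(\widehat{T}\adj\widehat{B})$. Because $\D\big((B\adj T)\inv\big)=\ran(B\adj T)$, one gets $\widehat{B}_0=(B\adj)\inv\rest\ran(B\adj T)=B_0\inv$, and therefore $\overline{\widehat{B}_0}=(\overline{B_0})\inv$. Applying \cref{relationtheo} to $(\widehat{T},\widehat{B})$ then gives, with $\lambda=1/\eta$, the equivalence of $\widehat{T}\adj\widehat{T}\leq\lambda\,\widehat{T}\adj\widehat{B}$ with the solvability of $\widehat{X}\,\overline{\widehat{B}_0}\inclu\widehat{T}$ by some $\widehat{X}\in\lplusk$, together with its identities \eqref{hawnha}.

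It then remains to translate both items back. For item (i), note that $\widehat{T}\adj\widehat{T}=(\tstar T)\inv$ and $\widehat{T}\adj\widehat{B}=(B\adj T)\inv=(\overline{B_0}T)\inv$, the last equality being the inverse of the core identity \eqref{mdina} for $(\widehat{T},\widehat{B})$ and yielding at once the first identity $B\adj T=\overline{B_0}T$ of \eqref{agosolo}. Since the inverse is order-reversing on nonnegative selfadjoint relations, $(\tstar T)\inv\leq\lambda(\overline{B_0}T)\inv$ is equivalent to $\tstar T\geq\eta\,\overline{B_0}T\geq0$, which is (i); the nonnegativity $\overline{B_0}T\geq0$ is the inverse of the nonnegative selfadjoint relation appearing in \eqref{hawnha}. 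For item (ii), I would set $Y:=\widehat{X}\inv$, so that $Y\inv=\widehat{X}\in\lplusk$ and $Y\adj=Y$: taking inverses in $\widehat{X}(\overline{B_0})\inv\inclu(\tstar)\inv$ gives $\overline{B_0}\,Y\inclu\tstar$, and taking adjoints gives $T\inclu Y B_0\adj$, which is (ii). The remaining identities $\overline{B_0}T=\overline{B_0}Y B_0\adj=\tstar B_0\adj$ of \eqref{agosolo} follow by inverting the chain \eqref{hawnha}, and the inclusion $\ker\tstar\oplus\mul T\inclu\mul Y$ is the dual of $\ker(\widehat{T}_s)\adj\inclu\ker\widehat{X}$. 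Finally, the equivalence (iii)$\,\ifaf\,$(iv) together with $\mul Y=\mul T\oplus\ker\tstar$ is obtained in the same way from the corresponding part of \cref{relationtheo}, the domain condition $\D\widehat{T}\inclu\D\overline{\widehat{B}_0}$ translating into the range condition $\ran\tstar\inclu\ran\overline{B_0}$.

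The main obstacle is the relation-theoretic bookkeeping: one must justify each passage $(AB)\inv=B\inv A\inv$, $(S\adj)\inv=(S\inv)\adj$, $\overline{S\inv}=(\overline{S})\inv$ and the behaviour of inclusions under inverse (preserved) and adjoint (reversed) for possibly nondensely defined multivalued relations, and---most essentially---invoke the antitonicity of the inverse for the order $\leq$ on nonnegative selfadjoint relations (cf. the ordering results cited in \cref{secrelation}) in order to convert the reversed inequality into the direct Sebestyén inequality for $(\widehat{T},\widehat{B})$. Once these identities are secured, the reduction to \cref{relationtheo} is essentially automatic.
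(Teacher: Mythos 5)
Your proposal is correct and takes essentially the same route as the paper: the paper likewise sets $S:=(\tstar)\inv$ and $A:=(B\adj)\inv$ (your $\widehat{T}$, $\widehat{B}$), checks that the hypotheses of \cref{relationtheo} transfer, applies that theorem, and then converts the inequality back via the antitonicity of the inverse on nonnegative selfadjoint relations (citing \cite[Lemma 3.3]{hassi2006form} and \cite[Corollary 5.2.8]{behrndt2020boundary}) and by taking inverses and adjoints, exactly as you describe, including the translation of the domain condition $\D \widehat{T}\inclu\D\overline{\widehat{B}_0}$ into $\ran\tstar\inclu\ran\overline{B_0}$ for the equivalence of (iii) and (iv).
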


\begin{proof}
First observe that
\begin{eqnarray}   \label{jamlaaadi}
B\adj T =  B_0 T = \overline{B_0} T
\end{eqnarray}
is   selfadjoint.
Now, let  $S:={\tstar}\inv$  and $A:=(B\adj)\inv$. Then, $S$ and $A$ are two closed linear relations such that
 $
(S\adj A)\adj = ( T\inv ({B}\adj)\inv)\adj = ({({B}\adj T)\adj})\inv=({B}\adj T)\inv= S\adj A$
and the assumption $\ker {B}\adj \inclu  \ker T^* \oplus \mul T$ is equivalent to $\mul{A} \inclu \mul{S} \oplus \ker{S\adj}  = \ker (S_s)\adj.$
Now,  using Remark \ref{remark1} (iii), one can apply  Theorem  \ref{relationtheo} to $S$ and $A$ which yields  the following equivalences for   $\lambda > 0$ and $A_0:= A \rest \D S\adj A$:  %
\begin{enumerate}
   \item $S^* S  \leq \lambda \, S\adj \overline{A_0};$
   %  \item $S^* S  \leq \lambda \, S\adj A$ and $ \D{A} \inclu \D  ({A\adj} S)\half $;
     \item $  X \overline{A_0} \inclu S   $ has a solution $X \in \lplusk,$  $X \neq 0,$
\end{enumerate}
where  $X$ can be chosen such that $\ker (S_s)\adj  \inclu \ker X$  and
 %--------------------------------
     \begin{equation}\label{jamlaaadi2}
         S\adj \overline{A_0} =  A_0 \adj X \overline{A_0}= A_0\adj S  .
     \end{equation}
 %--------------------------------
Equivalently,  $ \mul T   \oplus \ker \tstar \inclu \mul Y $   for $Y = X\inv .$  By taking inverses  the equalities \eqref{jamlaaadi2} can be rewritten as
     $$
    \overline{B_0} T  =  \overline{B_0} Y {B_0}^{*} =  \tstar {B_0}^{*}
     $$
using the fact that $ B_0=   \left( A_0 \right)\inv .$
Combining this with \eqref{jamlaaadi} proves \eqref{agosolo}.
Next, using \cite[Lemma $3.3$]{hassi2006form}, or \cite[Corollary 5.2.8]{behrndt2020boundary} one has the following equivalence for some $\eta= \frac{1}{\lambda} > 0:$
\begin{enumerate}
   \item $(S^* S)\inv  \geq  \eta \, (S\adj \overline{A_0})\inv ;$
   \item $  (X \overline{A_0})\inv =\overline{A_0}\inv X\inv  \inclu S\inv   $ has a solution $X \in \lplusk,$ $X \neq 0.$
\end{enumerate}
By taking adjoints in (2) this equivalence can be rewritten as 
\begin{enumerate}
   \item $ \tstar T   \geq  \eta \,  \overline{B_0} T ;$
   \item $  T \inclu Y B_0\adj  $ has a  solution $Y\inv \in \lplusk.$
\end{enumerate}
Next, to see the equivalence $\rm (iii) \ifaf (iv)$, observe that \rm (iii) is equivalent to $S^* S  \leq \lambda \, S\adj \overline{A_0}$ and $\D S \inclu \D \overline{A_0},$ which is equivalent to $S= X \overline{A_0} \overset{.}{+} S_{\mul},$ by  Theorem \ref{relationtheo}. This last identity can be rewritten in the form
%___________________________________________
\begin{equation*}\label{chbilia}
  (\tstar)\inv = Y\inv {\overline{B_0}}^{\, -1}  \overset{.}{+} ( \{ 0 \} \times \ker \tstar )
 \ifaf  \tstar = \overline{B_0} Y \overset{.}{+} ( \ker \tstar  \times  \{ 0 \}).
\end{equation*}
Furthermore, it follows from Theorem   \ref{relationtheo} that $\ker S\adj \oplus \mul S = \ker X,$ which means that $\mul T \oplus \ker \tstar = \mul Y.$
\end{proof}

The following result is   analogous to the first items of Remark \ref{remark1} and Remark \ref{remarko5ra}.

\begin{remark}\normalfont
Under the assumptions of Theorem \ref{corsebestyeninclusion}, one obtains from  \eqref{agosolo} the following  implication
%
%_____________________
\begin{eqnarray}
&T \inclu Y B_0\adj  \text{ has a solution } Y\inv \in \lplusk
    \nonumber \\
  %______________________
& \Downarrow\label{iko}   \\
%______________________
& \hspace*{2cm} B\adj T \geq \mu  \overline{B_0} {B_0}\adj, \quad \mu = \tfrac{1}{ \n Y\inv \n}. \nonumber
\end{eqnarray}
In particular, if (the graph of) $B_0$ is a core of $B\adj,$ i.e.  $\overline{B_0}=B\adj,$  then the converse implication in \eqref{iko} holds, i.e.,
\begin{eqnarray}
&T \inclu Y B  \text{ has a solution } Y\inv \in \lplusk
    \nonumber \\
  %______________________
&  \Updownarrow \label{iko2}   \\
& \hspace*{1.6cm} B\adj T \geq \mu    {B}\adj B, \quad \mu = \tfrac{1}{ \n Y\inv \n}. \nonumber
\end{eqnarray}
\end{remark}

\begin{remark}\normalfont
The equivalence stated in  \eqref{iko2} can also be established    under conditions different  from those given in Theorem  \ref{corsebestyeninclusion}, in particular, when  $B\adj T \geq 0$ is selfadjoint,  $\mul T \inclu \ker (B_s)\adj$ and  $\D B\adj T$ is a core for the operator part $B_s. $
To see this, it suffices reverse the roles of $B$ and $T$ in Remark \ref{remarko5ra} and observe that $ T \inclu YB \ifaf Y\inv T \inclu B$ for any $Y \inv \in \lplusk.$
\end{remark}

The following corollary treats  a  particular case of Theorem  \ref{corsebestyeninclusion}, the case of densely defined operators with dense ranges.

\begin{corollary}\label{inversesebestyen}
Let $\kk$ be a complex Hilbert space and $T,B:\h \rightarrow \kk$ be closed densely defined linear operators    such that    $B\adj T$ is selfadjoint and $\overline{B_0}=B\adj .$
If $\ranbar T = \ranbar B=\h $  then the following assertions are equivalent for some $\eta > 0:$
\begin{enumerate}[(i)]\def\labelenumi{\rm(\roman{enumi})}
   \item  $T^* T  \geq \eta \, B\adj T \geq 0 $ with  $\ran{\tstar} \inclu    \ran B\adj;$
   \item $ \tstar =   B\adj \, Y $ has a solution $Y\inv \in \lplusk.$
\end{enumerate}
\end{corollary}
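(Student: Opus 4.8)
The plan is to derive the corollary as a direct specialization of Theorem \ref{corsebestyeninclusion}, in which the various multivalued and kernel corrections appearing there all collapse because $T$ and $B$ are genuine operators with dense ranges. First I would record the simplifications forced by the hypotheses: since $T$ and $B$ are single-valued, $\mul T=\{0\}$, and the density of the ranges gives $\ker \tstar=(\ran T)\orth=\{0\}$ and $\ker B\adj=(\ran B)\orth=\{0\}$. In particular the standing hypothesis $\ker B\adj \subseteq \ker \tstar \oplus \mul T$ of Theorem \ref{corsebestyeninclusion} reduces to $\{0\}\subseteq\{0\}$ and so holds trivially, which means that theorem applies to the pair $(T,B)$ with $B_0=B\adj \cap (\kk \times \ran B\adj T)$.

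Next I would read off the equivalence $\mathrm{(iii)}\ifaf\mathrm{(iv)}$ of Theorem \ref{corsebestyeninclusion} under the standing assumption $\overline{B_0}=B\adj$. Condition (iii), namely $\tstar T \geq \eta\,\overline{B_0}T$ with $\ran \tstar \inclu \ran \overline{B_0}$, becomes precisely item (i): $\tstar T \geq \eta\, B\adj T$ with $\ran \tstar \inclu \ran B\adj$. Here the nonnegativity $B\adj T\geq 0$ recorded in (i) is not an extra requirement but is implicit in the inequality itself, since the order relation $\leq$ is defined only between nonnegative selfadjoint operators, so that writing $\eta\, B\adj T \leq \tstar T$ already entails $B\adj T \geq 0$. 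Condition (iv), namely $\tstar=\overline{B_0}Y \,\overset{.}{+}\,(\ker \tstar \times \{0\})$, simplifies at once: the added relation $\ker \tstar \times \{0\}$ is trivial because $\ker \tstar=\{0\}$, and $\overline{B_0}=B\adj$, so (iv) reduces to item (ii), $\tstar=B\adj Y$ with $Y\inv \in \lplusk$. Thus the equivalence $\mathrm{(iii)}\ifaf\mathrm{(iv)}$ is exactly the asserted equivalence $\mathrm{(i)}\ifaf\mathrm{(ii)}$.

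To close the argument I would verify its internal consistency. Taking adjoints in $\overline{B_0}=B\adj$ gives $B_0\adj=(B\adj)\adj=B$, so the identity chain \eqref{agosolo} reads $B\adj T=\overline{B_0}Y B_0\adj=B\adj Y B$, which is nonnegative and reconfirms $B\adj T\geq 0$ on the side of (ii); moreover the auxiliary conclusion $\mul Y=\mul T \oplus \ker \tstar=\{0\}$ of Theorem \ref{corsebestyeninclusion} shows that the solution $Y$ is in fact a (nonnegative, generally unbounded) selfadjoint operator, consistent with the operator equation $\tstar=B\adj Y$. Since the proof is purely a specialization, I do not expect a genuine obstacle; the only point demanding care is the bookkeeping that turns the relation-theoretic statements of Theorem \ref{corsebestyeninclusion} into operator statements, i.e. checking that every kernel and multivalued correction vanishes and that the core hypothesis $\overline{B_0}=B\adj$ propagates through the adjoint to $B_0\adj=B$, which is what links the nonnegativity in (i) to the factorization in (ii).
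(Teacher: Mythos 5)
Your proposal is correct and matches the paper's intent exactly: the paper offers no separate proof, presenting the corollary as the specialization of the equivalence $\mathrm{(iii)}\ifaf\mathrm{(iv)}$ in Theorem \ref{corsebestyeninclusion} to densely defined operators with dense ranges, which is precisely what you carry out. Your bookkeeping (the vanishing of $\mul T$, $\ker \tstar$, $\ker B\adj$, and hence of the correction term $\ker\tstar\times\{0\}$, together with $\overline{B_0}=B\adj$) is exactly the reduction the paper has in mind.
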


%--------------------------------------------------
\section{The class $\lldeux$ and the  reversed   inequality} \label{section04}
%----------------------------------------------------

  In this section, the emphasis will be on the following class
 %------------------------------------------
  \begin{equation} \label{class2}
    \lldeux:=\{ T=AB ; \ A\inv \in \lplus,\ B= B\adj \geq 0 \}
  \end{equation}
  %----------------------------------------
as a modification of the class $\ldeux.$ In \eqref{class2},  $A$ is invertible, i.e., belongs to the class   $Gl(\h)$ of closed densely defined  injective and onto  operators on $\h.$ Denote by     $GL(\h)$  the set of all bounded everywhere defined invertible operators and, moreover, one      has $\gplus:= GL(\h) \cap \lplus$ and   $Gl^+(\h):=\{ S \in Gl(\h); \ S= S\adj \geq 0\}.$ Note that $ S \in Gl^+(\h)$ if and only if $S$ is a nonnegative selfadjoint operator with $\ran S= \h.$

The simpler case where $T$ belongs to $\lldeux  \cap  \ldeux$  will be treated in \cref{wsimilsec} and involves  weak similarity as well as   similarity  to nonnegative selfadjoint operators, while the general case, treated in \cref{subsectionlldeux}, is rather connected to   quasi-affinity and quasi-similarity to nonnegative selfadjoint operators. These notions will appear to be significantly related to the reversed inequality treated in \cref{subrelinv}.\\

%-----------------------------------------------------
\subsection{Similarity and $\mathcal{W}$-similarity to $S=S\adj \geq 0$ } \label{wsimilsec}
%-----------------------------------------------------

An operator $T \in LO(\h)$ is said to be $\mathcal{W}$-\emph{similar}  to $S \in LO(\h)$ if there exists $G \in GL(\h)$   such that $$
GT \inclu SG.
$$
If $TG=GS$  then $T$ is \emph{similar} to $S.$  In particular, if $T$ is similar to a normal  operator then it is said to be \emph{scalar}; see    \cite{bade1954unbounded, dunfordthree,scalar} for general background on scalar operators.  The next proposition  characterizes  $\mathcal{W}$-similarity to nonnegative selfadjoint operators with non-empty resolvent set.

\begin{proposition}\label{theosimilarscalar}
   Let $T \in CO(\h)$ be a densely defined linear operator. Then, the following assertions are equivalent:
   \begin{enumerate}\def\labelenumi{\rm(\roman{enumi})}
       \item $T$ is $\mathcal{W}$-similar to a  nonnegative selfadjoint  operator $S$ with $\rho(T) \neq \vide;$
       \item $XT=  \tstar X$, where $X \in \gplus$ and $\sigma(T) \inclu \rplus$;
       \item   $T= X_1 B_1  \ $  with $X_1 \in \gplus$ and $B_1=B_1\adj \geq 0$    (respectively,  $\tstar=X_2B_2$ with $X_2 \in \gplus$ and $B_2=B_2\adj \geq 0);$
       \item $T=BX$,  where $B=B \adj \geq 0 $ and $X \in \gplus$
           (respectively, $\tstar= B' Y$ with $B'=(B')\adj \geq 0$ and $Y \in \gplus$);
       \item There exist $W, Z \in \gplus$  such that   $T W =(TW)\adj \geq 0$ (respectively, $Z T=(ZT)\adj \geq 0);$
       \item $T$ is a scalar operator and $\sigma(T) \inclu \rplus.$
   \end{enumerate}
    If one of the above conditions holds, then
    %----------------------------
       \begin{equation}\label{plusdot}
          \ranbarT  \overset{.}{+} \kerT=\h.
       \end{equation}
     %----------------------------
\end{proposition}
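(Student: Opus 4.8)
The plan is to prove the block of equivalences by a single cycle $\mathrm{(i)}\Rightarrow\mathrm{(ii)}\Rightarrow\mathrm{(iii)}\Rightarrow\mathrm{(vi)}\Rightarrow\mathrm{(i)}$, to attach $\mathrm{(iv)}$ and $\mathrm{(v)}$ via $\mathrm{(iii)}\ifaf\mathrm{(iv)}\ifaf\mathrm{(v)}$, and finally to read off $\ranbarT \overset{.}{+} \kerT=\h$ from the similarity to a nonnegative selfadjoint operator produced along the way. The parenthetical ``respectively'' statements will come for free: the argument in fact shows that $T$ is \emph{similar} (not merely $\mathcal W$-similar) to some $S=S\adj\ge0$, via $T=VSV\inv$ with $V$ bounded and boundedly invertible; then $\tstar=V\adj S(V\adj)\inv$ is similar to the same $S$, so every condition holds for $\tstar$ as well.

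The crux is $\mathrm{(i)}\Rightarrow\mathrm{(ii)}$. From $GT\inclu SG$ with $G\in GL(\h)$ and $S=S\adj\ge0$, I would first upgrade $\mathcal W$-similarity to genuine similarity using $\rho(T)\ne\vide$. Since $\sigma(S)\inclu\rplus$, the open set $\rho(T)$ cannot sit inside $\rplus$ (which has empty interior in $\C$), so there is $\mu\in\rho(T)\cap\rho(S)$. From $G(T-\mu)\inclu(S-\mu)G$, composing on the left with $(S-\mu)\inv$ and on the right with $(T-\mu)\inv$ yields the bounded intertwining $G(T-\mu)\inv=(S-\mu)\inv G$; comparing ranges and using that $G$ is a bijection gives $G\,\DT=\D S$ and hence the equality $GT=SG$, so $\sigma(T)=\sigma(S)\inclu\rplus$. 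Taking adjoints gives $\tstar G\adj=G\adj S$, whence $\D\tstar=G\adj\,\D S$; setting $X:=G\adj G\in\gplus$ one computes $XT=G\adj SG=\tstar G\adj G=\tstar X$, the two domain identities promoting the a priori inclusion to the equality $XT=\tstar X$. This is exactly where the resolvent hypothesis does all the work.

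Next, $\mathrm{(ii)}\Rightarrow\mathrm{(iii)}$ and the factorizations. Given $XT=\tstar X$ with $X\in\gplus$ and $\sigma(T)\inclu\rplus$, I would consider $B:=X\half T X\mhalf$. Using $\tstar=XTX\inv$ one sees $B$ is symmetric; being closed with $\sigma(B)=\sigma(T)\inclu\R$ forces $\pm i\in\rho(B)$, so $B=B\adj$, and $\sigma(B)\inclu\rplus$ gives $B\ge0$. Then $T=X\inv(X\half B X\half)$ exhibits $\mathrm{(iii)}$ with $X_1:=X\inv\in\gplus$ and $B_1:=X\half B X\half\ge0$ (a boundedly invertible congruence of a selfadjoint operator is selfadjoint), while $T=(X\mhalf B X\mhalf)X$ exhibits $\mathrm{(iv)}$; the same congruence converts $\mathrm{(iii)}$ and $\mathrm{(iv)}$ into one another, and $\mathrm{(iv)}\ifaf\mathrm{(v)}$ is immediate on identifying the symmetric factor with $TW$, $W\in\gplus$. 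Finally $\mathrm{(iii)}\Rightarrow\mathrm{(vi)}$: from $T=X_1B_1$ the congruence $T=X_1\half(X_1\half B_1 X_1\half)X_1\mhalf$ makes $T$ similar to the nonnegative selfadjoint $S:=X_1\half B_1 X_1\half$, hence scalar with $\sigma(T)=\sigma(S)\inclu\rplus$; and $\mathrm{(vi)}\Rightarrow\mathrm{(i)}$ since a normal operator with spectrum in $\rplus$ is nonnegative selfadjoint, so a scalar $T$ with $\sigma(T)\inclu\rplus$ is similar, a fortiori $\mathcal W$-similar, to such an operator, with $\rho(T)\ne\vide$.

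For the decomposition, each condition yields (as above) a bounded boundedly invertible $V$, namely $V=X_1\half$, with $T=VSV\inv$ and $S=S\adj\ge0$. Since $\h=\ker S\oplus\ranbar S$ orthogonally and $V$ is a homeomorphism, one obtains $\kerT=V\ker S$ and $\ranbarT=V\,\ranbar S$, so that $\kerT\overset{.}{+}\ranbarT=V(\ker S\oplus\ranbar S)=\h$, the sum being direct because $V$ is injective. I expect the genuine obstacle to be only the first step $\mathrm{(i)}\Rightarrow\mathrm{(ii)}$: converting the one-sided inclusion $GT\inclu SG$ into an honest similarity together with a positive intertwiner, where $\rho(T)\ne\vide$ must be leveraged precisely through the resolvent-intertwining identity and the ensuing range comparison; everything else reduces to congruence manipulations and the standard criterion that a closed symmetric operator with real spectrum is selfadjoint.
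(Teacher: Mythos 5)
Your proof is correct and, apart from one step, follows the same route as the paper: the congruence manipulations with $X^{\pm\frac12}$ that drive $\mathrm{(ii)}\Rightarrow\mathrm{(iii)}\Leftrightarrow\mathrm{(iv)}\Leftrightarrow\mathrm{(v)}\Rightarrow\mathrm{(vi)}\Rightarrow\mathrm{(i)}$ and the final decomposition \eqref{plusdot} are essentially identical to the published argument. The one place where you genuinely diverge is $\mathrm{(i)}\Rightarrow\mathrm{(ii)}$. The paper notes that $GTG\inv\inclu S=S\adj\inclu(GTG\inv)\adj$ makes $GTG\inv$ symmetric with $\rho(GTG\inv)=\rho(T)\neq\vide$, hence selfadjoint, hence equal to $S$ by maximality of selfadjoint operators; you instead pick $\mu\in\rho(T)\cap\rho(S)$ (which exists because $\rho(T)$ is open and nonempty while $\sigma(S)\inclu\rplus$ has empty interior), derive the everywhere-defined intertwining $G(T-\mu)\inv=(S-\mu)\inv G$, and compare ranges to obtain $G\,\DT=\D S$ and thus $GT=SG$. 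Both arguments exploit $\rho(T)\neq\vide$ in the same essential way; the paper's version is shorter, while yours makes explicit where the domain equality comes from and avoids invoking the criterion that a closed symmetric operator with nonempty resolvent set is selfadjoint. One cosmetic slip: from $T=VSV\inv$ one gets $\tstar=(V\adj)\inv S V\adj$, not $V\adj S(V\adj)\inv$; this does not affect your conclusion that $\tstar$ is similar to the same $S$, which is also how the paper's proof yields the ``respectively'' statements.
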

\begin{proof}
      $\rm (i) \Rightarrow (ii)$
       Since $T$ is $\mathcal{W}$-similar to a nonnegative operator $S,$ there exists $G \in GL(\h)$ such that $GT\inclu SG.$ Hence,
       %--------------------------------------
       $$
       GT G\inv \inclu S = S\adj   \inclu (GT G\inv)\adj,
       $$
      %--------------------------------------
     which shows that $GT G\inv$ is symmetric. As $G\in GL(\h),$ one then has  $\rho(GT G\inv)=\rho(T) \neq \vide,$  and therefore
     %--------------------------------------
      \begin{equation}\label{smeh1}
       GT G\inv = S = (GT G\inv)\adj= {G\inv}\adj \tstar G\adj.
       \end{equation}
      %--------------------------------------
   This yields that
      $$
     G\adj G T  =  \tstar G\adj G,
      $$
   and the statement follows by taking $X=G\adj G \in \lplus.$ Furthermore, $(\ref{smeh1})$ shows that $\sigma(T)= \sigma(GT G\inv)= \sigma(S) \inclu \rplus.$ \\
        \hspace*{0.3cm}
       %-----------------------------------------------------------
        %-----------------------------------------------------------
       $\rm (ii) \Rightarrow (iii)$
        Let $T = X\inv  \tstar X, $ $ X \in \gplus,$ and assume that $\sigma(T) \inclu \rplus.$  Then,
        $
        X\half T X\mhalf =  X\mhalf \tstar X\half,
        $
        and hence
         $$
        ( X\half T X\mhalf)\adj=  X\mhalf \tstar X\half= X\half T X\mhalf.
        $$
Since $X  \in GL(\h)$ and $ \sigma(T) \inclu \rplus,$ it follows that
$
\sigma(X\half T X\mhalf)=\sigma(T) \inclu \rplus,
$
and therefore
\begin{equation}\label{eq2boundedinv}
S:=X\half T X\mhalf = X\mhalf \tstar X\half  =S\adj \geq 0.
\end{equation}
 Thus,
 $$
   B_1:=  X\half S  X\half= X\half  (X\half T X\mhalf ) X\half= X T = {B_1}\adj \geq 0
    $$
    and  $T =  X\inv  B_1 =X_1 B_1$, where $X_1=X\inv $ is invertible.   \\
 %---------------------------------------------------------
 %------------------------------------------------------
 \hspace*{0.4cm}  To prove the remaining statement, observe from $(\ref{eq2boundedinv})$ that
   $$
B_2:=  X\mhalf S  X\mhalf= X\mhalf ( X\mhalf \tstar X\half) X\mhalf =X\inv \tstar= B_2\adj  \geq 0$$
and    $\tstar=XB_2$ with $X$ invertible. \\
          %-----------------------------------------------------------
     \hspace*{0.4cm} The equivalence $\rm (iii) \Leftrightarrow (iv)$ is direct.\\
    %    -----------------------------------------------------------
          \hspace*{0.4cm}     $\rm (iii) \Rightarrow (v)$     Assume that
          $T=X_1B_1 $ with $X_1 \in \gplus.$ Then, for $Z:= {X_1}\inv \in \gplus $ one has     $ZT= B_1= {B_1}\adj= (ZT)\adj \geq 0.$
         \\        %
        \hspace*{0.4cm}  Similarly,    $\tstar=X_2 B_2\in \ldeux$ with $ X_2\in \gplus$ and $W:= X_2\inv \in \gplus$ yield that  $T=B_2X_2$ and  $T W =B_2= B_2\adj=(TW)\adj \geq 0.  $\\
         %-----------------------------------------------------------
         %-----------------------------------------------------------
        \hspace*{0.4cm} $\rm (v) \Rightarrow (vi)$
        Assume that there exists $W \in \gplus$  such that  $S_0=T W={S_0}\adj  \geq 0.$
           Then, $ W\mhalf S_0 W\mhalf \geq 0,$ $W\half \in GL(\h)$, and one has
                 $$
                  W\half ( W\mhalf S_0 W\mhalf )  =  T W\half .
                 $$
                Similarly if   $Z \in \gplus$ such that $S_1=Z T = {S_1}\adj \geq 0,$ then $Z\mhalf \in GL(\h),$ $Z\mhalf S_1  Z\mhalf = (Z\mhalf S_1  Z\mhalf )\adj \geq 0$  and
                 $$
                 T Z\mhalf = Z\mhalf (Z\mhalf S_1  Z\mhalf).
                 $$
                In both cases, one concludes that $T$ is similar to a nonnegative selfadjoint  operator   and    $\sigma(T) =\sigma(S_0)=\sigma(S_1) \inclu \rplus $. By definition $T$ is a scalar operator.\\
                %-----------------------------------------------------------
               % -----------------------------------------------------------
          \hspace*{0.4cm} $\rm (vi) \Rightarrow (i)$ If $T$ is a scalar operator with  $\sigma(T) \inclu \rplus$ then it is easily seen that it is similar, and hence $\mathcal{W}$-similar to a nonnegative selfadjoint operator. \\
      \hspace*{0.4cm} If one of the above conditions holds, then $T$ is   similar to $S=S\adj$ and $(\ref{plusdot})$ follows directly from  $\ranbar S \overset{.}{+} \ker S=\h$.
       \end{proof}

\begin{remark} \normalfont  Note that in Proposition \ref{theosimilarscalar}, the similarity and the $\mathcal{W}$-similarity to a nonnegative  selfadjoint operator are the same, cf. \eqref{smeh1}.
\end{remark}

\subsection{$\lldeux$ and quasi-affinity to $S=S\adj \geq 0$}\label{subsectionlldeux}

Recall that in \Cref{subsection1}, the quasi-affinity of $\tstar$ to a nonnegative selfadjoint operator $S$   is described  through elements $T$ in $\ldeux$. Unlike in the case of bounded operators, the quasi-affinity of $\tstar$ to $S$ does not  imply the one of $T$. The latter will rather  be described by elements of $\lldeux$ in the  following theorem.

\begin{theorem}\label{TBAsurjective}
   Let $T\in CO(\h) $ be densely defined. Then the following statements are equivalent:
   \begin{enumerate}\def\labelenumi{\rm(\roman{enumi})}
       \item   $T$ is  quasi-affine to some $S=S\adj \geq 0;$
         %---------------------------------------------------------------------------
       \item $T \inclu B A  \in \lldeux ;$
        %---------------------------------------------------------------------------
        \item there exists a quasi-affinity $X \in \lplus$  such that
         $$
      0 \leq X T  \inclu  \tstar X.
        $$
   \end{enumerate}
\end{theorem}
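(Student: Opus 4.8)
The plan is to prove the cyclic chain $\mathrm{(i)} \Rightarrow \mathrm{(iii)} \Rightarrow \mathrm{(ii)} \Rightarrow \mathrm{(i)}$, mimicking the three step argument of Proposition \ref{inclusionnfs} but replacing the bounded nonnegative factor by its (unbounded) inverse; this is precisely the passage that converts the class $\ldeux$ into $\lldeux$. The only genuinely new ingredient compared with Proposition \ref{inclusionnfs} is that the nonnegative selfadjoint operator occurring in the factorization will be produced as a Friedrichs extension of a densely defined nonnegative symmetric operator.

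For $\mathrm{(i)} \Rightarrow \mathrm{(iii)}$ I would start from a quasi-affinity $G \in \bh$ with $GT \inclu SG$ and set $X := G\adj G \in \lplus$; since $G$ is injective with $\ranbar G = \h$, the operator $X$ is injective with dense range, hence itself a quasi-affinity. Left multiplying $GT \inclu SG$ by $G\adj$ gives $XT \inclu G\adj S G$, and for $x,y \in \DT$ the identities $\la XTx, x\ra = \la SGx, Gx\ra \geq 0$ and $\la XTx, y\ra = \la Gx, GTy\ra = \la x, XTy\ra$ show that $XT$ is nonnegative and symmetric. As $X \in \bh$ one has $(XT)\adj = \tstar X$, so symmetry yields $0 \leq XT \inclu \tstar X$, which is $\mathrm{(iii)}$.

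For $\mathrm{(iii)} \Rightarrow \mathrm{(ii)}$ I would put $B := X\inv$, a nonnegative selfadjoint operator with $B\inv = X \in \lplus$, and let $A$ be the Friedrichs extension of the densely defined nonnegative symmetric operator $XT$ (cf.\ \cite{kato1980}), so that $A = A\adj \geq 0$ and $XT \inclu A$. Using $X\inv X = I$ one obtains $T = X\inv (XT) \inclu X\inv A = BA$, and since $B\inv \in \lplus$ and $A = A\adj \geq 0$ this realizes $T \inclu BA \in \lldeux$. For the last step $\mathrm{(ii)} \Rightarrow \mathrm{(i)}$, starting from $T \inclu BA \in \lldeux$ with $B\inv \in \lplus$ and $A = A\adj \geq 0$, I would take the quasi-affinity $G := B\mhalf \in \lplus$; then $B\half B\mhalf = I$ gives $GT = B\mhalf T \inclu B\half A$, and letting $S$ be the Friedrichs extension of the nonnegative symmetric operator $B\half A B\half$ one gets $SG \supseteq B\half A B\half B\mhalf = B\half A \supseteq GT$, i.e.\ $GT \inclu SG$, so $T$ is $G$-quasi-affine to $S = S\adj \geq 0$.

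The routine-looking cancellations above are where the real work lies, and I expect the main obstacle to be the careful handling of products of unbounded selfadjoint factors. Concretely, one must check that $XT$ and $B\half A B\half$ are densely defined (so that the Friedrichs extension applies), that the restrictions to $\D \tstar B$-type cores really are cores, and that the cancellations $X\inv(XT) = T$ and $B\half B\mhalf = I$, together with the closability of operators such as $X\half T X\mhalf$, are legitimate; all of these rest on Lemma \ref{relationalpha} and on the injectivity and dense range of $X$. One should also verify that the products $X\inv A$ and $B\half A$ are (closable to) genuine operators and that each quasi-affinity inclusion is preserved under the closures involved.
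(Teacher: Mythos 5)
Your proof is correct and uses essentially the same ingredients as the paper's: the bounded quasi-affinity $G\adj G$, Friedrichs extensions of the densely defined nonnegative symmetric operators $XT$ (resp.\ $B\half A B\half$) to produce the unbounded selfadjoint factor, and conjugation by $B^{\pm\frac{1}{2}}$ to recover the intertwining inclusion. The only difference is that you run the cycle as $\mathrm{(i)}\Rightarrow\mathrm{(iii)}\Rightarrow\mathrm{(ii)}\Rightarrow\mathrm{(i)}$ while the paper runs $\mathrm{(i)}\Rightarrow\mathrm{(ii)}\Rightarrow\mathrm{(iii)}\Rightarrow\mathrm{(i)}$, which is immaterial.
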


\begin{proof}
    $   \rm (i)\Rightarrow  (ii)$ Assume   that $T$ is $G$-quasi-affine to $S= S\adj \geq 0$ and  fix         $A_0:=  G\adj SG $ and $B:= ( G\adj G)\inv .$  Then $B\inv \in \lplus $ and   the inclusion $G T \inclu S G$ implies that $ B\inv T =G\adj G T   \inclu G \adj  S G =A_0 \geq 0 $ with $ \dombar A_0 =\h.$   Let now   $A_F = {A_F}\adj \geq 0$ be the Friedrichs extension of $A_0.$   Then
      \begin{equation}\label{dima}
        B\inv T \inclu  A_0 \inclu A_F,
      \end{equation}
     and, therefore, $ T  \inclu B A_F \in \lldeux.$ \\
        %-------------------------------------------------
     %-------------------------------------------------
        \hspace*{0.4cm} $\rm (ii) \Rightarrow (iii)$  Since $T \inclu BA \in \lldeux$ it follows that $ B\inv T \inclu A \inclu (B\inv T)\adj= \tstar B\inv.$ Hence, for $X=  B\inv \in \lplus$ one has
        $   0 \leq X T  \inclu  \tstar X.$\\
       %   --------------------------------------------------------
      \hspace*{0.4cm} $\rm (iii) \Rightarrow (i)$ Let $A_0 := XT \geq 0.$ Then $A_0$ is densely defined. Let $A= A\adj \geq 0$  be a selfadjoint extension of $A_0$. Then clearly
     \begin{equation}\label{1003}
        XT \inclu A \inclu (XT)\adj = \tstar X.
       \end{equation}
       Now let $S_0:= X\mhalf A X\mhalf \geq 0$. Then by multiplying  \eqref{1003}  from the left  and right by $X\mhalf$ and   one obtains
      \begin{equation}\label{hospi}
     0 \leq   X\half T X\mhalf    \inclu  S_0  
        \end{equation}
      Since $\dombar T X\mhalf = \ranbar X = \h$ one concludes that $S_0$ is densely defined operator with the  Friedrichs extension $S_F=  (X\mhalf {A}\half) \overline{{A}\half X\mhalf} .$  Multiplying \eqref{hospi} from the right by $X\half$ one gets
        \begin{equation}\label{etoile}
             X\half T      \inclu S_0 X\half \inclu S_F X\half,
        \end{equation}
       which proves the quasi-affinity of $T$ to $S_F.$
        \end{proof}

\begin{remark} \label{e5rrem} \normalfont
In  the proof of Theorem \ref{TBAsurjective}, \rm(i),   $ A_0 = G\adj S G  = (G\adj S\half) S\half G$ with    $\dombar A_0= \h.$ Hence     $\dombar S\half G =\h$ and one has
\begin{equation}\label{nchlh}
 0 \leq A_0 = G\adj S G  \inclu (S\half G)\adj S\half G = \overline{ G\adj S\half } S\half G= A_F,
\end{equation}
where $A_F$ is the Friedrichs extension of $A_0.$  The proof also works for any    nonnegative selfadjoint extension of $A_0,$  respectively,  $S_0$ (see \eqref{etoile}).
\end{remark}

The following result is the analog of Corollary \ref{remsebsqua}. It shows a connection between the  reversed  inequality and quasi-affinity to a nonnegative selfadjoint operator.

\begin{corollary}\label{revsebquasiaff}
Let $T \in CO(\h)$ be a densely defined operator   let $S=S\adj \geq 0 $. If $T$ is $G$-quasi-affine to $S$  such that $\rho(A_F T) \neq \vide$,  then
 $$ T^* T  \geq  \tfrac{1}{\lambda}  A_F T \geq 0 $$
for some $\lambda >0,$ where $A_F$ is given in \eqref{nchlh}.
\end{corollary}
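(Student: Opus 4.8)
The plan is to mirror the proof of Corollary \ref{remsebsqua}, replacing the factorization $AB_F\inclu T$ coming from $\ldeux$ by the $\lldeux$-factorization coming from Theorem \ref{TBAsurjective}, and replacing the ordinary Sebestyén inequality by its reversed form. First I would invoke the implication $\rm(i)\Rightarrow(ii)$ of Theorem \ref{TBAsurjective} together with Remark \ref{e5rrem}: the $G$-quasi-affinity of $T$ to $S$ produces the nonnegative selfadjoint (Friedrichs) operator $A_F=\overline{G\adj S\half}\,S\half G$ displayed in \eqref{nchlh}, and, setting $C:=G\adj G\in\lplus$, one has $C=B\inv$ together with the key inclusion $CT=B\inv T\inclu A_F$ from \eqref{dima}, so that $T\inclu BA_F\in\lldeux$. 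The constant $\lambda$ will turn out to be $\n C\n=\n G\n^2$.

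The next step is to show that $A_F T$ is symmetric and nonnegative. Since $C\in\lplus$ one has $(CT)\adj=\tstar C$ by \eqref{tamer}, so taking adjoints in $CT\inclu A_F=A_F\adj$ reverses the inclusion and yields $A_F\inclu \tstar C$, whence $A_F T\inclu \tstar C T$. Because $C\half=|G|$ is bounded, $C\half T$ is closable, and $N:=(\overline{C\half T})\adj\,\overline{C\half T}\geq0$ is nonnegative selfadjoint; using $\tstar C\half=(C\half T)\adj$ one gets $\tstar C T=\tstar C\half\,C\half T=(C\half T)\adj C\half T\inclu N$, so that $A_F T\inclu \tstar C T\inclu N$. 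In particular $A_F T$ is a symmetric nonnegative operator.

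Now I would use the hypothesis $\rho(A_F T)\neq\vide$: a symmetric operator with non-empty resolvent set is selfadjoint, so $A_F T$ is selfadjoint, and being a selfadjoint restriction of the selfadjoint operator $N$ it must coincide with it, i.e. $A_F T=N=\tstar C T$. Finally, the bound $C\leq\n C\n I$ transfers to a form inequality: for every $x\in\DT=\D(\tstar T)\half\inclu\D N\half$ one has $\n N\half x\n^2=\n C\half Tx\n^2\leq\n C\n\,\n Tx\n^2=\n C\n\,\n(\tstar T)\half x\n^2$, whence $A_F T=\tstar C T\leq\n C\n\,\tstar T$. With $\lambda=\n C\n=\n G\n^2$ this reads $\tstar T\geq\tfrac1\lambda A_F T\geq0$, as claimed. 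This realizes the reversed-Sebestyén philosophy of \cref{subrelinv} without needing the full equivalence of Theorem \ref{corsebestyeninclusion}, which here is awkward to apply directly since the corresponding kernel hypothesis $\ker A_F\inclu\ker\tstar$ need not hold (one only obtains $\ker A_F\inclu\{x:Cx\in\ker\tstar\}$).

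I expect the main obstacle to be not the final form estimate (which is routine) but the passage from the a priori merely symmetric operator $A_F T$ to a genuine nonnegative selfadjoint operator. This is exactly where the assumption $\rho(A_F T)\neq\vide$ enters: it is needed both to make the ordering $\tstar T\geq\tfrac1\lambda A_F T$ meaningful (in this paper $\leq$ is defined only between nonnegative selfadjoint operators) and to secure the identification $A_F T=\tstar C T=N$, ruling out that $A_F T$ is a proper restriction of $N$. The only technical care required elsewhere is the relation-theoretic manipulation giving $A_F\inclu\tstar C$ and then $A_F T\inclu\tstar C T$, with the accompanying domain bookkeeping $\DT=\D(\tstar T)\half\inclu\D N\half$.
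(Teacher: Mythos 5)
Your proposal is correct and follows essentially the same route as the paper: both obtain $T\inclu BA_F$ with $B\inv=G\adj G$ from Theorem \ref{TBAsurjective}, pass to $A_FT\inclu \tstar B\inv T\inclu \tstar B\mhalf(\tstar B\mhalf)\adj\geq 0$ (your $N$ is the paper's $F$), use $\rho(A_FT)\neq\vide$ to upgrade the symmetric nonnegative $A_FT$ to a selfadjoint operator equal to $\tstar B\inv T$, and finish with the form estimate $\tstar B\inv T\leq\n B\inv\n\,\tstar T$. Your write-up merely makes explicit a few details the paper leaves implicit (the identification $A_FT=N$ and the domain bookkeeping $\DT=\D(\tstar T)\half\inclu\D N\half$).
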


\begin{proof}
Since $T$ is $G$-quasi-affine to $S$ one obtains from  Theorem  \ref{TBAsurjective} that $T \inclu BA_F$ where  $A_F = A_F\adj \geq 0$ and $B \inv =  G\adj G \in \lplus. $   Hence $B\inv T \inclu A_F$ and $A_F \inclu  (B\inv T)\adj =\tstar B\inv.$ Consequently,
$$
A_FT \inclu  \tstar B\inv  T =\tstar B\mhalf B\mhalf T \inclu \tstar B\mhalf (\tstar B\mhalf)\adj= :F \geq 0.
$$
Since $F= F\adj,$ it follows that  $A_FT$ is symmetric. On the other hand, $\rho(A_FT)  \neq \vide$ by assumption, and therefore
\[
 A_FT = \tstar B\inv T \leq \n B\inv \n  \tstar T \ \Rightarrow \tstar T \geq \tfrac{1}{\lambda}   A_FT \quad 
\text{with } \lambda =  \n B\inv \n. \qedhere
\]
\end{proof}

Note that a small adjustment to item (i) of Theorem \ref{TBAsurjective} allows
$T$ to be written as the product of two nonnegative, in general, unbounded  linear operators  motivating the following result.

\begin{proposition}\label{cond1equiv}
  Let $T\in CO(\h) $ be densely defined. Then, the following are equivalent:
   \begin{enumerate}\def\labelenumi{\rm(\roman{enumi})}
       \item   $T$ is  $G$-quasi affine to some $S=S\adj \geq 0$ such that $ \D \overline{G\adj S\half} S\half G \inclu \DT;$
         %---------------------------------------------------------------------------
       \item $T = B A  \in \lldeux  $ and $ \D T = \DA;$
        %---------------------------------------------------------------------------
        \item there exists  a quasi-affinity  $X \in \lplus$ such that
         \begin{equation}\label{103}
          X T = \tstar X \geq 0.
        \end{equation}
   \end{enumerate}
\end{proposition}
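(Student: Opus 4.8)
The plan is to establish the cycle $\mathrm{(i)} \Rightarrow \mathrm{(ii)} \Rightarrow \mathrm{(iii)} \Rightarrow \mathrm{(i)}$, exactly paralleling the proof of Proposition~\ref{egalquasiaff} but with the class $\ldeux$ replaced by $\lldeux$ and with Theorem~\ref{TBAsurjective} and Remark~\ref{e5rrem} playing the role that Proposition~\ref{inclusionnfs} and Remark~\ref{rembf} played there. Throughout, the point is that Theorem~\ref{TBAsurjective} already delivers the \emph{inclusion} versions of these statements, and the extra domain hypotheses appearing in (i)--(iii) serve precisely to upgrade those inclusions to equalities.

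For $\mathrm{(i)} \Rightarrow \mathrm{(ii)}$, I would feed the $G$-quasi-affinity of $T$ into Theorem~\ref{TBAsurjective} and Remark~\ref{e5rrem} to produce $T \inclu B A_F$, where $B := (G\adj G)\inv$ (so that $B\inv = G\adj G \in \lplus$) and $A_F = \overline{G\adj S\half}\,S\half G$ is the Friedrichs extension of $A_0 = G\adj S G$. The construction gives $B\inv T \inclu A_F$, hence $\DT \inclu \D A_F$; combining this with the hypothesis $\D A_F \inclu \DT$ yields $\DT = \D A_F$, and since $A_F x = B\inv Tx \in \ran B\inv = \D B$ for $x \in \DT$, the inclusion $T \inclu B A_F$ sharpens to the equality $T = B A_F \in \lldeux$ with $\DT = \D A_F$. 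For $\mathrm{(ii)} \Rightarrow \mathrm{(iii)}$, I set $X := B\inv \in \lplus$, which is a quasi-affinity because $B \in Gl(\h)$; the identity $\DT = \D A$ together with $B\inv B = I$ on $\D B$ gives $XT = A$, and since $X$ is bounded, \eqref{tamer} yields $(XT)\adj = \tstar X$, so the selfadjointness of $A = XT$ forces $XT = \tstar X \geq 0$.

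For $\mathrm{(iii)} \Rightarrow \mathrm{(i)}$, I would put $A := XT$; by \eqref{tamer} this equals $\tstar X = (XT)\adj$, so $A = A\adj \geq 0$ is densely defined, and one checks $T = X\inv A$ with $\D(X\inv A) = \DT = \D A$ (which already re-establishes (ii)). Applying the implication $\mathrm{(iii)} \Rightarrow \mathrm{(i)}$ of Theorem~\ref{TBAsurjective} then shows that $T$ is $X\half$-quasi-affine to the Friedrichs extension $S_F = (X\mhalf A\half)\,\overline{A\half X\mhalf}$ of $S_0 = X\mhalf A X\mhalf$. It remains to verify the domain condition of (i) with $G = X\half$ and $S = S_F$: by Remark~\ref{e5rrem}, $A_F := \overline{X\half S_F\half}\,S_F\half X\half$ is the Friedrichs extension of $A_0' := X\half S_F X\half$, and since $S_F \supseteq S_0$ gives $A \inclu A_0'$ with $A$ already selfadjoint, one gets $A_0' = A$ and hence $A_F = A = XT$, so $\D A_F = \D A = \DT$, which is exactly the required inclusion.

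The step I expect to be the main obstacle is the selfadjointness of the conjugated operator $S = X\half T X\mhalf = X\mhalf A X\mhalf$ in $\mathrm{(iii)} \Rightarrow \mathrm{(i)}$. In Proposition~\ref{egalquasiaff} the analogous operator is $X\half B X\half$, a sandwich by the \emph{bounded} factor $X\half$, whose selfadjointness is immediate from Lemma~\ref{avanttheospectral}; here the sandwich is by the \emph{unbounded} $X\mhalf$, so Lemma~\ref{avanttheospectral} no longer applies directly. The way around this is to route through the Friedrichs extension $S_F$ supplied by Theorem~\ref{TBAsurjective}---which is selfadjoint by construction---and then to show, using Lemma~\ref{relationalpha} (which guarantees that $X\half T = X\mhalf A$ is closed) together with the fact that $A = XT$ is \emph{already} selfadjoint, that conjugating $S_F$ back by $X\half$ recovers $A$ exactly. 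This collapse of the Friedrichs step to an equality is what both identifies $S_F$ concretely and pins down the domain in (i); the remaining bookkeeping on the domains of the compositions with $X\half$ and $X\mhalf$ is routine once the closedness from Lemma~\ref{relationalpha} is in hand.
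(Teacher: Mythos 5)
Your proposal is correct and follows essentially the same route as the paper: (i)$\Rightarrow$(ii) via Theorem~\ref{TBAsurjective} and Remark~\ref{e5rrem} with the domain hypothesis upgrading $B\inv T \inclu A_F$ to equality, (ii)$\Rightarrow$(iii) by taking $X=B\inv$, and (iii)$\Rightarrow$(i) by passing through the Friedrichs extension $S_F$ of $S_0=X\mhalf A X\mhalf=X\half T X\mhalf$ and identifying the Friedrichs extension of $X\half S_F X\half$ with the selfadjoint operator $A=XT$ to recover the domain condition. Your closing identification ($A\inclu A_0'$ with $A$ selfadjoint and $A_0'$ symmetric forces equality) is just a rephrasing of the paper's ``$XT=X\half S_0 X\half\inclu E_F$ with both sides selfadjoint.''
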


 \begin{proof}
   $\rm (i) \Rightarrow (ii)$
 Using the same argument as in the proof of Theorem  \ref{TBAsurjective} combined with  Remark \ref{e5rrem} one obtains  that  $B\inv T \inclu A_F= \overline{ G\adj S\half } S\half G,$ cf. \eqref{nchlh}. Now the assumption      $\D A_F \inclu \DT $ shows that
 $$
   B\inv T  = A_F.
   $$
   Hence $T = B A _F \in \lldeux$ and $\D T = \D A_F.$\\
 %-----------------------------------------------
 $\rm (ii) \Rightarrow (iii)$ Observe that $ B\inv T \inclu A $ and since $\DT = \DA$ one obtains
 $$ A = B\inv T = A\adj = \tstar B\inv.
 $$
 Now take $X= B\inv \in \lplus $ to get  \eqref{103}. \\
 $\rm (iii) \Rightarrow (i)$ By assumption $A=XT \geq 0$ is selfadjoint. 
 Now proceed as in the proof of Theorem \ref{TBAsurjective}. Then the operator
 \begin{equation}\label{hospoto}
   S_0= X\mhalf  A X\mhalf = X\half T X\mhalf \geq 0
 \end{equation}
is densely defined where its Friedrichs extension $S_F$ satisfies \eqref{etoile} and $T$ is quasi-affine to $S_F.$ Multiplying \eqref{etoile} from the left by $X\half$ gives
   $$
   X T  = X\half S_0  X\half \inclu X\half S_F  X\half \inclu  E_F,
   $$
   where $ E_F= \overline{X\half {S_F}\half} {S_F}\half X\half $ denotes the Friedrichs extension of $X\half S_F X\half.$ Consequently, $XT = E_F$ and  $\D E_F = \DT,$ as required.
 \end{proof}

It is worth noticing that the quasi-affinity of $T$ together with that of $\tstar$ gives raise to a new notion defined below, which will be characterized in Lemma \ref{equivquasisimilarity}.

 \begin{definition} \normalfont \cite[Definition 2.1]{Ota89}
  $T \in LO(\h)$ is said to be
  \emph{quasi-similar}    to $S \in LO(\h)$ if there
   exist two quasi-affinities $G_1 , G_2 \in B(\h)$ such that
$$
G_1T \inclu SG_1 \quad \text{and} \quad G_2S \inclu TG_2.
$$
\end{definition}

The next lemma contains a duality property of the quasi-affinity and characterizes the quasi-similarity to a nonnegative selfadjoint operator.

\begin{lemma}\label{equivquasisimilarity}
Let $T,S  \in CO(\h)$ be a densely defined operators.   Then the following statements are
equivalent:
\begin{enumerate}[\rm(i)]
 \item  $T$ is $G$-quasi-affine to $S$ $\ifaf$ $S\adj$ is $G\adj$-quasi-affine to $\tstar;$
  \item  $T$ is quasi-similar to   $S=S\adj $ $\ifaf$ $T$ and $\tstar$ are quasi-affine       to $S=S\adj.$
\end{enumerate}
\end{lemma}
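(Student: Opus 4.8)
The plan is to obtain both equivalences from a single adjoint computation together with the observation that passing to the adjoint sends quasi-affinities to quasi-affinities. Indeed, for $G \in \bh$ one has $\kerr G\adj = (\ran G)\orth$ and $\ranbar G\adj = (\kerr G)\orth$, so $G$ is injective with dense range exactly when $G\adj$ is; hence $G$ is a quasi-affinity if and only if $G\adj$ is. With this in hand item (i) reduces to tracking the intertwining inclusion under adjoints (and holds for arbitrary closed densely defined $T,S$), while item (ii) becomes a formal consequence of (i) once the selfadjointness $S = S\adj$ is invoked.

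For the forward implication in (i) I would start from $GT \inclu SG$ and apply the adjoint, which reverses inclusions, to obtain $(SG)\adj \inclu (GT)\adj$. Since $G \in \bh$, equation \eqref{tamer} gives $(GT)\adj = \tstar G\adj$, whereas on the left-hand side only the general inclusion $G\adj S\adj \inclu (SG)\adj$ is available. Chaining these, $G\adj S\adj \inclu (SG)\adj \inclu (GT)\adj = \tstar G\adj$, which is precisely the assertion that $S\adj$ is $G\adj$-quasi-affine to $\tstar$. For the converse I would apply this very implication after replacing $(T,S,G)$ by $(S\adj,\tstar,G\adj)$ — legitimate because $G\adj$ is again bounded — obtaining $G T\clo \inclu S\clo G$, and then invoke $T\clo = T$ and $S\clo = S$, valid since $T,S \in \coh$ are closed and densely defined, to recover $GT \inclu SG$.

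Item (ii) then follows by unwinding the definition of quasi-similarity. If $T$ is quasi-similar to $S = S\adj$, there are quasi-affinities $G_1, G_2$ with $G_1 T \inclu S G_1$ and $G_2 S \inclu T G_2$; the first says $T$ is quasi-affine to $S$, while applying (i) to $G_2 S \inclu T G_2$ and using $S\adj = S$ turns it into $G_2\adj \tstar \inclu S G_2\adj$, so $\tstar$ is quasi-affine to $S$ as well. Conversely, suppose $T$ is $G_1$-quasi-affine to $S$ and $\tstar$ is $H$-quasi-affine to $S$, i.e. $H \tstar \inclu S H$. Applying (i) to this last inclusion together with $S\adj = S$ yields $H\adj S \inclu T H\adj$, which supplies the second intertwiner $G_2 := H\adj$ (again a quasi-affinity) needed for quasi-similarity, the first intertwiner being the given $G_1$.

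The step I expect to require the most care is the adjoint of the product $SG$ with $S$ unbounded: here $(SG)\adj = G\adj S\adj$ cannot be expected, since neither hypothesis of \eqref{tamer} holds ($S$ is not bounded and $G$ is only a quasi-affinity, not invertible), so only the one-sided inclusion $G\adj S\adj \inclu (SG)\adj$ is at hand, reflecting the same unbounded phenomenon behind the failure of \eqref{snee}. The argument nonetheless goes through because quasi-affinity is defined through an inclusion rather than an equality, so this one-sided inclusion points in exactly the required direction. The only other delicate ingredient is the use of the reflexivity $T\clo = T$, $S\clo = S$ in the converse of (i), which is precisely where the standing hypothesis $T,S \in \coh$ is essential.
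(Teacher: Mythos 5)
Your proof is correct and follows essentially the same route as the paper's: item (i) by taking adjoints, using $(GT)\adj=\tstar G\adj$ for bounded $G$ together with the one-sided inclusion $G\adj S\adj \inclu (SG)\adj$, and $T\clo=T$, $S\clo=S$ for the converse; item (ii) as a formal consequence of (i) via $S=S\adj$. The paper merely asserts the adjoint equivalence in (i) without spelling out these details, which you supply accurately (including the observation that $G\adj$ is again a quasi-affinity).
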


\begin{proof}
\begin{enumerate}[\rm(i)]
\item Let $S \in CO(\h).$ Then $T$ is $G$-quasi-affine to $S$ $\ifaf$ $G T \inclu SG $ $\ifaf $ $ G\adj S\adj \inclu \tstar G\adj$, i.e. $S\adj$ is $G\adj$-quasi-affine to $\tstar.$
    %__________________________
\item   If $T$ is   quasi-similar to $S,$ then there are two quasi-affinities $G_1 , G_2 \in B(\h)$
      such that $ G_1T \inclu SG_1$ and $G_2S  \inclu   TG_2.$ This shows that $T$ is $G_1$-quasi affine to $S$ and, by (i), $\tstar$ is $G_2\adj$-quasi-affine to $S.$
%------------------------------------
Conversely, if   $T$ and $\tstar$ are quasi-affine to $S,$  then it follows from $\rm{(i)}$ that there are   two
      quasi-affinities $G_1,G_2 \in B(\h)$ with the property that $G_1T \inclu SG_1$
      and $G_2\adj S    \inclu S G_2\adj.$   As $G_2\adj$ is a quasi-affinity, one concludes that $T$ is
      quasi-similar to $S.$
\end{enumerate}
\end{proof}

The next result is now a consequence of Lemma \ref{equivquasisimilarity}, Theorem \ref{TBAsurjective} and Proposition \ref{cond1equiv}.

 \begin{corollary}
Let $T \in CO(\h)$ be a densely defined operator and let $S =S\adj \geq 0.$ If
$ T$ is quasi-similar to $S$
     then  there exist $T_1 \in \ldeux$ and $T_2 \in \lldeux$ such that
   $$
   T_1 \inclu T \inclu T_2.
   $$
   In particular, if
$ T$ and $\tstar$ are respectively     $G_1$ and $G_2$-quasi-affine to $S$  such that
  $\D ( \overline{G_1\adj S\half}  S\half G_1) \inclu \DT \inclu \D (    {{G_2}\adj}\inv S\half \overline{S\half {{G_2}\adj}\inv})$, then
  \begin{equation}\label{sevnap0}
    T \in \ldeux \cap \lldeux.
  \end{equation}
 \end{corollary}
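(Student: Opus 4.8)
The plan is to split the two–sided quasi-similarity into a pair of one–sided quasi-affinities and to feed each of them into the factorization theorem that matches the corresponding class.

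First I would apply Lemma \ref{equivquasisimilarity}(ii). Since $S=S\adj$, the hypothesis that $T$ is quasi-similar to $S$ is equivalent to the conjunction: $T$ is quasi-affine to $S=S\adj\geq 0$ \emph{and} $\tstar$ is quasi-affine to $S=S\adj\geq 0$. This converts the single assumption into two independent inputs, one tailored to $\lldeux$ and one to $\ldeux$. For the upper inclusion I would use the quasi-affinity of $T$: Theorem \ref{TBAsurjective}, $\rm(i)\Rightarrow(ii)$, produces $T_2\in\lldeux$ with $T\inclu T_2$. For the lower inclusion I would use the quasi-affinity of $\tstar$, which is precisely item (i) of Proposition \ref{inclusionnfs} (the $\ldeux$-counterpart of Theorem \ref{TBAsurjective}); its implication $\rm(i)\Rightarrow(ii)$ yields $T_1\in\ldeux$ with $\ranbarA=\h$ and $T_1\inclu T$. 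Concatenating gives $T_1\inclu T\inclu T_2$, which proves the first assertion.

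For the refined conclusion \eqref{sevnap0} I would replace these two existence results by their sharp versions, which return \emph{equalities} once a domain bound is imposed. On the $\lldeux$ side, $T$ being $G_1$-quasi-affine to $S$ together with $\D(\overline{G_1\adj S\half}S\half G_1)\inclu\DT$ is exactly hypothesis (i) of Proposition \ref{cond1equiv}; its conclusion upgrades $T\inclu T_2$ to $T=BA_F\in\lldeux$ with $\DT=\D A_F$ and $A_F=\overline{G_1\adj S\half}S\half G_1$. On the $\ldeux$ side, $\tstar$ being $G_2$-quasi-affine to $S$ together with the stated bound $\DT\inclu\D B_F$, where $B_F$ is the Friedrichs extension described in Remark \ref{rembf}, is hypothesis (i) of Proposition \ref{egalquasiaff}; its conclusion upgrades $T_1\inclu T$ to the equality $T=AB_F\in\ldeux$. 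Since $T$ then coincides both with an element of $\lldeux$ and with an element of $\ldeux$, one obtains $T\in\ldeux\cap\lldeux$, i.e. \eqref{sevnap0}.

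The step I expect to be the main obstacle is the bookkeeping of the side conditions that force both inclusions to become equalities simultaneously. Concretely, one must verify that the operators $A_F$ and $B_F$ manufactured from $G_1,G_2$ and $S$ through the Friedrichs extension (Remark \ref{rembf}) are literally the operators occurring in the two domain bounds — this may require the duality $\rm(i)$ of Lemma \ref{equivquasisimilarity} to realign the quasi-affinity $G_2$ with the $G_2\adj$ entering $B_F$ — and that $\DT=\D A_F$ and $\DT\inclu\D B_F$ are mutually compatible, so that the two Friedrichs extensions attached to $T$ and to $\tstar$ indeed describe the same operator $T$. Everything else is a direct transcription of the cited propositions.
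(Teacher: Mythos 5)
Your proposal is correct and follows essentially the same route as the paper: Lemma \ref{equivquasisimilarity} to split the quasi-similarity into the two one-sided quasi-affinities, then Proposition \ref{inclusionnfs} and Theorem \ref{TBAsurjective} for the inclusions $T_1\inclu T\inclu T_2$, and Propositions \ref{egalquasiaff} and \ref{cond1equiv} to upgrade these to equalities under the stated domain conditions. The bookkeeping issue you flag (matching $G_2$ against $G_2\adj$ in the formula for $B_F$) is real but is exactly the realignment already built into Lemma \ref{equivquasisimilarity}(i), so no new idea is needed.
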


\begin{proof} Assume that $T$ is quasi-similar to $S=S\adj \geq 0.$ Then, by   Lemma \ref{equivquasisimilarity}, there exist two quasi-affinities  $G_1, G_2 \in B(\h)$ such that
  $T$   and $\tstar$  are respectively $G_1$ and $G_2$ quasi-affine to  $S.$ A direct application of  Proposition  \ref{inclusionnfs} and Theorem \ref{TBAsurjective} implies the existence of $A_1\in \lplus$, $B_1={B_1}\adj \geq 0,$  $B_2\inv \in \lplus$ and  $A_2 ={A_2}\adj \geq 0$ such that
 \begin{equation}\label{sevnap}
   \ldeux \ni  A_1 B_1 \inclu T \inclu B_2 A_2   \in \lldeux.
 \end{equation}
Now, assume   that  $\D ( \overline{G_1\adj S\half}  S\half G_1) \inclu \DT \inclu \D (    {{G_2}\adj}\inv S\half  \overline{S\half {{G_2}\adj}\inv})$.
Then equalities hold in \eqref{sevnap} by Propositions \ref{egalquasiaff} and \ref{cond1equiv}, which proves \eqref{sevnap0}.
\end{proof}

\subsection{Quasi-affinity and quasi-similarity to   $\mathbf{S \in \lplus}$} \label{subseection43}

It is worth noticing  that if $T$ is $\mathcal{W}$-similar or similar to a bounded nonnegative  operator $S \in \lplus$, then also $T$ itself is bounded. In this case, its similarity to $S$ is already dealt with in  \cite[Theorem 3.1]{product2021}. The focus is therefore on quasi-affinity and quasi-similarity.

\begin{proposition}\label{ivaa}
  Let $T \in LO(\h)$ be a densely defined operator. Then  the following statements are equivalent:
\begin{enumerate}[(i)]\def\labelenumi{\rm(\roman{enumi})}
\item \textcolor[rgb]{0.00,0.00,0.00}{$T$ is $G$-quasi-affine to $S \in \lplus;$}
  \item  $\overline{GT G\inv}= {G\inv}\adj \tstar G\adj \in \lplus $ for a quasi-affinity $G \in B(\h);$
  \item $ \overline{ X\half T X\mhalf} = X\mhalf \tstar X\half \in \lplus$  for a quasi-affinity    $X  \in \lplus$.
\end{enumerate}
In this case, $T \inclu  B A$,    where $A, B\inv \in \lplus.$   Moreover, there exists a   quasi-affinity $X \in \lplus$  such that
        \begin{equation}\label{halm}
          \tstar X =  \overline{XT} \in \lplus .
        \end{equation}
\end{proposition}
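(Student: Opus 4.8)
The plan is to obtain the equivalence $\rm (i)\ifaf (ii)\ifaf (iii)$ by reducing $\rm (ii)\ifaf (iii)$ to \cref{remquasiaffini} and then proving $\rm (i)\ifaf (ii)$ directly; the two concluding displays will then fall out by specialising the construction used for \cref{TBAsurjective} to the present bounded $S.$

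For $\rm (ii)\ifaf (iii)$ I would reuse the polar decomposition behind \cref{remquasiaffini}. Writing $G=U|G|$ with $U$ unitary (possible since $G$ is a quasi-affinity) and setting $X:=G\adj G\in\lplus,$ the identity \eqref{etoileuh} reads ${G\inv}\adj\tstar G\adj=U(X\mhalf\tstar X\half)U\inv.$ As unitary equivalence preserves membership in $\lplus,$ one side lies in $\lplus$ exactly when the other does; conversely, given (iii) one takes $G:=X\half$ to recover (ii). Hence the only new content is $\rm (i)\ifaf (ii).$

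For $\rm (i)\Rightarrow (ii),$ assume $GT\inclu SG$ with $S\in\lplus$ and $G$ a quasi-affinity. I would first record that $G\DT$ is dense: if $z\perp G\DT$ then $G\adj z\perp\DT,$ so $G\adj z=0$ and $z\in\ker G\adj=(\ranbar G)\orth=\{0\}.$ Computing domains shows $\D(GTG\inv)=G\DT,$ and $GTG\inv$ acts there as $S,$ so taking closures gives $\overline{GTG\inv}=S.$ For the second expression I would pass to adjoints in $GT\inclu SG$: since $G\in\bh$ one has $(GT)\adj=\tstar G\adj$ by the remark after \eqref{tamer}, while $(SG)\adj=G\adj S,$ whence $G\adj S\inclu\tstar G\adj.$ Here is the delicate point and the main obstacle: in the unbounded setting the identity \eqref{snee} need not hold, but \emph{because $S$ is bounded} the operator $G\adj S$ is everywhere defined and bounded, so its inclusion in the closed operator $\tstar G\adj$ forces the equality $\tstar G\adj=G\adj S\in\bh.$ Applying the injective map ${G\inv}\adj=(G\adj)\inv$ on the left (using $\ker G\adj=\{0\}$) then yields ${G\inv}\adj\tstar G\adj=(G\adj)\inv G\adj S=S,$ so $\overline{GTG\inv}={G\inv}\adj\tstar G\adj=S\in\lplus,$ which is (ii). The implication $\rm (ii)\Rightarrow (i)$ is the same chain read backwards: put $S:=\overline{GTG\inv}\in\lplus$ and evaluate $GTG\inv$ on the dense set $G\DT$ to obtain $GTx=SGx$ for $x\in\DT,$ i.e. $GT\inclu SG.$

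For the concluding assertions I would argue from (i). The construction in the proof of \cref{TBAsurjective} gives $T\inclu BA_F$ with $B\inv=G\adj G\in\lplus$ and $A_F$ the Friedrichs extension of $A_0:=G\adj SG$; the key simplification is that $S\in\lplus$ makes $A_0=G\adj SG\in\lplus$ already bounded and selfadjoint, so $A_F=A_0$ and $T\inclu BA$ with $A,B\inv\in\lplus.$ For \eqref{halm} I would take $X:=G\adj G,$ which is a quasi-affinity in $\lplus$ because $\ker X=\ker G=\{0\}$ forces $\ranbar X=\h.$ Then $XT=G\adj GT\inclu G\adj SG=:A,$ and since $A\in\lplus$ is everywhere defined while $\D(XT)=\DT$ is dense, the closure collapses to $\overline{XT}=A.$ Finally, using the identity $\tstar G\adj=G\adj S$ established above, $\tstar X=\tstar G\adj G=G\adj SG=A,$ so $\tstar X=\overline{XT}=G\adj SG\in\lplus,$ as required.
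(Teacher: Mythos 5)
Your proposal is correct and follows essentially the same route as the paper: the equivalence (ii)$\Leftrightarrow$(iii) via Lemma \ref{remquasiaffini}, the collapse of $GTG^{-1}\subseteq S$ to an equality using the boundedness of $S$ and the density of $G\,\mathrm{dom}\,T$, and the factorization $T\subseteq BA$ with $A=G^*SG$, $B^{-1}=G^*G$. The only cosmetic difference is that you derive \eqref{halm} directly from the identity $T^*G^*=G^*S$ with $X=G^*G$, whereas the paper reads it off from item (iii) via $M=X^{-1/2}T^*X^{1/2}$; both arguments are valid and land on the same operator.
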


\begin{proof}
$\rm{(i)}  \Rightarrow \rm{(i)}$ Observe that the inclusion $GT \inclu SG$ implies that $GTG\inv \inclu S.$ Since $S \in B(\h)$ and $\dombar GTG\inv  =\h$ one concludes that
\begin{equation}\label{44}
   S= (GTG\inv)\adj = ({G\inv})\adj \tstar G\adj = \overline{G T G\inv} \in \lplus.
\end{equation}
  $\rm{(ii)} \Rightarrow  \rm{(i)}$ Fix $S_0:=\overline{GT G\inv}.$ Then, $S_0 \in \lplus$ and $GT = G T G\inv G \inclu  \overline{G T G\inv} G  \inclu S_0G,$ as required. \\
      \hspace*{0.3cm}  The equivalence $\rm{(ii)} \ifaf  \rm{(iii)}$ follows directly from Lemma \ref{remquasiaffini}.

Now, assume that $T$ is $G$-quasi-affine to $S \in \lplus.$ Then $G\adj G T \inclu  G\adj SG =:A \in \lplus,$ and hence $ T \inclu B A$ with $B\inv:= G\adj G \in \lplus.$ To see \eqref{halm}, observe from \rm(iii) that for $M:= X\mhalf \tstar X\half \in \lplus$ one has
$X\half M X\half \inclu \tstar X,$ which yields that
\[
 \overline{XT }= (\tstar X)\adj = (X\half M X\half)\adj
                                       = X\half M X\half
                                       \in \lplus. \hfill \qedhere
\]
\end{proof}

The following theorem is the optimal analogue of Corollary \ref{revsebquasiaff} in the context of the reversed inequality.

\begin{theorem}\label{lasttheo}
 Let $T\in CO(\h) $ be a densely defined operator. If $T$ is $G$-quasi-affine to some $S  \in \lplus$ then exists $A \in \lplus $  such that
        \begin{equation}\label{inequalitychoice}
           T^* T  \geq  \tfrac{1}{\lambda}    \overline{AT}
        \end{equation}
      for some $\lambda > 0.$
\end{theorem}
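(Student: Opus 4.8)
The plan is to exhibit a concrete bounded nonnegative $A$ built directly from the intertwining data and then read off the inequality from a one-line form computation, exactly following the construction behind Proposition \ref{ivaa}, but now exploiting the boundedness of $S$ to bypass the resolvent hypothesis that was needed in Corollary \ref{revsebquasiaff}. First I would set $A:=G\adj S G$. Since $S\in\lplus$ and $G\in\bh$, this $A$ is bounded, selfadjoint and nonnegative, so $A\in\lplus$; indeed it is precisely the operator appearing in the factorization $T\inclu BA$ produced in the proof of Proposition \ref{ivaa}, where also $B\inv=G\adj G\in\lplus$.

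The key step is to identify $\overline{AT}$. For $x\in\DT$ the defining inclusion $GT\inclu SG$ gives $GTx=SGx$, whence
\[
(AT)x = G\adj S\,(GTx) = G\adj S\,(SGx) = G\adj S^2 G\,x .
\]
Thus $AT$ is the restriction of the bounded everywhere-defined operator $C:=G\adj S^2 G\in\lplus$ to the dense subspace $\DT$, and therefore $\overline{AT}=C=G\adj S^2 G$. This is exactly where boundedness of $S$ is essential: it forces $\overline{AT}$ to be a bounded nonnegative selfadjoint operator, whereas for a general $S=S\adj\geq 0$ one only obtains the unbounded operator $A_F T$, whose selfadjointness had to be secured by the extra assumption $\rho(A_F T)\neq\vide$ in Corollary \ref{revsebquasiaff}.

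It then remains to compare the forms of $\overline{AT}$ and $\tstar T$ on $\DT=\D(\tstar T)\half$. For $x\in\DT$, using $GTx=SGx$ once more and the standard identity $\n (\tstar T)\half x\n=\n Tx\n$, one has
\[
\n (\overline{AT})\half x\n^2 = \la \overline{AT}\,x,x\ra = \n SGx\n^2 = \n GTx\n^2 \leq \n G\n^2\,\n Tx\n^2 = \n G\n^2\,\n (\tstar T)\half x\n^2 .
\]
Since $\overline{AT}$ is everywhere defined, the domain inclusion $\D(\tstar T)\half\inclu\D(\overline{AT})\half=\h$ required by the definition of the operator ordering holds trivially, so the displayed estimate yields $\overline{AT}\leq\lambda\,\tstar T$ with $\lambda=\n G\n^2>0$ (note $G\neq 0$, as $G$ is a quasi-affinity). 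This is exactly $\tstar T\geq\tfrac{1}{\lambda}\overline{AT}$, as claimed.

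There is no genuine obstacle once the right $A$ is chosen; the only two delicate points are (i) correctly identifying $\overline{AT}$ with the full bounded operator $G\adj S^2 G$, which relies on $\DT$ being dense and on $S$ being bounded, and (ii) verifying that the paper's definition of the inequality $\leq$ is met, which reduces to the (automatic) domain inclusion together with the single form estimate above.
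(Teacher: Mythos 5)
Your proof is correct and follows essentially the same route as the paper: both take $A=G\adj SG\in\lplus$, identify $\overline{AT}=G\adj S^2G\in\lplus$ from the intertwining relation, and conclude via the form estimate $\n GTx\n^2\leq\n G\n^2\n Tx\n^2=\n G\n^2\n(\tstar T)\half x\n^2$ on $\DT=\D(\tstar T)\half$, yielding $\lambda=\n G\n^2$.
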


\begin{proof}
Observe from the inclusion  $  G T \inclu   SG$ that $T \inclu G\inv SG$ and   $G\adj G T \inclu G\adj S G=: A \in~\lplus.$   Hence   $A =(G\adj G T)\adj = \tstar G\adj G$ and one has
$$
AT \inclu   (G\adj SG ) G\inv  SG  \inclu G\adj S^2 G= G\adj S (G\adj S)\adj \in \lplus.
$$
This implies that $ G\adj S^2 G= (AT)\adj = \overline{AT}  \in \lplus $ and for all $x \in \DT = \D A T = \D \tstar G\adj G T   \inclu \D \overline{AT}$ one has
\begin{align*}
  \langle {\overline{AT }}\half x , {\overline{AT }} \half x \rangle = \langle AT x ,x \rangle =  \langle \tstar G\adj G T x , x \rangle
                                     & = \langle   G\adj G T x , T x \rangle \\
                                     & \leq \n G\adj G \n \langle    T x , T x \rangle  \\
            & = \n G\adj G \n \langle   (\tstar  T)\half x , (\tstar  T)\half x \rangle.
\end{align*}
This completes the argument.
\end{proof}

Note that in the particular case where $AT =  \overline{AT} $, Theorem \ref{lasttheo} ultimately reduces to the bounded operator setting since $\DT = \D AT = \h,$ thereby justifying the inequality \eqref{inequalitychoice}. This section is ended  with a recapitalization joining the classes $\lldeux$ and $\ldeux$ together with quasi-similarity, and quasi-affinity to a bounded nonnegative operator.

 \begin{proposition}
   Let $T\in CO(\h) $ be densely defined.
 Then the following assertions  are equivalent:
   \begin{enumerate}[\rm(i)]
       \item $T$ is quasi-similar to   $S \in
           \lplus.$
%       %-------------------------------------
        \item $T$ and $\tstar$ are quasi-affine to $S \in \lplus.$
%%      %----------------------------------
\item $ \overline{X_1\half \tstar            X_1\mhalf} = X_1\mhalf T X_1\half \in \lplus $ and  $ \overline{X_2\half T            X_2\mhalf} = X_2\mhalf \tstar X_2\half \in \lplus $  for some quasi-affinities $X_1,X_2 \in \lplus.$
  \end{enumerate}
  In this case, there exist  $ A_i, {B_i}\inv \in \lplus$, $i=1,2$ such that  $T_1=A_1 B_1,$   $T_2= B_2 A_2 $ and
  $$
   T_1 \inclu T \inclu  T_2 .
   $$
   Moreover, $
       \overline{X_2T},
        \overline{X_1 \tstar}  \in  \lplus.$
\end{proposition}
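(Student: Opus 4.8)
The plan is to reduce all three equivalences to the two single‑operator results already available, namely Lemma~\ref{equivquasisimilarity} and Proposition~\ref{ivaa}, and then to read off the two‑sided factorization by applying the latter to both $T$ and $\tstar$. Since $\lplus \inclu \{S=S\adj\geq 0\}$, the equivalence $\mathrm{(i)}\ifaf\mathrm{(ii)}$ is nothing but Lemma~\ref{equivquasisimilarity}~(ii): $T$ is quasi-similar to $S$ precisely when both $T$ and $\tstar$ are quasi-affine to that same $S$. For $\mathrm{(ii)}\Rightarrow\mathrm{(iii)}$ I would invoke Proposition~\ref{ivaa} twice with the common $S$. Applied to $T$ it produces a quasi-affinity $X_2\in\lplus$ with $\overline{X_2\half T X_2\mhalf}=X_2\mhalf\tstar X_2\half\in\lplus$; applied to $\tstar$, and using $(\tstar)\adj=T$, it produces a quasi-affinity $X_1\in\lplus$ with $\overline{X_1\half\tstar X_1\mhalf}=X_1\mhalf T X_1\half\in\lplus$. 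Conjoining the two gives $\mathrm{(iii)}$.

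The main obstacle is the backward passage $\mathrm{(iii)}\Rightarrow\mathrm{(i)}$, because Proposition~\ref{ivaa} only returns that $T$ is quasi-affine to \emph{some} $S_2\in\lplus$ and $\tstar$ to \emph{some} $S_1\in\lplus$, whereas quasi-similarity in $\mathrm{(i)}$ (and the single symbol $S$ in $\mathrm{(ii)}$) demands one common target; note that for unbounded $T$ one genuinely cannot take $S_1=S_2$ for free, since the intertwiner that would carry $T$ to the target of $\tstar$ is only the unbounded $X_1\mhalf$. I would reconcile them as follows. Writing $G T\inclu S_2 G$ and, via the duality of Lemma~\ref{equivquasisimilarity}~(i), $H\adj S_1\inclu T H\adj$, composition yields $G H\adj S_1\inclu S_2\, G H\adj$ with $G H\adj$ again a quasi-affinity; symmetrically $S_2$ is quasi-affine to $S_1$, so $S_1$ and $S_2$ are quasi-similar bounded self-adjoint, hence normal, operators. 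By the classical fact that quasi-similar normal operators are unitarily equivalent, $S_2=W S_1 W\adj$ for some unitary $W$; conjugating the intertwiner for $\tstar$ by $W$ then makes both $T$ and $\tstar$ quasi-affine to the single operator $S_2\in\lplus$, and Lemma~\ref{equivquasisimilarity}~(ii) delivers $\mathrm{(i)}$. This unification, resting on the unitary-equivalence theorem for quasi-similar normal operators, is the one nontrivial point; it also shows $\mathrm{(iii)}\Rightarrow\mathrm{(ii)}$, completing the biconditionals.

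Finally I would obtain the ``in this case'' statements by feeding the two quasi-affinities back into Proposition~\ref{ivaa}. Applied to $T$ it gives $T\inclu B A$ with $A,B\inv\in\lplus$, so set $T_2:=B_2 A_2$ with $B_2\inv,A_2\in\lplus$ and $T\inclu T_2$. Applied to $\tstar$ it gives $\tstar\inclu B' A'$ with $A',(B')\inv\in\lplus$; taking adjoints and using the general inclusion $(B'A')\adj\supseteq A'B'$ together with $(\tstar)\adj=T$ yields $A'B'\inclu T$, so set $T_1:=A_1 B_1=A'B'\inclu T$ with $A_1\in\lplus$ and $B_1\inv\in\lplus$. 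Hence $T_1\inclu T\inclu T_2$, and the boundedness of $S$ is exactly what keeps all four factors in $\lplus$: for instance the inner factor $G\adj S G$ arising from Theorem~\ref{TBAsurjective} is a product of bounded operators, so it is already bounded and equals its own Friedrichs extension. The closing line $\overline{X_2 T},\ \overline{X_1\tstar}\in\lplus$ is precisely the ``moreover'' clause \eqref{halm} of Proposition~\ref{ivaa}, read for $T$ (giving $\overline{X_2 T}=\tstar X_2\in\lplus$) and for $\tstar$ (giving $\overline{X_1\tstar}=T X_1\in\lplus$), respectively.
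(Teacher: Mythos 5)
Your proof is correct and rests on the same two ingredients as the paper's one-line proof, which simply declares that the result ``follows immediately from a combination of Lemma \ref{equivquasisimilarity} and Proposition \ref{ivaa}.'' Where you genuinely add something is in the passage from (iii) back to (i)/(ii): you correctly point out that Proposition \ref{ivaa} only returns $T$ quasi-affine to $S_2:=X_2\mhalf\tstar X_2\half$ and $\tstar$ quasi-affine to $S_1:=X_1\mhalf T X_1\half$, and that a \emph{common} target is needed for Lemma \ref{equivquasisimilarity}(ii) to apply; the paper's ``immediately'' glosses over this. Your repair via the classical theorem that quasi-similar normal operators are unitarily equivalent is valid (the intertwining relations between the bounded selfadjoint operators $S_1,S_2$ are genuine operator equations, so the theorem applies), but it imports an external result the paper never cites. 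A more elementary unification is already contained in the data of (iii): since $S_1\in\lplus$ one has $X_1\half S_1\inclu TX_1\half$ and $X_2\half T\inclu S_2X_2\half$, so composing gives $(X_2\half X_1\half)S_1\inclu S_2(X_2\half X_1\half)$ and hence $(X_2\half X_1)\tstar\inclu S_2(X_2\half X_1)$; as $X_2\half X_1$ is again a quasi-affinity, both $T$ and $\tstar$ are quasi-affine to the single operator $S_2$, which is all that Lemma \ref{equivquasisimilarity}(ii) requires. Your handling of the ``in this case'' clauses --- reading $T\inclu B_2A_2$ from Proposition \ref{ivaa} applied to $T$, obtaining $A_1B_1\inclu T$ by taking adjoints in $\tstar\inclu B'A'$ via $(B'A')\adj\supseteq A'(B')\adj$, and quoting \eqref{halm} for $T$ and for $\tstar$ to get $\overline{X_2T},\ \overline{X_1\tstar}\in\lplus$ --- is exactly what the cited results deliver.
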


\begin{proof}
The proof follows immediately from a combination of Lemma \ref{equivquasisimilarity} and Proposition \ref{ivaa}.
\end{proof}

%%%%%%%%%%%%%%%%%%%%%%%%%%%%%%%%%%%%%%%%%%%%%%%%%%%%%%
%                    Bibliothèque
%%%%%%%%%%%%%%%%%%%%%%%%%%%%%%%%%%%%%%%%%%%%%%%%%%%%%%

%\bibliographystyle{aabrv}

%
%\bibliographystyle{plain} % or amsplain, alpha, etc.

\bibliographystyle{siam}
%\bibliographystyle{plainnat}
%\bibliographystyle{halpha}
%\bibliographystyle{natbib}
%\bibliographystyle{abbrvnat}
%\bibliography{library}

\begin{thebibliography}{10}

\bibitem{arens}
{\sc R.~Arens}, {\em {Operational calculus of linear relations}}, Pacific J.
  Math., 11 (1961), pp.~9--23.

\bibitem{gustavo2013products}
{\sc M.~L. Arias, G.~Corach, and M.~C. Gonzalez}, {\em Products of projections
  and positive operators}, Linear Algebra and its Applications, 439 (2013),
  pp.~1730--1741.

\bibitem{bade1954unbounded}
{\sc W.~G. Bade}, {\em Unbounded spectral operators}, Pacific J. Math, 4
  (1954), pp.~373--392.

\bibitem{Hassi25sebestyen}
{\sc Y.~Barkaoui and S.~Hassi}, {\em An extension and refinement of theorems of
  {Douglas} and {Sebestyén} for unbounded operators}, submitted,  (2025).

\bibitem{paper3}
\leavevmode\vrule height 2pt depth -1.6pt width 23pt, {\em Local spectral
  properties of the product of nonnegative selfadjoint operators in unbounded
  settings}, submitted,  (2025).

\bibitem{behrndt2020boundary}
{\sc J.~Behrndt, S.~Hassi, and H.~De~Snoo}, {\em Boundary value problems, Weyl
  functions, and differential operators}, Springer Nature, 2020.

\bibitem{product2021}
{\sc M.~Contino, M.~A. Dritschel, A.~Maestripieri, and S.~Marcantognini}, {\em
  {Products of Positive Operators}}, Complex Anal. Oper. Theory, 15 (2021),
  pp.~1--36.

\bibitem{cross}
{\sc R.~Cross}, {\em {Multivalued linear operators}}, vol.~213 of Pure and
  Applied Mathematics, Marcel Dekker, 1998.

\bibitem{hassiderkach2009}
{\sc V.~Derkach, S.~Hassi, M.~Malamud, and H.~de~Snoo}, {\em Boundary relations
  and generalized resolvents of symmetric operators}, Russian Journal of
  Mathematical Physics, 16 (2009), pp.~17--60.

\bibitem{dunfordthree}
{\sc N.~Dunford and J.~T. Schwartz}, {\em {Linear operators. Part III. Spectral
  operators.}}, vol.~VI of Pure and applied mathematics, Wiley-Interscience,
  1971.

\bibitem{hardt}
{\sc V.~Hardt, A.~Konstantinov, and R.~Mennicken}, {\em On the spectrum of the
  product of closed operators}, Mathematische Nachrichten, 215 (2000),
  pp.~91--102.

\bibitem{hassi22014}
{\sc S.~Hassi and H.~S.~V. de~Snoo}, {\em {Factorization, majorization, and
  domination for linear relations}}, Ann. Univ. Sci. Budap. Rolando
  E{\"{o}}tv{\"{o}}s, Sect. Math., 58 (2015), pp.~55--72.

\bibitem{hassi2006form}
{\sc S.~Hassi, A.~Sandovici, H.~de~Snoo, and H.~Winkler}, {\em Form sums of
  nonnegative selfadjoint operators}, Acta Mathematica Hungarica, 111 (2006),
  pp.~81--105.

\bibitem{kato1980}
{\sc T.~KATO}, {\em Perturbation Theory for Linear Operators}, vol.~132,
  Springer, 1980.

\bibitem{scalar}
{\sc M.~V. Markin}, {\em {On certain spectral features inherent to scalar type
  spectral operators}}, Methods Funct. Anal. Topol., 23 (2017), pp.~60--65.

\bibitem{unboundedaffinity}
{\sc S.~{\^{O}}ta}, {\em {A quasi affine transform for an unbounded operator}},
  Stud. Math, 112 (1995), pp.~279--284.

\bibitem{Ota89}
{\sc S.~{\^{O}}ta and K.~Schm{\"{u}}dgen}, {\em {On some classes of unbounded
  operators}}, Integr. Equations Oper. Theory, 12 (1989), pp.~211--226.

\bibitem{popovici2013factorizations}
{\sc D.~Popovici and Z.~Sebesty{\'e}n}, {\em Factorizations of linear
  relations}, Advances in Mathematics, 233 (2013), pp.~40--55.

\bibitem{sandovici2025}
{\sc A.~Sandovici}, {\em Operator solutions of the multi--valued operator
  equation {A} = {BXC}}, Bulletin of the Malaysian Mathematical Sciences
  Society, 48 (2025), p.~54.

\bibitem{Sandovi2013}
{\sc A.~Sandovici and Z.~Sebesty{\'{e}}n}, {\em {On operator factorization of
  linear relations}}, Positivity, 17 (2013), pp.~1115--1122.

\bibitem{sebestyen1983restrictions}
{\sc Z.~Sebesty{\'e}n}, {\em Restrictions of positive operators}, Acta Sci.
  Math.(Szeged), 46 (1983), pp.~299--301.

\bibitem{Sebestyen2017}
{\sc Z.~Sebesty{\'{e}}n and Z.~Tarcsay}, {\em {On the square root of a positive
  selfadjoint operator}}, Period. Math. Hungarica, 75 (2017), pp.~268--272.

\bibitem{wouk1966note}
{\sc A.~Wouk}, {\em A note on square roots of positive operators}, SIAM Review,
  8 (1966), pp.~100--102.

\bibitem{positiveprodmatrices}
{\sc P.~Y. Wu}, {\em {Products of positive semideﬁnite matrices}}, Linear
  Algebra Appl., 111 (1988), pp.~53--61.

\end{thebibliography}

\end{document}